\theoremstyle{plain}
\newtheorem{theorem}{Theorem}[section]
\newtheorem{corollary}[theorem]{Corollary}
\newtheorem{lemma}[theorem]{Lemma}
\theoremstyle{definition}
\theoremstyle{remark}
\newtheorem{remark}{Remark}
\newcommand{\eremk}{\hbox{}\hfill\rule{0.8ex}{0.8ex}}
\newcommand{\Norm}[1]{{\left\|{#1} \right\|}}
\newcommand{\SemiNorm}[1]{{\left|{#1} \right|}}
\newcommand{\p}{p}
\newcommand{\h}{h}
\newcommand{\hbf}{\mathbf h}
\newcommand{\xbf}{\mathbf x}
\newcommand{\E}{K}
\newcommand{\Ep}{\E^+}
\newcommand{\Em}{\E^-}
\newcommand{\vp}{v^+}
\newcommand{\vm}{v^-}
\newcommand{\hE}{\h_\E}
\newcommand{\F}{F}
\newcommand{\hF}{\h_\F}
\newcommand{\e}{e}
\newcommand{\Ccal}{\mathcal C}
\newcommand{\taun}{\mathcal T_n}
\newcommand{\Fcaln}{\mathcal F_n}
\newcommand{\FcalnI}{\Fcaln^I}
\newcommand{\FcalnB}{\Fcaln^B}
\newcommand{\Vcaln}{\mathcal V_n}
\newcommand{\Nbb}{\mathbb N}
\newcommand{\Pbb}{\mathbb P}
\newcommand{\Qbb}{\mathbb Q}
\newcommand{\Rbb}{\mathbb R}
\newcommand{\qp}{q_\p}
\newcommand{\f}{f}
\newcommand{\nbf}{\mathbf n}
\newcommand{\nbfOmega}{\nbf_{\Omega}}
\newcommand{\cmesh}{c_{\text{mesh}}}
\newcommand{\HotaunOmega}{H^1(\taun,\Omega)}
\newcommand{\Vn}{V_n}
\newcommand{\un}{u_n}
\let\div\relax
\DeclareMathOperator{\div}{div}
\newcommand{\vn}{v_n}
\newcommand{\vbfn}{\mathbf v_n}
\newcommand{\Lcal}{\mathcal L}
\newcommand{\zerobf}{\mathbf 0}
\newcommand{\ubf}{\mathbf u}
\newcommand{\ubftilde}{\widetilde{\mathbf u}}
\newcommand{\vbftilde}{\widetilde{\mathbf v}}
\newcommand{\fbf}{\mathbf f}
\newcommand{\vbf}{\mathbf v}
\newcommand{\Bn}{B_n}
\newcommand{\Dnt}{D_n^2}
\newcommand{\sigmabold}{\boldsymbol \sigma}
\newcommand{\taubold}{\boldsymbol \tau}
\newcommand{\csigmabold}{c_{\sigmabold}}
\newcommand{\ctaubold}{c_{\taubold}}
\newcommand{\cS}{c_S}
\newcommand{\cC}{c_C}
\DeclareMathOperator{\dG}{dG}
\newcommand{\Pcalpo}{\mathcal P_\p^1}
\newcommand{\Pcalpx}{\mathcal P_\p^x}
\newcommand{\Pcalpy}{\mathcal P_\p^y}
\newcommand{\Pcalpz}{\mathcal P_\p^z}
\newcommand{\Pcalp}{\mathcal P_\p}
\newcommand{\nbfE}{\mathbf n_\E}
\newcommand{\Pibfzp}{\boldsymbol \Pi^0_\p}
\newcommand{\Pizp}{\Pi^0_\p}
\newcommand{\Pizpmo}{\Pi^0_{\p-1}}
\newcommand{\Piop}{\Pi^1_\p}
\newcommand{\Pizxp}{\Pi^{0,x}_\p}
\newcommand{\Pizyp}{\Pi^{0,y}_\p}
\newcommand{\Pizzp}{\Pi^{0,z}_\p}
\newcommand{\Ibs}{\mathcal{I}_\p^{\rm BS}}
\newcommand{\Ihat}{\widehat I}
\newcommand{\Qhat}{\widehat Q}
\newcommand{\cod}{c_{1D}}
\newcommand{\up}{u_\p}
\newcommand{\partialx}{\partial_x}
\newcommand{\partialy}{\partial_y}
\newcommand{\partialz}{\partial_z}
\newcommand{\partialxy}{\partial_{xy}}
\newcommand{\Zbf}{\mathbf Z}
\title{$\h\p$-optimal interior penalty discontinuous Galerkin methods for the biharmonic problem}
\author{Z. Dong\thanks{Inria, 2 rue Simone Iff, 75589 Paris, France and CERMICS, Ecole des Ponts, 77455 Marne-la-Vall\'{e}e 2, France, {\tt zhaonan.dong@inria.fr}} ,
L. Mascotto\thanks{Dipartimento di Matematica e Applicazioni, Universit\`a di Milano Bicocca, 20125 Milan, Italy, {\tt lorenzo.mascotto@unimib.it}}
\thanks{Fakult\"at f\"ur Mathematik, Universit\"at Wien, 1090 Vienna, Austria, {\tt lorenzo.mascotto@univie.ac.at}}
\thanks{IMATI-CNR, 27100, Pavia, Italy}}
\date{}
\begin{document}

\maketitle

\begin{abstract}
\noindent We prove $\h\p$-optimal error estimates for interior penalty discontinuous Galerkin methods (IPDG) for the biharmonic problem
with homogeneous essential boundary conditions.
We consider tensor product-type meshes in two and three dimensions,
and triangular meshes in two dimensions.
An essential ingredient in the analysis is the construction
of a global $H^2$ piecewise polynomial approximants
with $\h\p$-optimal approximation properties over the given meshes.
The $\h\p$-optimality is also discussed
for $\mathcal C^0$-IPDG in two and three dimensions,
and the stream formulation of the Stokes problem in two dimensions.
Numerical experiments validate the theoretical predictions
and reveal that $\p$-suboptimality occurs
in presence of singular essential boundary conditions.

\medskip\noindent
\textbf{AMS subject classification}:
65N12; 65N30; 65N50.

\medskip\noindent
\textbf{Keywords}: discontinuous Galerkin;
optimal convergence;
a priori error estimation;
$\p$-version.
\end{abstract}

\section{Introduction} \label{section:introduction}

The numerical approximation of solutions to the biharmonic problem is a challenging task tracing back to the 60ties~\cite{Argyris-Fried-Scharpf:1968};
see also the later work~\cite{Douglas-Dupont-Percell-Scott:1979}.
The main difficulty in designing standard Galerkin methods for this problem resides in the construction of global $H^2$ conforming polynomial spaces
over the given meshes.
A possible way to circumvent this issue is based on using nonconforming finite element spaces.
The Morley element is the first and
one of the most popular among them;
see, e.g., \cite{Morley:1968, Ming-Xu:2006}.

Another option to avoid $H^2$ conformity is to use
$H^1$ conforming piecewise polynomial spaces.
This approach goes under the name of $\Ccal^0$-interior penalty discontinuous Galerkin method ($\Ccal^0$-IPDG)
and has been a very active area of research over the last~$20$ years;
see, e.g.,
\cite{Beirao-Niiranen-Stenberg:2007, Brenner-Gudi-Sung:2010,
Brenner-Sung:2005, Engel-Garikipati-Hughes-Larson-Mazzei-Taylor:2002}.
The continuity constraint can we be further weakened
leading to the use of fully discontinuous test and trial functions.
This approach goes under the name of interior penalty discontinuous Galerkin method (IPDG)
and was first investigated
for the biharmonic problem
by Baker~\cite{Baker:1977}
in the late 70ties;
amongst others, we recall the following contributions,
including the $\p$- and~$\h\p$-versions of the scheme:
\cite{Georgoulis-Houston:2009, Dong:2019, Dong-Mascotto-Sutton:2021, Gudi-Nataraj-Pani:2008, Mozolevski-Suli:2003,Suli-Mozolevski-Bosing:2007, Suli-Mozolevski:2007}.

Standard results in the theory of the IPDG for the biharmonic problem
state that the convergence is optimal with respect to the mesh size,
and one order and a half suboptimal with respect to the polynomial degree.
The reason for this is the use of polynomial inverse estimates
while handling the penalization of the bilinear form.
A milder suboptimality of half an order occurs also in the approximation
of elliptic partial differential equations of second order
but it can be removed under two assumptions:
(\emph{i}) the essential boundary conditions are homogeneous;
(\emph{ii}) the underlying mesh allows for the existence of a global~$H^1$ piecewise polynomial
with $\h\p$-optimal approximation properties.
This was first observed in~\cite{Perugia-Schotzau:2002}
using the error analysis in the $L^2$ norm
for the local discontinuous Galerkin method
and later analyzed in~\cite{Stamm-Wihler:2010}
using the error analysis in the energy norm for the IPDG.

The aim of this paper is to prove that an analogous result holds true using the IPDG for the biharmonic problem
with homogeneous essential boundary conditions.
We shall mainly focus on the Hessian-Hessian formulation in~\cite{Brenner-Sung:2005},
and show that similar $\h\p$-optimal results are valid also for
the $\mathcal{C}^0$-IPDG for the biharmonic problem in two and three dimensions,
and the stream formulation of the Stokes problem in two dimensions.
More precisely,
the main result of this contribution is that the optimality with respect to the polynomial degree
can be recovered using conforming tensor product-type meshes
in two and three dimensions,
and triangular meshes in two dimensions.

The analysis hinges upon showing the existence
of global $H^2$ piecewise polynomial $\h\p$-optimal approximants
over the given meshes.
On tensor product-type meshes,
we shall exhibit explicit $\h\p$-optimal bounds in the anisotropic Sobolev scale
that are fully explicit with respect to the Sobolev regularity index.
We are not aware of existing results in the literature covering the three dimensional case of this;
so, this topic is interesting \emph{per se}.
On triangular meshes, such an $H^2$ approximant
can be constructed following~\cite{Suri:1990}
but the error estimates depend on the Sobolev regularity index implicitly;
we are not aware of a counterpart of this approach on tetrahedral meshes.

If essential boundary conditions are nonhomogeneous,
for elliptic partial differential equations of second order,
it is known that the method is $\p$-suboptimal if essential boundary conditions are singular;
see~\cite{Georgoulis-Hall-Melenk:2010}.
A similar result might be proven for the biharmonic problem.
Showing this goes beyond the scopes of this work,
but we shall provide numerical evidence that indeed $\p$-suboptimality occurs.

\paragraph*{Structure of the paper.}
In the remainder of this section,
we present the notation we shall employ throughout the paper
and recall the Hessian-Hessian formulation of the biharmonic problem.
We design the corresponding IPDG scheme in Section~\ref{section:IPDG}.
Section~\ref{section:hp-polynomial} is devoted to recalling and proving several $\h\p$-polynomial approximation and stability properties on tensor product-type meshes,
which are instrumental to derive $\h\p$-optimal convergence rate of the IPDG in Section~\ref{section:convergence};
here, we also discuss $\h\p$-optimality on triangular meshes.
Variants of the above scheme are discussed in Section~\ref{section:variants},
namely we shall comment on $\mathcal C^0$-IPDG in two and three dimensions,
and the stream formulation of the Stokes problem in two dimensions.
Numerical experiments are presented in Section~\ref{section:numerical-experiments}
and conclusions are drawn in Section~\ref{section:conclusions}.

\paragraph*{Notation.}
Let~$D$ be a domain in~$\Rbb^d$, $d=1,2,3$.
We denote the Sobolev space of order~$s \in \Nbb$ by~$H^s(D)$.
Fractional order and negative order Sobolev spaces are defined using interpolation theory and duality, respectively.
We denote the Lebesgue space of square integrable functions over~$D$ by~$L^2(D)$.
We endow the above spaces with the inner product~$(\cdot,\cdot)_{s,D}$,
which induces the norm and seminorm~$\Norm{\cdot}_{s,D}$ and~$\SemiNorm{\cdot}_{s,D}$.
Further, given~$\ell \in \Rbb^+$ and~$k \in \Nbb$,
we introduce the anisotropic Sobolev space:
\begin{equation} \label{mixed-Sobolev-space}
\begin{split}
H^{\ell,k}(D)
:= \Big\{ v\in H^{\ell}(D) \Big| 
        & \Norm{\partial^{\alpha_1}_{x_1}
        \dots \partial^{\alpha_d}_{x_d} u }_{0,D} < \infty
        \quad \forall \boldsymbol \alpha
        = (\alpha_1, \dots,\alpha_d) \in \Nbb^d \text{ s. th.}\\
        & \vert \boldsymbol \alpha \vert = k,\ 
            \alpha_j\le 2 \ \forall j \in \{1,\dots,d\}
            \setminus \{J\}
            \text{ for one }J \in \{ 1,\dots, d\} \Big\}.
\end{split}
\end{equation}
The space~$H^{\ell,k}(D)$ contains $H^\ell$ functions
with square integrable mixed derivatives of order~$k$,
where such mixed derivatives have fewer
than three partial derivatives along all directions but one.

We introduce several weak differential operators:
$\nabla$ as the gradient;
$\Delta=\nabla\cdot\nabla$ as the Laplacian;
$\Delta^2$ as the bilaplacian;
$D^2=\nabla \nabla^T$ as the Hessian.

Given~$\p \in \Nbb$,
let~$\Pbb_\p(D)$ and~$\Qbb_\p(D)$ represent the spaces of polynomials of degree at most~$\p$ over a domain~$D$
and degree at most~$\p$ in each variable over a~$d$-dimensional cube~$D$, respectively.

We denote the scalar product between two vectors~$\ubf$ and~$\vbf$
in~$\Rbb^d$ by~$\ubf \cdot \vbf$.
Further, we denote the entrywise product between two tensors~$\ubf$ and~$\vbf$ in~$\Rbb^{d\times d}$ by~$\ubf : \vbf$.

Given two positive quantities~$a$ and~$b$,
we use the short-hand notation ``$a\lesssim b$''
if there exists a positive constant~$c$ independent of the discretization parameters
such that ``$a\le c \ b$''.
Moreover, we write ``$a \approx b$'' if and only if ``$a\lesssim b $''
and ``$b \lesssim a$''.

\paragraph*{The continuous problem.}
Given~$\Omega\subset\Rbb^d$, $d=2,3$, a bounded polygonal/polyhedral domain
with unit outward normal vector~$\nbfOmega$
and~$\f \in L^2(\Omega)$,
we consider the following biharmonic problem:
find~$u:\Omega \to \Rbb$ such that
\begin{equation} \label{strong-formulation}
\begin{cases}
\Delta^2 u = \f & \text{in } \Omega, \\
u=0 , \quad \nbfOmega \cdot \nabla u =0 & \text{on }\partial\Omega.
\end{cases}
\end{equation}
Introduce
\[
V:=H^2_0(\Omega),
\qquad\qquad
B(u,v)=\int_{\Omega} D^2 u : D^2 v
\quad \forall u,v\in V.
\]
A weak formulation of problem~\eqref{strong-formulation} reads
\begin{equation} \label{weak-formulation}
\begin{cases}
\text{find } u \in V \text{ such that}\\
B(u,v) = (\f,v)_{0,\Omega} \qquad \forall v\in V.
\end{cases}
\end{equation}
Problem~\eqref{weak-formulation} is well posed;
see, e.g., \cite[Section~5.9]{Brenner-Scott:2008}.

\begin{remark} \label{remark:nh-boundary-conditions}
We only consider homogeneous boundary conditions
as they allow for fewer cumbersome technicalities
when deriving $\h\p$-optimal error estimates with the IPDG.
However, the $\h\p$-optimal results that we shall discuss in this paper
can be extended to the case of piecewise smooth boundary conditions.
Moreover, handling natural boundary conditions does not hide additional technicalities
nor impact the optimal convergence rate in~$p$;
see, e.g., \cite{Beirao-Niiranen-Stenberg:2010}.
\eremk
\end{remark}

\section{Interior penalty discontinuous Galerkin methods for the biharmonic problem} \label{section:IPDG}
In this section, we introduce the IPDG for problem~\eqref{weak-formulation}.
More precisely, we introduce regular tensor product-type and triangular meshes in Section~\ref{subsection:meshes};
Section~\ref{subsection:method} is devoted to design the method;
in Section~\ref{subsection:properties-lifiting},
we discuss some properties of the bilinear forms involved in the method.

\subsection{Meshes} \label{subsection:meshes}
We consider sequences~$\{ \taun \}_{n}$ of decompositions of~$\Omega$ into parallelograms/parallelepipeds in two and three dimensions,
and triangular meshes in two dimensions.
Henceforth, we shall refer to the case of parallelograms/parallelepipeds meshes
as tensor product-type meshes.
We denote the set of ($d-1$)-dimensional faces of each~$\taun$ by~$\Fcaln$.
We further distinguish interior and boundary faces
introducing~$\FcalnI$ and~$\FcalnB$, respectively.
The set of vertices of~$\taun$ is~$\Vcaln$.

With each element~$\E \in \taun$, we associate its diameter~$\hE$
and collect all the diameters in~$\hbf:\Omega\to\Rbb^+$
defined as
\[
\hbf:=
\begin{cases}
\hE  & \text{if } \xbf \in \E \in \taun,\\
\hF  & \text{if } \xbf \in \F \in \Fcaln.\\
\end{cases}
\]
We denote the unit normal vector to any interior face~$F$
with respect to the global orientation of~$\F$ by~$\nbf$.
We always fix a outward pointing vector for boundary faces.

Throughout, we assume quasi-uniformity of the mesh~$\taun$, i.e.,
there exists~$\cmesh>0$ such that
\[
\cmesh^{-1} \h_{\E_1}
\le \h_{\E_2}
\le \cmesh \h_{\E_1}
\qquad \forall \E_1,\ \E_2 \in \taun.
\]
In what follows, we consider a uniform distribution of the polynomial degree~$\p\in\Nbb$ over~$\taun$.
Variable polynomial degrees can be used~\cite{Schwab:1997, Dong-Mascotto-Sutton:2021}
but are not the main focus of this work.

Define the scalar broken Sobolev space
\[
\begin{split}
& \HotaunOmega:=
\{ v \in L^2(\Omega) \mid v_{|\E} \in H^1(\E) \},\\
& H^{\ell,k}(\taun):=
\{ v \in L^2(\Omega) \mid v_{|\E} \in H^{\ell,k}(\E) \}
\quad \forall \ell \in \Rbb^+,\ k\in \Nbb.
\end{split}
\]
We introduce the jump~$\llbracket \cdot \rrbracket : L^2(\F) \to \Rbb$
and the average~$\{\cdot\} : L^2(\F) \to \Rbb$ operators
over a face~$\F \in \Fcaln$ as follows:
if the face~$\F \in \FcalnI$ is an interior face shared by the elements~$\Ep$ and~$\Em$,
given~$v\in\HotaunOmega$ with restrictions over the elements~$\Ep$ and~$\Em$ denoted by~$\vp$ and~$\vm$, respectively,
\[
\llbracket v \rrbracket_\F(\xbf)
=\llbracket v \rrbracket(\xbf)
:= \vp(\xbf) - \vm(\xbf),
\qquad
\{ v \}_\F(\xbf)
= \{ v \}(\xbf)
:= \frac12 (\vp(\xbf)+\vm(\xbf))
\qquad \forall \xbf \in \F.
\]
On boundary faces~$\F\in\FcalnB$, we set~$\llbracket v \rrbracket = \{ v \} =v$.

The definition of the two operators above can be extended to vector and tensor functions.
We avoid using different notation, for the presentation's sake.

For any differential operator introduced in Section~\ref{section:introduction},
we denote its elementwise version associated with~$\taun$
by adding a subscript~$n$;
for instance, $\nabla_n$ is the piecewise gradient over~$\taun$.

\subsection{The method} \label{subsection:method}
For each mesh~$\taun$ and polynomial degree~$\p$,
we introduce the space
\[
\Vn:=
\{ \vn \in L^2(\Omega) \mid
\vn{}_{|\E} \in \Pbb_\p(\E) \; \forall \E \in \taun  \}.
\]
We endow~$\Vn$ with the broken Sobolev norm and seminorm of order~$s>0$
\[
\Norm{\cdot}_{s,\taun}^2
:= \sum_{\E \in \taun} \Norm{\cdot}_{s,\E}^2,
\qquad
\SemiNorm{\cdot}_{s,\taun}^2
:= \sum_{\E \in \taun} \SemiNorm{\cdot}_{s,\E}^2,
\]
and introduce the lifting operator~$\Lcal:\Vn\oplus V \to [\Vn]^{d\times d}$, $d=2,3$, as
\begin{equation} \label{lifting-operator}
\int_{\Omega} \Lcal(\un) : \vbfn
:= \int_{\Fcaln} \{ \nbf \cdot (\nabla \cdot \vbfn) \} \llbracket \un \rrbracket
 - \int_{\Fcaln} \{ (\vbfn) \nbf \} \cdot \llbracket \nabla \un \rrbracket
\qquad  \forall \vbfn \in [\Vn]^{d\times d}.
\end{equation}
Let~$\sigmabold:\Fcaln \to \Rbb^+$ and~$\taubold:\Fcaln\to\Rbb^+$
two piecewise constant stabilization parameters, which we shall fix in what follows; see Section~\ref{subsection:properties-lifiting} below.
Given~$\Dnt$ the piecewise Hessian over~$\taun$,
we define the bilinear form~$\Bn:\Vn\times\Vn\to \Rbb$ as
\begin{equation} \label{bilinear-form}
\begin{split}
\Bn(\un,\vn)
& := \int_\Omega \Dnt \un : \Dnt \vn
  + \int_{\Omega} \left( \Lcal(\un):\Dnt\vn + \Lcal(\vn) :\Dnt \un    \right)\\
& \quad  + \int_{\Fcaln}
    \left( \sigmabold \llbracket \un \rrbracket  \llbracket \vn \rrbracket
    + \taubold \llbracket \nabla\un \rrbracket \cdot \llbracket \nabla\vn \rrbracket \right)
\qquad \forall \un,\vn\in\Vn.
\end{split}
\end{equation}
We consider the following IPDG for the approximation of solutions to~\eqref{weak-formulation}; see~\cite{Dong-Mascotto-Sutton:2021}:
\begin{equation} \label{dG}
\begin{cases}
\text{find } \un \in \Vn \text{ such that}\\
\Bn(\un,\vn) = (f,\vn) \qquad \forall \vn \in \Vn.
\end{cases}
\end{equation}
Other dG formulations might be used~\cite{Brenner-Sung:2005,Georgoulis-Houston:2009};
yet, we stick to~\eqref{dG} as it is the most similar to that of~\cite{Stamm-Wihler:2010}.

Eventually, we define the natural norm associated with the bilinear form~$\Bn(\cdot,\cdot)$:
for all~$v \in \Vn \oplus V$,
\begin{equation} \label{dg-norm}
\begin{split}
\Norm{v}_{\dG}^2
& := \Norm{\Dnt v}_{0,\Omega}^2
   + \sum_{\F \in \Fcaln} \Norm{\taubold^{\frac12}\llbracket \nabla v \rrbracket }_{0,\F}^2
   + \sum_{\F \in \Fcaln} \Norm{\sigmabold^{\frac12} \llbracket v \rrbracket}_{0,\F}^2\\
& =: \Norm{\Dnt v}_{0,\Omega}^2
   + \Norm{\taubold^{\frac12}\llbracket \nabla v \rrbracket }_{0,\Fcaln}^2
   + \Norm{\sigmabold^{\frac12} \llbracket v \rrbracket}_{0,\Fcaln}^2,
\end{split}
\end{equation}
where
\[
(u,v)_{0,\Fcaln}
:= \sum_{\F \in \Fcaln} (u , v )_{0,\F},
\qquad
\Norm{u}_{0,\Fcaln}^2 := (u,u)_{0,\Fcaln}
\qquad\qquad \forall u,v \in \Vn \oplus V.
\]

\subsection{Properties of the lifting operator, bilinear form, and right-hand side} \label{subsection:properties-lifiting}
Following~\cite[Lemma~5.1]{Georgoulis-Houston:2009},
we have the following continuity result for the lifting operator~$\Lcal(\cdot)$ in~\eqref{lifting-operator}.
\begin{lemma} \label{lemma:lifting-properties}
There exists a positive constant~$\cS$ independent of~$\hbf$ and~$\p$
such that
\[
\Norm{\Lcal(\vn)}_{0,\Omega}
\le \cS
\left( \Norm{\left(\frac{\p^6}{\hbf^3}\right)^\frac12 \llbracket \vn \rrbracket}_{0,\Fcaln}
+ \Norm{\left(\frac{\p^2}{\hbf}\right)^\frac12 \llbracket \nabla \vn \rrbracket}_{0,\Fcaln} \right)
\qquad \forall \vn \in \Vn.
\]
\end{lemma}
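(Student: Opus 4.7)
The plan is to test the defining identity of the lifting with $\mathbf v_n = \Lcal(v_n)$ itself. Substituting into~\eqref{lifting-operator} yields
\[
\Norm{\Lcal(v_n)}_{0,\Omega}^2
= \int_{\Fcaln} \{ \nbf \cdot (\nabla \cdot \Lcal(v_n)) \} \llbracket v_n \rrbracket
 - \int_{\Fcaln} \{ \Lcal(v_n)\, \nbf \} \cdot \llbracket \nabla v_n \rrbracket,
\]
so it suffices to bound the two face integrals on the right-hand side by $\Norm{\Lcal(v_n)}_{0,\Omega}$ times the two weighted jump seminorms appearing in the claim.

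Next, I would apply weighted Cauchy--Schwarz face-by-face, pairing the jump $\llbracket v_n\rrbracket$ with the weight $(\p^6/\hbf^3)^{1/2}$ and the jump $\llbracket \nabla v_n\rrbracket$ with $(\p^2/\hbf)^{1/2}$, so that the trial-dependent averages carry the inverse weights. Since $\Lcal(v_n)$ is elementwise polynomial of degree at most~$\p$, the standard $\h\p$-polynomial inverse trace and inverse estimates give
\[
\Norm{q}_{0,\partial \E}
\lesssim \left(\tfrac{\p^2}{\hE}\right)^{\!1/2} \Norm{q}_{0,\E},
\qquad
\Norm{\nabla q}_{0,\E}
\lesssim \tfrac{\p^2}{\hE} \Norm{q}_{0,\E}
\qquad \forall q \in \Pbb_\p(\E).
\]
Composing them, $\Norm{\nabla \cdot \Lcal(v_n)}_{0,\partial \E} \lesssim (\p^6/\hE^3)^{1/2}\Norm{\Lcal(v_n)}_{0,\E}$, while applying the trace inequality alone yields $\Norm{\Lcal(v_n)\nbf}_{0,\partial \E} \lesssim (\p^2/\hE)^{1/2}\Norm{\Lcal(v_n)}_{0,\E}$. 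Summing over the two elements sharing each interior face (and a single element on boundary faces) bounds the inverse-weighted $L^2(\Fcaln)$ norms of the two averages by $\Norm{\Lcal(v_n)}_{0,\Omega}$.

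Combining these estimates with Cauchy--Schwarz, the identity above becomes
\[
\Norm{\Lcal(v_n)}_{0,\Omega}^2
\lesssim \Norm{\Lcal(v_n)}_{0,\Omega}
\left(
\Norm{(\p^6/\hbf^3)^{1/2} \llbracket v_n \rrbracket}_{0,\Fcaln}
+ \Norm{(\p^2/\hbf)^{1/2} \llbracket \nabla v_n \rrbracket}_{0,\Fcaln}
\right),
\]
and dividing by $\Norm{\Lcal(v_n)}_{0,\Omega}$ (or invoking the trivial bound when it vanishes) delivers the claim. The main obstacle is ensuring the sharp $\p^2/\hE$ dependence in both the polynomial inverse trace estimate and the inverse gradient estimate: these are the places where the exponents $\p^6/\hbf^3$ and $\p^2/\hbf$ in the stated bound are produced, and any looser power would spoil the scaling used to calibrate the penalties~$\sigmabold$ and~$\taubold$ later in Section~\ref{subsection:properties-lifiting}. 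The rest is bookkeeping over interior/boundary faces and quasi-uniformity, which lets us treat $\hbf$ as constant on each face up to the factor~$\cmesh$.
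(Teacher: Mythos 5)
Your argument is correct and is precisely the standard proof of this continuity bound: the paper itself does not reprove the lemma but cites \cite[Lemma~5.1]{Georgoulis-Houston:2009}, whose argument is exactly your combination of testing~\eqref{lifting-operator} with $\Lcal(\vn)$, weighted Cauchy--Schwarz, and the $\h\p$-inverse trace and inverse gradient estimates producing the $\p^6/\hbf^3$ and $\p^2/\hbf$ weights. Nothing further is needed.
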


Consequences of Lemma~$\ref{lemma:lifting-properties}$ are the coercivity and  continuity of the dG bilinear form~$\Bn(\cdot,\cdot)$
under suitable choices of the dG parameters~$\sigmabold$ and~$\taubold$,
namely, given~$\csigmabold$, $\ctaubold>2\cS+\frac12$,
\begin{equation} \label{dg-parameters}
\sigmabold:= \csigmabold \frac{p^6}{\hbf^3},
\qquad
\taubold:=\ctaubold \frac{\p^2}{\hbf}.
\end{equation}
See, e.g., \cite[Lemma~5.2]{Georgoulis-Houston:2009}.

\begin{lemma} \label{lemma:properties-bf}
The bilinear form~$\Bn(\cdot,\cdot)$ is coercive and continuous:
\begin{equation} \label{continuity&coercivity:bf}
\Bn(v,v)\ge \frac12 \Norm{v}_{\dG}^2,
\qquad
\Bn(u,v) \le 2 \Norm{u}_{\dG} \Norm{v}_{\dG}
\qquad \qquad \forall u,\ v \in \Vn\oplus V.
\end{equation}
\end{lemma}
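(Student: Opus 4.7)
The plan is to reduce both statements to Lemma~\ref{lemma:lifting-properties} by standard Cauchy--Schwarz--Young bookkeeping, exploiting the specific form of the penalty parameters in~\eqref{dg-parameters}. First, I would set~$u=v$ in the definition~\eqref{bilinear-form} of~$\Bn(\cdot,\cdot)$, noticing that the only term whose sign is not manifestly nonnegative is the symmetric lifting contribution~$2\int_\Omega \Lcal(v):\Dnt v$. Applying Cauchy--Schwarz in the Frobenius inner product and then Young's inequality with a carefully tuned parameter yields
\[
2\left|\int_\Omega \Lcal(v):\Dnt v\right|
\le \frac{1}{2}\Norm{\Dnt v}_{0,\Omega}^2 + 2\Norm{\Lcal(v)}_{0,\Omega}^2.
\]
The half of~$\Norm{\Dnt v}_{0,\Omega}^2$ is absorbed by the first term of~$\Bn(v,v)$, leaving me to control~$2\Norm{\Lcal(v)}_{0,\Omega}^2$.

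For the latter, I would invoke Lemma~\ref{lemma:lifting-properties} and then rewrite the right-hand side in terms of~$\sigmabold$ and~$\taubold$ using~\eqref{dg-parameters}: since $\sigmabold = \csigmabold\,\p^6/\hbf^3$ and $\taubold = \ctaubold\,\p^2/\hbf$, we have
\[
\Norm{\Lcal(v)}_{0,\Omega}^2
\le 2\cS^2\Big(\csigmabold^{-1}\Norm{\sigmabold^{\frac12}\llbracket v\rrbracket}_{0,\Fcaln}^2
+\ctaubold^{-1}\Norm{\taubold^{\frac12}\llbracket\nabla v\rrbracket}_{0,\Fcaln}^2\Big).
\]
Combining with what precedes gives
\[
\Bn(v,v)\ge \frac12\Norm{\Dnt v}_{0,\Omega}^2
+ (1-4\cS^2\csigmabold^{-1})\Norm{\sigmabold^{\frac12}\llbracket v\rrbracket}_{0,\Fcaln}^2
+ (1-4\cS^2\ctaubold^{-1})\Norm{\taubold^{\frac12}\llbracket\nabla v\rrbracket}_{0,\Fcaln}^2,
\]
so the coercivity constant~$1/2$ in~\eqref{continuity&coercivity:bf} follows once the threshold in the hypothesis on~$\csigmabold,\ctaubold$ is met (my Young's constant should be retuned to match the precise threshold $2\cS+\frac12$, but the structure of the argument is unchanged).

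For continuity, I would apply Cauchy--Schwarz term by term in~\eqref{bilinear-form}: the four "diagonal" contributions are bounded directly by products of the DG seminorms appearing in~\eqref{dg-norm}, while the two lifting cross-terms~$\int_\Omega\Lcal(u):\Dnt v$ and $\int_\Omega\Lcal(v):\Dnt u$ are controlled by combining Cauchy--Schwarz with Lemma~\ref{lemma:lifting-properties} applied separately to~$\Lcal(u)$ and~$\Lcal(v)$. Regrouping via a final discrete Cauchy--Schwarz over the three summands of~\eqref{dg-norm} gives~$|\Bn(u,v)|\le C\Norm{u}_{\dG}\Norm{v}_{\dG}$, and the constant~$2$ is obtained by inserting the sharp Young's inequality balancing the same way as in the coercivity step.

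The extension from~$\Vn$ to~$\Vn\oplus V$ is automatic: for~$v\in V=H^2_0(\Omega)$ the jumps~$\llbracket v\rrbracket$ and~$\llbracket\nabla v\rrbracket$ vanish on interior faces by~$H^2$-regularity and on boundary faces by the homogeneous essential conditions, whence $\Lcal(v)=0$ by~\eqref{lifting-operator}, and all penalty and lifting contributions drop. The main (very minor) obstacle is thus purely one of constant-chasing: I have to choose the Young parameter so that the residual penalty weights~$1-\alpha\cS^2/\csigmabold$ and~$1-\alpha\cS^2/\ctaubold$ are bounded below by~$1/2$ precisely when~$\csigmabold,\ctaubold>2\cS+\frac12$, matching the threshold stated just before Lemma~\ref{lemma:properties-bf}.
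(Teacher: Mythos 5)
Your argument is exactly the one the paper relies on: the paper itself offers no proof of Lemma~\ref{lemma:properties-bf}, merely citing \cite[Lemma~5.2]{Georgoulis-Houston:2009}, and your Cauchy--Schwarz/Young treatment of the lifting cross-term combined with Lemma~\ref{lemma:lifting-properties} and the choice~\eqref{dg-parameters} is precisely that standard route, so the proposal matches the intended proof in both structure and substance. The one caveat worth recording is that your final constant-chasing cannot be closed by ``retuning'' alone: Young's inequality applied to $2\Norm{\Lcal(v)}_{0,\Omega}\Norm{\Dnt v}_{0,\Omega}$ together with the squared form of Lemma~\ref{lemma:lifting-properties} inevitably produces a threshold of order $\cS^2$ (namely $\csigmabold,\ctaubold\ge 8\cS^2$) rather than the linear threshold $2\cS+\tfrac12$ stated before the lemma, a discrepancy that traces back to the normalization of the lifting estimate in the cited reference rather than to any flaw in your reasoning.
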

We also have the continuity of the right-hand side in~\eqref{dG}.
\begin{lemma} \label{lemma:rhs}
There exists a positive constant~$\cC$ independent of~$\hbf$ and~$\p$
such that
\[
(\f,v)_{0,\Omega}
\le \cC \Norm{\f}_{0,\Omega}
    \Norm{v}_{\dG}
\qquad \forall v\in \Vn \oplus V.
\]
\end{lemma}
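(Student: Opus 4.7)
The plan is to bound the right-hand side by Cauchy--Schwarz and then to control the $L^2(\Omega)$-norm of the test function by the dG-norm via a broken Poincar\'e--Friedrichs-type inequality. Concretely, by Cauchy--Schwarz,
\[
(\f,v)_{0,\Omega} \le \Norm{\f}_{0,\Omega}\Norm{v}_{0,\Omega},
\]
so it suffices to exhibit a constant $c>0$, independent of $\hbf$ and $\p$, such that
\begin{equation} \label{eq:poincare-goal}
\Norm{v}_{0,\Omega} \le c \Norm{v}_{\dG} \qquad \forall v\in \Vn\oplus V.
\end{equation}
Given \eqref{eq:poincare-goal}, the lemma follows with $\cC=c$.

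For $v\in V=H^2_0(\Omega)$, inequality \eqref{eq:poincare-goal} is just the standard Poincar\'e--Friedrichs inequality applied twice, using that both $v$ and $\nabla v$ vanish on $\partial\Omega$, and noting that for conforming functions the jump contributions in $\Norm{v}_{\dG}$ vanish so that $\Norm{v}_{\dG}=\Norm{D^2 v}_{0,\Omega}$. The more delicate case is $v\in\Vn$. Here my strategy is to introduce an $H^2_0$-conforming enrichment (averaging) operator $\mathcal E_n: \Vn \to V\cap H^2_0(\Omega)$ of the type used in the $\mathcal C^0$-IPDG / Morley-enrichment literature, satisfying the approximation estimate
\[
\Norm{v-\mathcal E_n v}_{0,\Omega}^2 + \Norm{\nabla_n(v-\mathcal E_n v)}_{0,\Omega}^2 + \Norm{D^2_n(v-\mathcal E_n v)}_{0,\Omega}^2
\lesssim \Norm{\sigmabold^{\frac12}\llbracket v\rrbracket}_{0,\Fcaln}^2 + \Norm{\taubold^{\frac12}\llbracket \nabla v\rrbracket}_{0,\Fcaln}^2,
\]
with constant independent of $\hbf$ and $\p$ thanks to the choice of $\sigmabold$ and $\taubold$ in \eqref{dg-parameters}. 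Applying the triangle inequality and the standard Poincar\'e--Friedrichs inequality to $\mathcal E_n v\in H^2_0(\Omega)$, one then obtains
\[
\Norm{v}_{0,\Omega}
\le \Norm{v-\mathcal E_n v}_{0,\Omega} + \Norm{\mathcal E_n v}_{0,\Omega}
\lesssim \Norm{v-\mathcal E_n v}_{0,\Omega} + \Norm{D^2 \mathcal E_n v}_{0,\Omega}
\lesssim \Norm{v}_{\dG},
\]
where the last step uses the triangle inequality on $D^2\mathcal E_n v = D^2_n v - D^2_n(v-\mathcal E_n v)$ together with the enrichment estimate and the definition \eqref{dg-norm} of $\Norm{\cdot}_{\dG}$.

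The main obstacle is the construction of an enrichment operator whose approximation estimates are \emph{$\hp$-explicit} with constants independent of $\p$; for the $L^2$-$H^1$ terms the trace-penalization scalings $\sigmabold\sim \p^6/\hbf^3$ and $\taubold\sim \p^2/\hbf$ are tuned precisely to absorb the inverse powers of $\hbf$ and $\p$ appearing in standard trace and inverse inequalities on polynomial spaces, so the bookkeeping reduces to a careful scaling argument. Alternatively, one can bypass the enrichment construction by invoking a direct broken Poincar\'e--Friedrichs inequality for piecewise $H^2$ functions on quasi-uniform meshes with penalized jumps of $v$ and $\nabla v$; the two routes are equivalent in that both yield \eqref{eq:poincare-goal} with the same $\hp$-independent constant and thus complete the proof.
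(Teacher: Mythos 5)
Your proposal is correct and takes essentially the same route as the paper: Cauchy--Schwarz followed by a broken Poincar\'e--Friedrichs inequality for piecewise $H^2$ functions that bounds $\Norm{v}_{0,\Omega}$ by $\Norm{v}_{\dG}$. The paper's one-line proof simply cites such an inequality (Brenner--Wang--Zhao, Corollary~4.2) rather than constructing an $H^2_0$-conforming enrichment operator, i.e., it takes the second, shorter of the two equivalent routes you sketch at the end.
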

\begin{proof}
The assertion follows from the Cauchy-Schwarz inequality
and Poincar\'e-Friedrichs type inequalities for piecewise~$H^2$ functions;
see, e.g., \cite[Corollary~$4.2$]{Brenner-Wang-Zhao:2004}.
\end{proof}
The well posedness of method~\eqref{dG} follows combining 
the coercivity of~$\Bn(\cdot,\cdot)$ and the continuity of~$(f,\cdot)_{0,\Omega}$
with respect to the~$\Norm{\cdot}_{\dG}$ norm.

\begin{theorem} \label{theorem:well-posedness-dG}
Method~\eqref{dG} is well posed with a priori estimates
\[
\Norm{\un}_{\dG} \le 2\cC \Norm{\f}_{0,\Omega}.
\]
\end{theorem}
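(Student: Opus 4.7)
The plan is to apply the standard finite-dimensional Lax--Milgram argument, exploiting directly the three structural properties collected just above the statement: coercivity of $\Bn(\cdot,\cdot)$ and continuity of both $\Bn(\cdot,\cdot)$ and the load functional with respect to the $\Norm{\cdot}_{\dG}$ norm. Since $\Vn$ is a finite-dimensional subspace of $\Vn \oplus V$ and $\Norm{\cdot}_{\dG}$ is a genuine norm on $\Vn$ (by the coercivity bound in Lemma~\ref{lemma:properties-bf}, which in turn forces $\Norm{v}_\dG=0$ to imply $\Bn(v,v)=0$ and hence $v=0$ in $\Vn$), existence and uniqueness of $\un$ reduce to showing that the square linear system induced by~\eqref{dG} has trivial kernel.

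Concretely, I would first verify injectivity: if $\un \in \Vn$ satisfies $\Bn(\un,\vn)=0$ for all $\vn \in \Vn$, then choosing $\vn=\un$ and using the first inequality in~\eqref{continuity&coercivity:bf} gives $\frac12\Norm{\un}_\dG^2 \le 0$, hence $\un=0$. In finite dimensions this yields existence and uniqueness of the discrete solution to~\eqref{dG}.

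For the a priori bound, I would test~\eqref{dG} against $\vn = \un$ itself, apply the coercivity bound of Lemma~\ref{lemma:properties-bf} to the left-hand side and the right-hand side continuity of Lemma~\ref{lemma:rhs} to the load, obtaining
\[
\tfrac12 \Norm{\un}_\dG^2 \le \Bn(\un,\un) = (\f,\un)_{0,\Omega} \le \cC \Norm{\f}_{0,\Omega}\Norm{\un}_\dG.
\]
Dividing by $\Norm{\un}_\dG$ (trivial if it vanishes) yields the claimed estimate $\Norm{\un}_\dG \le 2\cC\Norm{\f}_{0,\Omega}$.

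There is no real obstacle here: all the work has already been done in Lemmas~\ref{lemma:lifting-properties}--\ref{lemma:rhs}, where the choice~\eqref{dg-parameters} of the penalty parameters $\sigmabold$ and $\taubold$ with constants $\csigmabold,\ctaubold>2\cS+\tfrac12$ is precisely what makes coercivity kick in after absorbing the lifting terms. The only point worth double-checking is that the continuity constant of the load in Lemma~\ref{lemma:rhs} is uniform in $\hbf$ and $\p$, which is exactly the content of that lemma via the broken Poincar\'e--Friedrichs inequality; everything else is a one-line testing argument.
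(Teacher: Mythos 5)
Your argument is correct and is exactly the route the paper takes: the text preceding the theorem states that well-posedness follows from the coercivity of $\Bn(\cdot,\cdot)$ (Lemma~\ref{lemma:properties-bf}) and the continuity of $(\f,\cdot)_{0,\Omega}$ (Lemma~\ref{lemma:rhs}), and testing with $\vn=\un$ gives precisely the constant $2\cC$. The only cosmetic remark is that the definiteness of $\Norm{\cdot}_{\dG}$ on $\Vn$ (needed to conclude $\un=0$ from $\Norm{\un}_{\dG}=0$) comes from the structure of the norm itself --- vanishing piecewise Hessian plus vanishing jumps of $v$ and $\nabla v$ on all faces, including boundary ones, force $v=0$ --- rather than from coercivity, but this does not affect the validity of your proof.
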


\section{Polynomial approximation results on tensor product-type meshes} \label{section:hp-polynomial}

In this section, we present several $\h\p$-optimal
polynomial approximation results.
They will be useful in Section~\ref{section:convergence} below
when proving the convergence rate of the IPDG
on tensor product-type meshes.

\paragraph*{Preliminary technical results.}
We recall a result concerned with approximation properties of the~$L^2$ and~$H^1$ projectors in one dimension;
see, e.g., \cite{Schwab:1997, Canuto-Quarteroni:1982} for details.
Given~$\Ihat:=(-1,1)$,
let~$\Pizp : L^2(\Ihat) \to \Pbb_\p(\Ihat)$
and~$\Piop : H^1(\Ihat) \to \Pbb_\p(\Ihat)$
be the $L^2$ and~$H^1$ projectors onto~$\Pbb_\p(\Ihat)$, respectively:
for all~$\qp \in \Pbb_\p(\Ihat)$,
\[
(\qp, v-\Pizp v)_{0,\Ihat} =0 \quad \forall v \in L^2(\Ihat),
\qquad
\begin{cases}
(\qp', (v-\Piop v)')_{0,\Ihat} =0 \\
(v - \Piop v)(\pm 1)=0
\end{cases}
\quad \forall v \in H^1(\Ihat).
\]

The following approximation estimates are valid;
see, e.g., \cite[Chapter 3]{Schwab:1997}
and \cite[Lemma~3.5]{Georgoulis:2003}.
\begin{lemma} \label{lemma:L2-H1:projectors}
For all~$\p \in \Nbb$,
given~$u \in H^{k}(\Ihat)$
with $k\geq 1$ and~$s:=\min(k,\p+1)$,
there exists a positive constant~$c$ independent of~$\p$ and~$k$
such that
\begin{equation} \label{H1-L2:estimates-CQ}
\begin{split}
\Norm{u-\Pizp u}_{0,\Ihat} & \le c \p^{-s} \SemiNorm{u}_{s,\Ihat},\\
\Norm{(u-\Piop u)'}_{0,\Ihat} & \le c \p^{-s+1} \SemiNorm{u}_{s,\Ihat},\\
\Norm{(u-\Pizp u)'}_{0,\Ihat} & \le c \p^{-s+\frac32} \SemiNorm{u}_{s,\Ihat}.
\end{split}
\end{equation}

\end{lemma}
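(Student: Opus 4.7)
The plan is to reduce all three estimates to the classical decay of Legendre coefficients for functions in $H^s(\Ihat)$, via the following roadmap. First I would fix the Legendre basis $\{L_k\}_{k\ge 0}$ on $\Ihat$ and recall the orthogonality relation $\Norm{L_k}_{0,\Ihat}^2 = 2/(2k+1)$, together with the sharp coefficient bound $\alpha_k^2\Norm{L_k}_{0,\Ihat}^2 \lesssim k^{-2s-1}\SemiNorm{u}_{s,\Ihat}^2$ valid for every $u=\sum_k\alpha_k L_k\in H^s(\Ihat)$, with a constant independent of both $p$ and $k$. For integer $s$ this is a consequence of the explicit factorial expression for the $L^2$ projection error given in \cite[Ch.~3]{Schwab:1997}; the fractional case follows by Hilbert interpolation in the regularity index.

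With this in hand, the first inequality is immediate: since $\Pizp u = \sum_{k\le p}\alpha_k L_k$ is precisely the truncated series, Parseval gives $\Norm{u-\Pizp u}_{0,\Ihat}^2 = \sum_{k>p}\alpha_k^2\Norm{L_k}_{0,\Ihat}^2$, whose tail sums to $p^{-2s}\SemiNorm{u}_{s,\Ihat}^2$. For the third inequality I would differentiate termwise and invoke the identity $\Norm{L_k'}_{0,\Ihat}^2 = k(k+1)$; the extra factor $k^2$ in the tail $\sum_{k>p}\alpha_k^2 k(k+1)$ is precisely the source of the three-halves loss with respect to $p$.

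For the second inequality I would switch to the integrated Legendre (Babu\v{s}ka--Shen) basis $\phi_k(x) := \int_{-1}^x L_{k-1}(t)\,dt$, which satisfies $\phi_k(\pm 1)=0$ for $k\ge 2$ and $\phi_k'=L_{k-1}$, so that $\{\phi_k\}_{k\ge 2}$ is $H^1$-orthogonal with zero endpoint values. Writing $u = \ell_u + \sum_{k\ge 2}\beta_k\phi_k$, where $\ell_u$ is the linear interpolant of $u$ at $\pm 1$, the coefficients $\beta_k$ coincide, up to normalization, with the Legendre coefficients of $u'\in H^{s-1}(\Ihat)$. Since $\Piop u = \ell_u + \sum_{2\le k\le p+1}\beta_k\phi_k$, a final Parseval computation applied to $(u-\Piop u)' = \sum_{k>p}\beta_k L_{k-1}$ yields the second bound after the one-order loss encoded in the $u\mapsto u'$ step.

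The main obstacle is extracting constants that are uniform both in $p$ and in the Sobolev index $k$: this is exactly what the factorial identities for Legendre polynomials deliver, and it is the reason why the bounds hold in the stated explicit form rather than with a $k$-dependent prefactor. For the purposes of the present paper one can equivalently invoke \cite[Ch.~3]{Schwab:1997} and \cite[Lemma~3.5]{Georgoulis:2003} directly, which already supply the three inequalities in the required form; the Legendre-expansion argument above is simply the underlying mechanism.
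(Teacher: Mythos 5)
The paper does not actually prove this lemma: it imports all three bounds from \cite[Chapter~3]{Schwab:1997} and \cite[Lemma~3.5]{Georgoulis:2003}, so your closing remark that one may simply invoke those references is precisely what is done here. Your reconstruction of the underlying mechanism, however, contains a genuine flaw at its starting point. The coefficient bound you posit is too strong by one power of the index: what the factorial identities deliver (cf.\ the identity used in Appendix~A of this paper, $\sum_{i\ge s}\frac{2}{2i+1}\frac{(i+s)!}{(i-s)!}\vert b_i\vert^2\le\Norm{u^{(s)}}_{0,\Ihat}^2$) is a \emph{weighted sum} bound; extracting a single term yields $\vert\alpha_i\vert^2\Norm{L_i}_{0,\Ihat}^2\lesssim i^{-2s}\SemiNorm{u}_{s,\Ihat}^2$, not $i^{-2s-1}$, and this individual bound is sharp (take $u$ with $u^{(s)}$ concentrated on one Legendre mode). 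Summing the correct individual bounds over the tail gives only $\p^{1-2s}$, i.e.\ the rate $\p^{1/2-s}$, half an order short of the first inequality. The optimal $\p^{-s}$ is obtained by keeping the weighted sum intact and factoring $\frac{(\p+1-s)!}{(\p+1+s)!}\approx\p^{-2s}$ out of the tail --- exactly the manipulation carried out for $u''$ in Appendix~A.

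The Parseval step in your argument for the third inequality also fails: the derivatives $L_k'$ are orthogonal with respect to the weight $(1-\xi^2)$, not in $L^2(\Ihat)$ (indeed $\int_{-1}^{1}L_j'L_k'\,d\xi=\min(j,k)(\min(j,k)+1)$ whenever $j+k$ is even and $j\neq k$), so $\Norm{\sum_{k>\p}\alpha_kL_k'}_{0,\Ihat}^2$ is not $\sum_{k>\p}\alpha_k^2\Norm{L_k'}_{0,\Ihat}^2$. The classical route (Canuto--Quarteroni) expands $L_k'$ back in the Legendre basis, applies Parseval to the rearranged series, and obtains the $3/2$ loss from a Cauchy--Schwarz estimate on the inner sums; your exponent is correct only because the two inaccuracies (overly strong coefficient decay and false orthogonality) happen to cancel. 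By contrast, your treatment of the second inequality via integrated Legendre polynomials is sound: $(u-\Piop u)'$ really is the $L^2(\Ihat)$-orthogonal truncation error of $u'\in H^{s-1}(\Ihat)$, and the bound follows from the first inequality applied to $u'$.
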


Next, we recall best approximation estimates of the elementwise $L^2$ projector~$\Pizp:L^2(\E) \to \Qbb_\p(\E)$
with respect to the boundary $L^2$ norm
on a $d$-dimensional tensor product-element $\E$;
see, e.g., \cite[Lemma~$3.9$]{Houston-Schwab-Suli:2002}.
\begin{lemma} \label{lemma:L2bulk:L2edge}
Let~$\E \subset \Rbb^d$, $d=2,3$ be a tensor product-type element,
i.e., a parallelogram if~$d=2$ and a parallelepiped if~$d=3$,
$\hE$ its diameter,
and~$u \in H^k(\E)$, $k\ge 1$.
Then, for all~$1\le s \le \min (\p+1,k)$, $\p \in \Nbb$,
there exists a positive constant~$c>0$
independent of~$\hE$, $k$, and~$\p$ such that
\[
\Norm{u-\Pizp u}_{0,\partial\E}
\le c \left( \frac{\hE}{\p+1} \right)^{s - \frac12}
        \Norm{u}_{s, \E}.
\]
\end{lemma}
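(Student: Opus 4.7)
The plan is to reduce to the reference cube $\widehat\E := (-1,1)^d$ via an affine map, on which $\Pizp$ factorizes as the tensor product $\Pi^{0,x_1}_\p \otimes \cdots \otimes \Pi^{0,x_d}_\p$ of 1D $L^2$ projections. The standard Jacobian scalings $\Norm{v}^2_{0,\partial\E} \sim \h_\E^{d-1}\Norm{\widehat v}^2_{0,\partial\widehat\E}$ and $\Norm{u}^2_{s,\E} \sim \h_\E^{d-2s}\Norm{\widehat u}^2_{s,\widehat\E}$ combine to produce exactly the factor $(\h_\E/\p)^{s-1/2}$, so that the claim follows from the dimensionless estimate
\[
\Norm{u - \Pizp u}_{0,\partial\widehat\E}
\lesssim \p^{1/2 - s}\Norm{u}_{s,\widehat\E}
\qquad \text{on } \widehat\E.
\]

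On $\widehat\E$, the natural tool is the continuous multiplicative trace inequality
\[
\Norm{v}^2_{0,\partial\widehat\E}
\lesssim \Norm{v}_{0,\widehat\E}\bigl(\Norm{v}_{0,\widehat\E} + \Norm{\nabla v}_{0,\widehat\E}\bigr)
\qquad \forall v \in H^1(\widehat\E),
\]
applied to $v = u - \Pizp u$. This reduces matters to sharp bulk $L^2$ and $H^1$ estimates for the tensor product $L^2$ projection. The $L^2$ bound $\Norm{u - \Pizp u}_{0,\widehat\E} \lesssim \p^{-s}\Norm{u}_{s,\widehat\E}$ follows by applying the first bound of Lemma~\ref{lemma:L2-H1:projectors} slicewise, invoking $L^2$-stability of the remaining 1D projections, and summing the contributions coming from the telescoping decomposition $I - \Pizp = \sum_{i=1}^d \big(\prod_{j<i}\Pi^{0,x_j}_\p\big)(I - \Pi^{0,x_i}_\p)$.

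The subtle point is the $H^1$ estimate: one needs the refined rate $\Norm{\nabla(u - \Pizp u)}_{0,\widehat\E} \lesssim \p^{1-s}\Norm{u}_{s,\widehat\E}$, which when inserted into the multiplicative trace inequality delivers $\p^{-s}\cdot \p^{1-s} = \p^{1-2s}$ and hence the claimed exponent $1/2 - s$ after taking the square root. Inserting instead the suboptimal rate $\p^{3/2-s}$ of Lemma~\ref{lemma:L2-H1:projectors} would give only $\p^{3/4-s}$, i.e., a gap of $\p^{1/4}$ with respect to the target. The sharper $\p^{1-s}$ rate on the cube is classical and is obtained by a direct analysis of the Legendre series of $u$ combined with the tensor product structure of $\Pizp$; see \cite[Lemma~3.9]{Houston-Schwab-Suli:2002} for the detailed argument. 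Once the two bulk estimates are in place, summing over the $2d$ faces of $\widehat\E$ and scaling back to $\E$ concludes the proof.

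The main obstacle is exactly this refined $\p^{1-s}$ bulk $H^1$ estimate: it is strictly sharper than what one obtains by applying Lemma~\ref{lemma:L2-H1:projectors} slicewise, and its proof requires genuine tensor product information through careful Legendre expansion bookkeeping rather than a direct reduction to the 1D estimates listed in this section.
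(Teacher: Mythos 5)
The paper does not actually prove this lemma; it quotes it from \cite{Houston-Schwab-Suli:2002}, so your reconstruction has to be judged on its own. It contains a genuine gap, and it is precisely at the step you yourself single out as the crux: the ``refined'' bulk bound $\Norm{\nabla(u-\Pizp u)}_{0,\Qhat}\lesssim \p^{1-s}\Norm{u}_{s,\Qhat}$ is false. Already in one dimension the $H^1$ error of the Legendre $L^2$ truncation is $\Theta(\p^{3/2-s})$ over the unit ball of $H^s(\Ihat)$ -- this is exactly the third estimate of Lemma~\ref{lemma:L2-H1:projectors}, and the half-order loss is sharp (it reflects the ratio $\Norm{L_j'}_{0,\Ihat}/\Norm{L_j}_{0,\Ihat}\approx j^{3/2}$ and the clustering of Legendre polynomials at $\pm1$; the $L^2$ projection is genuinely $\p^{1/2}$-suboptimal in $H^1$ compared with the best $H^1$ approximation). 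The tensor product structure cannot repair this: take $u(x,y)=f(x)$ on $\Qhat=(-1,1)^2$ with $f$ realizing the worst case of the one-dimensional bound; then $\Pizp u=\Pizxp f$ and $\nabla(u-\Pizp u)=((f-\Pizxp f)',0)^T$, so the two-dimensional $H^1$ error is again of order $\p^{3/2-s}$. Consequently your multiplicative-trace route tops out at $\Norm{u-\Pizp u}_{0,\partial\Qhat}^2\lesssim \p^{-s}\cdot\p^{3/2-s}=\p^{3/2-2s}$, i.e.\ $\p^{3/4-s}$, a loss of $\p^{1/4}$ against the claimed $\p^{1/2-s}$ -- exactly the shortfall you computed and then tried to argue away. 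Moreover, attributing the missing $\p^{1-s}$ estimate to \cite[Lemma~3.9]{Houston-Schwab-Suli:2002} is circular: that lemma \emph{is} the boundary estimate being proven here, not a bulk $H^1$ bound.

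The actual argument does not pass through a volume $H^1$ estimate at all. On a face of a tensor product element only one coordinate is frozen, so after the telescoping decomposition you already wrote down, each contribution is controlled by a one-dimensional \emph{endpoint} estimate for the $L^2$ truncation, of the form $\vert(v-\Pizp v)(\pm1)\vert^2\lesssim (\p+1)\,\frac{(\p+1-s)!}{(\p+1+s)!}\,\Norm{v^{(s)}}_{0,\Ihat}^2\approx \p^{1-2s}\Norm{v^{(s)}}_{0,\Ihat}^2$, obtained directly from the decay of the Legendre coefficients and $\vert L_j(\pm1)\vert=1$, combined with $L^2$-stability of the remaining one-dimensional projections in the tangential directions and integration over the face. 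This endpoint estimate is strictly better than what the trace inequality predicts from the bulk errors, which is why the detour through $\Norm{\nabla(u-\Pizp u)}_{0,\Qhat}$ cannot recover the sharp rate. Your affine-scaling bookkeeping in the first paragraph is correct and is the routine part; the proof stands or falls on replacing the false $H^1$ bound by the endpoint argument.
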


\noindent Now, we discuss $\h\p$-optimal approximation properties
by means of global $H^2$ piecewise polynomials over~$\taun$.
We proceed in three steps.

\paragraph*{$H^2$ estimates in one dimension.}
We recall the existence of a polynomial matching the values of a given function and its derivative
at the endpoints of an interval with $\h\p$-optimal
approximation properties;
see~\cite[Corollary~$2$]{Beirao-Buffa-Rivas-Sangalli:2011}.
\begin{lemma} \label{lemma:H2-1D}
Let~$\Ihat:=(-1,1)$ and~$k\ge0$.
Then, for all~$\p\ge3$,
there exists a projection operator~$\Pcalpo: H^{k+2}(\Ihat) \to \Pbb_\p(\Ihat)$
such that the following continuity properties are valid:
for all~$u\in H^{k+2}(\Ihat)$
\begin{equation} \label{continuity:function&derivative}
(\Pcalpo u) (\pm 1) = u(\pm 1),
\qquad\qquad
(\Pcalpo u)' (\pm 1) = u'(\pm 1) .
\end{equation}
Moreover, the following upper bounds are valid:
given~$s := \min(k,\p-1)$,
\begin{equation} \label{1D:hp-approx}
\begin{split}
\Norm{(u-\Pcalpo u)''}_{0,\Ihat}^2
& \le \frac{(\p-s-1)!}{(\p+s-1)!}
        \Norm{u^{(s+2)}}_{0,\Ihat}^2, \\
\Norm{(u-\Pcalpo u)'}_{0,\Ihat}^2
& \le \frac{(\p-s-1)!}{(\p+s-1)!} \frac{1}{(\p-1)\p}
            \Norm{u^{(s+2)}}_{0,\Ihat}^2,\\
\Norm{u-\Pcalpo u}_{0,\Ihat}^2
& \le \frac{(\p-s-1)!}{(\p+s-1)!}
    c(\p)  \Norm{u^{(s+2)}}_{0,\Ihat}^2,
\end{split}
\end{equation}
where $c(\p)$ is independent of~$k$ and
\[
\begin{split}
c(\p)\approx \p^{-4}.
\end{split}
\]
\end{lemma}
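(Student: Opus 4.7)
The plan is to construct~$\Pcalpo$ by first absorbing the four boundary data of~\eqref{continuity:function&derivative} with a cubic Hermite interpolant, and then truncating the spectral expansion of the remainder in a basis orthogonal for the Hessian form. Concretely, I would introduce the cubic Hermite interpolant~$I_H u \in \Pbb_3(\Ihat)$ matching~$u(\pm 1)$ and~$u'(\pm 1)$, so that the remainder~$w := u - I_H u$ belongs to $H^2_0(\Ihat)\cap H^{k+2}(\Ihat)$ and $w''\in H^k(\Ihat)$ carries no leftover boundary trace. Let~$\{\phi_j\}_{j\ge 2}$ denote the sequence of integrated Legendre polynomials on~$\Ihat$: $\phi_j \in \Pbb_j(\Ihat)$, $\phi_j(\pm 1)=\phi_j'(\pm 1)=0$, and $\phi_j''$ equals (up to normalization) the Legendre polynomial of degree~$j-2$. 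These form an orthogonal basis of~$H^2_0(\Ihat)$ for the inner product $(u,v)\mapsto\int_{\Ihat} u''v''$. Expanding $w = \sum_{j\ge 2} c_j \phi_j$, I would set
\[
\Pcalpo u := I_H u + \sum_{j=2}^{\p} c_j \phi_j \in \Pbb_{\p}(\Ihat),
\]
which is well defined for~$\p\ge 3$ and satisfies~\eqref{continuity:function&derivative} by construction.

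The error $e := u - \Pcalpo u = \sum_{j>\p} c_j \phi_j$ then lies in $H^2_0(\Ihat)$, and $e''$ is precisely the Legendre tail of $w''=u''$. The first inequality in~\eqref{1D:hp-approx} would follow from a classical $k$-explicit estimate on Legendre truncation: Parseval together with~$s$ successive uses of the Jacobi differentiation identity shifts the spectral index and produces exactly the gamma-function factor $(\p-s-1)!/(\p+s-1)!$ in front of $\Norm{u^{(s+2)}}_{0,\Ihat}^2$; this is standard, see, e.g., \cite[Chapter~3]{Schwab:1997}. For the second inequality, I would use that~$e'$ is a tail in the~$L^2$-orthogonal family~$\{\phi_j'\}_{j>\p}$: identifying~$\phi_j'$ with antiderivatives of Legendre polynomials and computing their~$L^2$ norms yields the extra factor $1/((\p-1)\p)$ at the leading index. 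The third inequality follows by applying an analogous Poincaré-type step to~$e$ in terms of~$e'$, contributing another factor of order~$\p^{-2}$; combined with the previous~$\p^{-2}$ this gives $c(\p)\approx \p^{-4}$.

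The hard part will be keeping the constants explicit and independent of~$k$. The $k$-independence comes out automatically once the Jacobi differentiation identity is applied cleanly, since the gamma-function ratio absorbs all the $k$-dependence. The subtle point is the sharp~$\p^{-4}$ in~$c(\p)$: a naive use of the Poincaré inequalities on~$H^1_0(\Ihat)$ or~$H^2_0(\Ihat)$ would only yield an~$O(1)$ factor per integration step. One must exploit that~$e$ and~$e'$ live in high-frequency spectral tails, which boosts each Poincaré step from an~$O(1)$ constant to one of order~$\p^{-2}$. This refined observation is essentially the content of~\cite[Corollary~2]{Beirao-Buffa-Rivas-Sangalli:2011}, which I would ultimately invoke for the explicit form of the constants in~\eqref{1D:hp-approx}.
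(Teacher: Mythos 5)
Your construction produces exactly the operator of the paper's Appendix~\ref{appendix:lemma:H2-1D} --- truncating the Legendre expansion of $u''$ at degree $\p-2$ and integrating up with matched endpoint data (your Hermite-plus-bubble splitting is an equivalent presentation, modulo the fact that the doubly vanishing integrated Legendre bubbles start at degree $4$, not $2$) --- and your route to the three bounds (the weighted Legendre identity for the $k$-explicit factorial factor, one power of $\p^{-1}$ gained per antidifferentiation from the spectral-tail structure) is the same, so the proposal is essentially the paper's proof. The one imprecision is your claim that the antiderivatives of Legendre polynomials form an $L^2$-orthogonal family: they are orthogonal only with respect to the weight $(1-\xi^2)^{-1}$, and their second antiderivatives are merely tridiagonal (in steps of two) even for that weighted product --- this is exactly where the paper's appendix does its careful work for the $L^2$ bound (the $\beth^i_1,\dots,\beth^i_4$ computation) --- though since you ultimately invoke \cite[Corollary~2]{Beirao-Buffa-Rivas-Sangalli:2011} for the explicit constants, the argument as proposed stands.
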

\begin{proof}
We provide the details of the proof in Appendix~\ref{appendix:lemma:H2-1D} for completeness.
\end{proof}
An application of Stirling's formula,
see, e.g., \cite[Corollary~$3.12$]{Schwab:1997},
yields the following $\p$-optimal approximation result
for the projector~$\Pcalpo$.
\begin{corollary} \label{corollary:H2-1D}
Let~$\Ihat:=(-1,1)$ and~$\p\ge3$.
Then, there exists a positive constant~$\cod>0$ independent of~$\p$ and~$k$ such that
the projection operator~$\Pcalpo: H^{k+2}(\Ihat) \to \Pbb_\p(\Ihat)$, $k \ge 0$, from Lemma~\ref{lemma:H2-1D} satisfies,
for $s:=\min(k,\p-1)$,
\begin{equation} \label{1D:hp-approx:cor}
\begin{split}
& \cod (\p-1)^2 \Norm{u-\Pcalpo u}_{0,\Ihat}
+ (\p-1) \Norm{(u-\Pcalpo u)'}_{0,\Ihat}
+ \Norm{(u-\Pcalpo u)''}_{0,\Ihat} \\
& \le \left(\frac{e}{2}\right)^{\frac{s}{\p-1}}
    (\p-1)^{-s} \Norm{u^{(s+2)}}_{0,\Ihat}
  \le \frac{e}{2}
    (\p-1)^{-s} \Norm{u^{(s+2)}}_{0,\Ihat}.
\end{split}
\end{equation}
\end{corollary}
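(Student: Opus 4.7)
The plan is to combine the three approximation bounds of Lemma \ref{lemma:H2-1D} with Stirling's asymptotic formula applied to the factorial quotient $(\p-s-1)!/(\p+s-1)!$. The three weighted seminorms on the left-hand side of \eqref{1D:hp-approx:cor} are carefully chosen so that the extra $\p$-factors on the right-hand sides of \eqref{1D:hp-approx} are exactly compensated, allowing the whole expression to collapse to a single term involving $[(\p-s-1)!/(\p+s-1)!]^{1/2}\Norm{u^{(s+2)}}_{0,\Ihat}$. From there, a standard Stirling-type estimate on the factorial quotient introduces the sharp $(\p-1)^{-s}$ rate together with the mild correction $(e/2)^{s/(\p-1)}$.

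First I would take square roots of the three inequalities in \eqref{1D:hp-approx} and multiply the $L^2$ and $H^1$ estimates by $\cod(\p-1)^2$ and $(\p-1)$, respectively. This produces three contributions, each of the form $(\text{weight})\cdot [(\p-s-1)!/(\p+s-1)!]^{1/2}\Norm{u^{(s+2)}}_{0,\Ihat}$, with the respective weights $\cod(\p-1)^2\sqrt{c(\p)}$, $\sqrt{(\p-1)/\p}\le 1$, and $1$. The asymptotic $c(\p)\approx\p^{-4}$ recorded in Lemma \ref{lemma:H2-1D} guarantees that the first weight is uniformly bounded in $\p$, so one can fix $\cod$ (depending only on the absolute constant hidden in $\approx$) small enough that the sum of the three contributions is controlled by $[(\p-s-1)!/(\p+s-1)!]^{1/2}\Norm{u^{(s+2)}}_{0,\Ihat}$. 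Next, I would invoke Stirling's formula, as recalled in \cite[Corollary~3.12]{Schwab:1997}, in the form
\[
\left[\frac{(\p-s-1)!}{(\p+s-1)!}\right]^{1/2}
\le \left(\frac{e}{2}\right)^{s/(\p-1)}(\p-1)^{-s}.
\]
Inserting this into the bound obtained in the previous step yields the first inequality of \eqref{1D:hp-approx:cor}. The second inequality is then immediate, since $s=\min(k,\p-1)\le\p-1$ forces $(e/2)^{s/(\p-1)}\le e/2$.

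The main point requiring care is the bookkeeping of the constants rather than any deep new ingredient: one must verify that $\cod$ can be chosen independently of both $\p$ and the regularity index $k$. This relies on two features of Lemma \ref{lemma:H2-1D}, namely the $\p$-independent asymptotic $c(\p)\approx\p^{-4}$ (which makes $(\p-1)^2\sqrt{c(\p)}$ uniformly bounded) and the fact that all three bounds share the single factorial prefactor $(\p-s-1)!/(\p+s-1)!$, so the weighting introduces no additional $k$-dependence. Once these two points are checked, the Stirling estimate is a one-line quote and the rest of the argument is purely algebraic.
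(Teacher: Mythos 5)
This is the same argument the paper intends: its entire justification of Corollary~\ref{corollary:H2-1D} is the single sentence preceding it, namely that the result follows from Lemma~\ref{lemma:H2-1D} by applying Stirling's formula (via \cite[Corollary~3.12]{Schwab:1997}) to the common factorial quotient, and your weighting of the three square-rooted bounds --- $\cod(\p-1)^2\sqrt{c(\p)}$ bounded uniformly thanks to $c(\p)\approx\p^{-4}$, $(\p-1)/\sqrt{(\p-1)\p}\le 1$, and $1$ --- is exactly the bookkeeping that makes this precise. The one caveat is that your claim that the sum of the three contributions is controlled by $\bigl[(\p-s-1)!/(\p+s-1)!\bigr]^{1/2}\Norm{u^{(s+2)}}_{0,\Ihat}$ with constant one cannot be literally true no matter how small $\cod$ is chosen (the second and third weights alone already sum to more than one, and for $s=0$ the factorial quotient equals the claimed right-hand side exactly, leaving no slack to absorb anything), so the final inequality really only holds up to a harmless multiplicative constant; this imprecision is inherited from the corollary as stated rather than introduced by your argument.
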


Based on this, we derive stability properties for the projection operator constructed in Lemma~\ref{lemma:H2-1D}.
\begin{corollary} \label{corollary:H2-stability-1D}
The projection operator~$\Pcalpo \cdot$ constructed in Lemma~\ref{lemma:H2-1D}
satisfies the following stability properties:
there exists a positive constant~$c$ independent
of~$\p$ and~$k$ such that,
for all~$u\in H^2(\Ihat)$,
\begin{equation} \label{stability:Pcalp}
\begin{split}
\Norm{(\Pcalpo u)''}_{0,\Ihat}
& \le \Norm{u''}_{0,\Ihat},\\
\Norm{(\Pcalpo u)'}_{0,\Ihat}
& \le \Norm{u'}_{0,\Ihat} + \p^{-1} \Norm{u''}_{0,\Ihat},\\
\Norm{\Pcalpo u}_{0,\Ihat}
& \le  \Norm{u}_{0,\Ihat} + c \p^{-2} \Norm{u''}_{0,\Ihat}.
\end{split}
\end{equation}
\end{corollary}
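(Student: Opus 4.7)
The plan is to obtain the three stability estimates in~\eqref{stability:Pcalp} by combining an $H^2$-seminorm contractivity property of $\Pcalpo$ with the approximation bounds of Corollary~\ref{corollary:H2-1D} specialized to the minimal regularity $k=0$ (hence $s=0$) and the triangle inequality.

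For the first bound I would exploit that the construction of $\Pcalpo$ in Appendix~\ref{appendix:lemma:H2-1D} yields an $H^2$-orthogonal decomposition
\begin{equation*}
\Norm{u''}_{0,\Ihat}^2 = \Norm{(u-\Pcalpo u)''}_{0,\Ihat}^2 + \Norm{(\Pcalpo u)''}_{0,\Ihat}^2,
\end{equation*}
from which the estimate is immediate. To establish this identity, let $L \in \Pbb_3(\Ihat)$ be the cubic Hermite interpolant of $u$ at $\pm 1$, so that by~\eqref{continuity:function&derivative} both $u-L$ and $\Pcalpo u - L$ belong to $\Pbb_\p(\Ihat) \cap H_0^2(\Ihat)$. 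The operator $\Pcalpo$ is built so that $\Pcalpo u - L$ minimizes $\Norm{((u-L) - v)''}_{0,\Ihat}$ over $v \in \Pbb_\p(\Ihat) \cap H_0^2(\Ihat)$; the corresponding Euler--Lagrange condition gives Pythagoras for $u-L$. Two integrations by parts, using that $L''$ is linear and that $u - \Pcalpo u$ together with its first derivative vanishes at $\pm 1$, yield $(L'', (u - \Pcalpo u)'')_{0,\Ihat} = 0$; expanding $\Norm{u''}_{0,\Ihat}^2$ and $\Norm{(\Pcalpo u)''}_{0,\Ihat}^2$ as squared norms of $L'' + (u-L)''$ and $L'' + (\Pcalpo u - L)''$ respectively then transfers Pythagoras from $u-L$ to $u$ itself.

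For the remaining two bounds I would use the triangle inequality $\Norm{(\Pcalpo u)'}_{0,\Ihat} \le \Norm{u'}_{0,\Ihat} + \Norm{(u - \Pcalpo u)'}_{0,\Ihat}$ and analogously for $\Norm{\Pcalpo u}_{0,\Ihat}$, controlling the approximation errors via Corollary~\ref{corollary:H2-1D} at $k=0$. The latter yields
\begin{equation*}
\Norm{(u - \Pcalpo u)'}_{0,\Ihat} \le \tfrac{e}{2(\p-1)}\Norm{u''}_{0,\Ihat}, \qquad \Norm{u - \Pcalpo u}_{0,\Ihat} \le \tfrac{e}{2\cod(\p-1)^2}\Norm{u''}_{0,\Ihat},
\end{equation*}
and the claimed $\p^{-1}$ and $\p^{-2}$ factors follow upon absorbing the universal constants and using $\p - 1 \approx \p$.

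The delicate step is the first estimate: since it is stated with sharp constant $1$, combining Corollary~\ref{corollary:H2-1D} at $k=0$ (which already gives $\Norm{(u - \Pcalpo u)''}_{0,\Ihat} \le \Norm{u''}_{0,\Ihat}$) with the triangle inequality would only produce the weaker bound with constant $2$. One must instead rely on the precise variational structure of $\Pcalpo$ made explicit in the appendix and verify that, through the cubic Hermite correction $L$, the orthogonality within $\Pbb_\p(\Ihat) \cap H_0^2(\Ihat)$ propagates all the way to $u$ and $\Pcalpo u$. Once this is in hand, the last two inequalities are routine.
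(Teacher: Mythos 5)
Your proposal is correct, and the treatment of the second and third inequalities is essentially the paper's own argument: triangle inequality plus the $s=0$ approximation bounds (the paper invokes~\eqref{1D:hp-approx} directly rather than Corollary~\ref{corollary:H2-1D}, which is immaterial; note that both routes really produce a harmless constant in front of $\p^{-1}$ in the second estimate as well, since $((\p-1)\p)^{-1/2}>\p^{-1}$). Where you genuinely diverge is the first inequality. The paper's proof is a one-liner: by~\eqref{writing:u:1}, $(\Pcalpo u)''$ \emph{is} the truncated Legendre expansion of $u''$, i.e.\ the $L^2(\Ihat)$-orthogonal projection of $u''$ onto $\Pbb_{\p-2}(\Ihat)$, so $\Norm{(\Pcalpo u)''}_{0,\Ihat}\le\Norm{u''}_{0,\Ihat}$ is immediate, with no need for the Hermite correction $L$ or the transfer of Pythagoras from $u-L$ to $u$. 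Your variational characterization --- that $\Pcalpo u - L$ minimizes the $H^2$ seminorm of the error over $\Pbb_\p(\Ihat)\cap H^2_0(\Ihat)$ --- is true, but it is not how the operator is defined in Appendix~\ref{appendix:lemma:H2-1D}, so asserting it as ``the operator is built so that'' leaves a small verification owed: one must check that the constrained minimizer reproduces the Legendre truncation, which amounts to showing that $L''$ coincides with the degree-$\le 1$ part of the Legendre expansion of $u''$ (a short integration-by-parts computation using $L(\pm1)=u(\pm1)$, $L'(\pm1)=u'(\pm1)$). Once that is supplied your argument closes, but the direct projection viewpoint buys the sharp constant $1$ with far less machinery; your detour would be the natural route only if the operator were \emph{defined} variationally rather than by series truncation.
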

\begin{proof}
The first inequality follows from the definition of the operator~$\Pcalpo$ in~\eqref{writing:u:1}
and the orthogonality properties of Legendre polynomials.
As for the other two inequalities,
it suffices to add and subtract~$u'$ and~$u$,
respectively, use the triangle inequality,
and apply the estimates in~\eqref{1D:hp-approx} with~$s=0$.
\end{proof}

\paragraph*{$H^2$ estimates on tensor product elements.}
Given~$\Pcalpx$, $\Pcalpy$, and~$\Pcalpz$ the one dimensional projectors from Lemma~\ref{lemma:H2-1D} applied to the restrictions of functions along the directions~$x$, $y$, and~$z$, respectively,
we define the two and three dimensional operators
\[
\Pcalp \cdot := \Pcalpx \Pcalpy \cdot,
\qquad\qquad
\Pcalp \cdot := \Pcalpx \Pcalpy \Pcalpz \cdot.
\]
By tensor product arguments,
we deduce the two and three dimensional counterparts of Lemma~\ref{lemma:H2-1D}.

\begin{theorem} \label{theorem:H2-2D/3D}
Let~$\Qhat:=(-1,1)^2$ be the reference square,
with vertical edges~$\e_1$ and~$\e_3$, horizontal edges~$\e_2$ and~$\e_4$,
and outward unit normal vector~$\nbf_{\Qhat}$.
Further, let~$\p\ge3$
and~$u \in H^{2,k+2}(\Qhat)$, $k \ge 0$.
Then, the following continuity properties on~$\partial \Qhat$ are valid:
\begin{equation} \label{continuity:2D}
\Pcalp u_{|\e_i}=
\begin{cases}
\Pcalpx (u_{|\e_i}) & \text{if $i$ is even,}\\
\Pcalpy (u_{|\e_i}) & \text{if $i$ is odd,}
\end{cases}
\qquad
(\nbf_{\Qhat} \cdot \nabla \Pcalp u)_{|\e_i}=
\begin{cases}
\nbf_{\Qhat} \cdot \nabla \Pcalpy (u_{|\e_i}) & \text{if $i$ is even,}\\
\nbf_{\Qhat} \cdot \nabla \Pcalpx (u_{|\e_i}) & \text{if $i$ is odd.}
\end{cases}
\end{equation}
In particular, $\Pcalp u$ and~$u$ share the same value at the vertices of~$\Qhat$.
If~$\Qhat=(-1,1)^3$, analogous continuity properties are valid.

Let~$s = \min(k,\p-1)$.
Then, the following optimal $\p$-error estimates are valid.
If~$\Qhat=(-1,1)^2$,
there exists a positive constant~$c$ independent
of~$\p$ and~$k$ such that
\begin{align} \label{2D:hp-approx:L2}
\Norm{u-\Pcalp u}_{0,\Qhat}
&\le c \p^{-s-2} \left( \Norm{\partialx^{s+2}u}_{0,\Ihat}
    + \Norm{\partialy^{s+2}u}_{0,\Ihat}
    + \Norm{\partialx^{2} \partialy^{s} u}_{0,\Ihat} \right);\\
 \label{2D:hp-approx:H1}
\Norm{\nabla (u-\Pcalp u)}_{0,\Qhat}
& \le c \p^{-s-1}
\left( \Norm{\partialx^{s+2}u}_{0,\Qhat}
         + \Norm{\partialx \partialy^{s+1}u}_{0,\Qhat}
         + \Norm{\partialx^2 \partialy^{s} u}_{0,\Qhat}\right.\\
& \nonumber
\qquad \qquad \quad  \left. + \Norm{\partialx^{s} \partialy^2 u}_{0,\Qhat}
        + \Norm{\partialx^{s+1} \partialy u}_{0,\Qhat}
        + \Norm{\partialy^{s+2}u}_{0,\Qhat} \right);\\
\label{2D:hp-approx:H2}
\Norm{D^2 (u-\Pcalp u)}_{0,\Qhat}
&\le c \p^{-s}
\left( \Norm{\partialx^{s+2} u}_{0,\Qhat}
        + \Norm{\partialx^2 \partialy^{s} u}_{0,\Qhat}
        + \Norm{\partialy^{s+2} u}_{0,\Qhat} \right.\\
& \nonumber
\qquad \qquad \left. + \Norm{\partialx^{s} \partialy^2 u}_{0,\Qhat}
         \Norm{\partialx^{s+1} \partialy u}_{0,\Qhat}
        + \Norm{\partialx \partialy^{s+1}u}_{0,\Qhat}
        + \Norm{\partialx^2 \partialy^{s} u}_{0,\Qhat} \right).
\end{align}
Instead, if~$\Qhat=(-1,1)^3$,
there exists a positive constant~$c$
independent of~$\p$ and~$k$ such that
\begin{align}
\label{3D:hp-approx:L2}
\Norm{u-\Pcalp u}_{0,\Qhat}
&\le c \p^{-s-2}
    \left( \Norm{\partialx^{s+2}u}_{0,\Ihat} + \Norm{\partialy^{s+2}u}_{0,\Ihat} + \Norm{\partialz^{s+2}u}_{0,\Ihat}\right.\\
& \nonumber
\quad   \left. + \Norm{\partialx^{2} \partialz^{s} u}_{0,\Ihat}
    + \Norm{\partialy^{2} \partialz^{s} u}_{0,\Ihat}
    + \Norm{\partialx^{2} \partialy^{2} \partialz^{s-2} u}_{0,\Ihat}
    \right);\\
\label{3D:hp-approx:H1}
\Norm{\nabla (u-\Pcalp u)}_{0,\Qhat}
& \le c \p^{-s-1}
 \left( \Norm{\partialx^{s+2}u}_{0,\Qhat}
        + \Norm{\partialy^{s+2}u}_{0,\Qhat}
        + \Norm{\partialz^{s+2}u}_{0,\Qhat}
        + \Norm{\partialx \partialy^{s+1}u}_{0,\Qhat}\right.\\
& \nonumber \hspace*{-2cm}
\;     + \Norm{\partialy \partialz^{s+1}u}_{0,\Qhat}
        + \Norm{\partialz \partialx^{s+1}u}_{0,\Qhat}
        + \Norm{\partialx \partialy^{s+1}u}_{0,\Qhat}
        + \Norm{\partialy \partialz^{s+1}u}_{0,\Qhat}
        + \Norm{\partialz \partialx^{s+1}u}_{0,\Qhat}\\
& \nonumber \hspace*{-2cm}
\;     + \Norm{\partialx \partialy^2 \partialz^{s-1}u}_{0,\Qhat}
        + \Norm{\partialy \partialz^2 \partialx^{s-1}u}_{0,\Qhat}
        + \Norm{\partialz \partialx^2 \partialy^{s-1}u}_{0,\Qhat}
        + \Norm{\partialx^2 \partialy^{s}u}_{0,\Qhat}
        + \Norm{\partialy^2 \partialz^{s}u}_{0,\Qhat} \\
& \nonumber \hspace*{-2cm}
\;     + \Norm{\partialz^2 \partialx^{s}u}_{0,\Qhat}
        + \Norm{\partialx^2 \partialz^{s}u}_{0,\Qhat}
        + \Norm{\partialy^2 \partialx^{s}u}_{0,\Qhat}
        + \Norm{\partialz^2 \partialy^{s}u}_{0,\Qhat}\\
& \nonumber  \hspace*{-2cm}
\;     \left.   + \Norm{\partialx^2 \partialy^2 \partialz^{s-2}u}_{0,\Qhat}
        + \Norm{\partialy^2 \partialz^2 \partialx^{s-2}u}_{0,\Qhat}
        + \Norm{\partialz^2 \partialx^2 \partialy^{s-2}u}_{0,\Qhat} \right);\\
\label{3D:hp-approx:H2}
\Norm{D^2 (u-\Pcalp u)}_{0,\Qhat}
& \le c \p^{-s}
\left( \Norm{\partialx^{s+2} u}_{0,\Qhat}
        + \Norm{\partialy^{s+2} u}_{0,\Qhat}
        + \Norm{\partialz^{s+2} u}_{0,\Qhat}
        + \Norm{\partialx^{2} \partialy^{s} u}_{0,\Qhat}
     \right.\\
& \nonumber \hspace*{-2cm}
\; + \Norm{\partialy^{2} \partialz^{s} u}_{0,\Qhat}
     + \Norm{\partialz^{2} \partialx^{s} u}_{0,\Qhat}
     + \Norm{\partialx^{2} \partialz^{s} u}_{0,\Qhat}
     + \Norm{\partialy^{2} \partialx^{s} u}_{0,\Qhat}
     + \Norm{\partialz^{2} \partialy^{s} u}_{0,\Qhat}
     + \Norm{\partialx^{s+1} \partialy u}_{0,\Qhat}\\
& \nonumber \hspace*{-2cm}
\; + \Norm{\partialy^{s+1} \partialz u}_{0,\Qhat}
     + \Norm{\partialz^{s+1} \partialx u}_{0,\Qhat}
     + \Norm{\partialx \partialy^{s+1} u}_{0,\Qhat}
     + \Norm{\partialy \partialz^{s+1} u}_{0,\Qhat}
     + \Norm{\partialz \partialx^{s+1} u}_{0,\Qhat}\\
& \nonumber \hspace*{-2cm}
\; + \Norm{\partialx \partialy \partialz^{s} u}_{0,\Qhat}
     + \Norm{\partialy \partialz \partialx^{s} u}_{0,\Qhat}
     + \Norm{\partialz \partialx \partialy^{s} u}_{0,\Qhat}
     + \Norm{\partialx \partialy^2 \partialz^{s-1} u}_{0,\Qhat}
     + \Norm{\partialy \partialz^2 \partialx^{s-1} u}_{0,\Qhat}\\
& \nonumber \hspace*{-2cm}
\; + \Norm{\partialz \partialx^2 \partialy^{s-1} u}_{0,\Qhat}
     + \Norm{\partialx^{2} \partialy \partialz^{s-1} u}_{0,\Qhat}
     + \Norm{\partialy^{2} \partialz \partialx^{s-1} u}_{0,\Qhat}
     + \Norm{\partialz^{2} \partialx \partialy^{s-1} u}_{0,\Qhat}\\
& \nonumber \hspace*{-2cm}
\; \left. + \Norm{\partialx^2 \partialy^2 \partialz^{s-2} u}_{0,\Qhat}
            + \Norm{\partialy^2 \partialz^2 \partialx^{s-2} u}_{0,\Qhat}
            + \Norm{\partialz^2 \partialx^2 \partialy^{s-2} u}_{0,\Qhat} \right).
\end{align}
\end{theorem}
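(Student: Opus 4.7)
The plan is to exploit the tensor-product structure of the operator~$\Pcalp$ together with the 1D estimates from Corollary~\ref{corollary:H2-1D} and the 1D stability from Corollary~\ref{corollary:H2-stability-1D}, applied direction by direction.

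I would first establish the continuity properties~\eqref{continuity:2D}. Since~$\Pcalpx$ and~$\Pcalpy$ act on distinct variables, they commute. On a vertical edge~$\e_i$ ($x=\pm1$, odd $i$), the endpoint-preservation property~\eqref{continuity:function&derivative} of the underlying 1D operator gives $(\Pcalpx u)(\pm 1,\cdot)=u(\pm 1,\cdot)$, so $(\Pcalp u)_{|\e_i}=\Pcalpy(u_{|\e_i})$. For the normal derivative on such an edge (which is~$\partialx$ up to sign), commuting $\partialx$ with~$\Pcalpy$ and applying the derivative-matching part of~\eqref{continuity:function&derivative} to~$\Pcalpx$ yields $\partialx(\Pcalp u)_{|x=\pm 1}=\Pcalpy(\partialx u_{|x=\pm 1})$. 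The analogous identities on horizontal edges in~2D, and on the six faces of the cube in~3D, follow by permuting the roles of the variables. As a consequence, $\Pcalp u$ and~$u$ coincide at every vertex, which is what later guarantees $H^2$-conformity of the assembled piecewise polynomial on a tensor-product mesh.

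For the error estimates I would use the telescopic decomposition
\[
u-\Pcalp u \;=\; (I-\Pcalpx)u \;+\; \Pcalpx\,(I-\Pcalpy)u,
\]
and, in 3D, its three-factor analogue. The first summand is controlled by integrating the 1D approximation bound of Corollary~\ref{corollary:H2-1D} over the remaining variable(s); the second summand is handled by applying the $L^2$, $H^1$, or $H^2$ stability of~$\Pcalpx$ from Corollary~\ref{corollary:H2-stability-1D} in the $x$-direction (which is exactly what produces the mixed terms involving~$\partialx^2$ on the right-hand sides) and then the 1D approximation bound for~$\Pcalpy$ in the $y$-direction. When the target norm is $H^1$ or $H^2$, I would also commute single or mixed derivatives with the projector acting on the transverse variable; this is legitimate because the 1D operator is defined through an orthogonal expansion and commutes with differentiation in the orthogonal direction. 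Summing the contributions of $\partialx^2$, $\partialxy$, and $\partialy^2$ of the two summands reproduces \eqref{2D:hp-approx:L2}--\eqref{2D:hp-approx:H2}.

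The main obstacle I anticipate is bookkeeping rather than any conceptual difficulty: each mixed derivative on the left-hand side of~\eqref{2D:hp-approx:L2}--\eqref{3D:hp-approx:H2} expands, after the telescopic decomposition and the stability/approximation step, into a specific linear combination of anisotropic seminorms that must be matched with the right-hand side. In 3D one works with the longer decomposition
\[
I-\Pcalpx\Pcalpy\Pcalpz
\;=\;(I-\Pcalpx)+\Pcalpx(I-\Pcalpy)+\Pcalpx\Pcalpy(I-\Pcalpz),
\]
and iterates the $L^2$/$H^1$/$H^2$ stability of the intermediate factors; this iteration is precisely what produces the doubly-mixed contributions such as $\partialx^2\partialy^2\partialz^{s-2}u$ on the right-hand sides. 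No ingredient beyond Lemma~\ref{lemma:H2-1D}, Corollary~\ref{corollary:H2-stability-1D}, and routine Fubini/tensor-product arguments is needed; the factor~$\p^{-s}$, $\p^{-s-1}$, or $\p^{-s-2}$ in the three displays comes directly from the corresponding $\p$-power in~\eqref{1D:hp-approx:cor}, while the loss of two orders in smoothness tolerated in the transverse variable is exactly what the definition~\eqref{mixed-Sobolev-space} of~$H^{\ell,k}$ is designed to accommodate.
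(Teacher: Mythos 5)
Your proposal is correct and follows essentially the same route as the paper's proof in Appendices~\ref{appendix:theorem:H2-2D} and~\ref{appendix:theorem:H2-3D}: the telescopic decomposition $u-\Pcalp u=(I-\Pcalpx)u+\Pcalpx(I-\Pcalpy)u$ (and its three-factor analogue in 3D), combined with the one dimensional bounds~\eqref{1D:hp-approx:cor}, the stability estimates~\eqref{stability:Pcalp}, and the commutation identities~\eqref{essential-commutative-property:2D}. The only point the paper treats more carefully than you do is the verification of the commutation identities themselves, which it proves by writing $\Pcalpx u$ in integral form and differentiating under the integral sign rather than appealing directly to orthogonality.
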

\begin{proof}
As for the two dimensional case, we refer to~\cite{Beirao-Buffa-Rivas-Sangalli:2011}
and present some details in Appendix~\ref{appendix:theorem:H2-2D} for completeness.
We report the proof for the three dimensional estimates in Appendix~\ref{appendix:theorem:H2-3D}.
\end{proof}
\begin{remark} \label{remark:mixed-order Sobolev spaces}
In Theorem~\ref{theorem:H2-2D/3D}, the $p$-explicit approximation bounds are derived
for functions in the anisotropic Sobolev space $H^{2,k+2}(\Qhat)$
defined in~\eqref{mixed-Sobolev-space}.
This is due to the use of tensor product arguments based
on one dimensional approximation results.
The upside of considering such a regularity
is that the error bounds are fully explicit
with respect to the polynomial degree~$p$
and the Sobolev regularity index~$k$;
this is crucial to derive exponential convergence rate of the $hp$-version Galerkin methods.
Moreover, estimates as in Theorem~\ref{theorem:H2-2D/3D}
naturally appear in the approximation of solutions
to partial differential equations
with anisotropic Sobolev regularity.
We cannot extend the techniques derived so far
to the case of triangular meshes.
Notably, in that case, the estimates
also depend on the Sobolev regularity index~$k$;
see Section~\ref{subsection:optimal-convergence-triangle} below for more details.
\eremk
\end{remark}

\begin{remark} \label{remark:anisotropy}
In the error estimates of Theorem~\ref{theorem:H2-2D/3D},
it is possible to switch the order of the mixed derivatives
appearing in the norms on the right-hand sides.
This is of paramount importance to notice
in case one wishes to approximate
functions in anisotropic Sobolev spaces.
\eremk
\end{remark}

So far, we proved error estimates in two and three dimensions
for tensor product-type meshes.
These results can be extended to higher dimensions.
The corresponding proof hinges upon the same arguments used in Theorem~\ref{theorem:H2-2D/3D}:
error estimates in one dimension;
stability of the $H^2$ projector~$\Pcalpo$;
identities as in~\eqref{essential-commutative-property:2D} in general dimension.


By a scaling argument, we deduce $\h\p$-optimal error estimates
for functions with isotropic Sobolev regularity on tensor product-type elements.
\begin{corollary} \label{corollary:H2-2D}
Let~$\E$ be any parallelogram
with vertical edges~$\e_1$ and~$\e_3$, horizontal edges~$\e_2$ and~$\e_4$,
and outward unit normal vector~$\nbfE$, $\p\ge3$,
and~$u \in H^{k+2}(\E)$, $k \ge 0$.
Then, the following continuity properties on~$\partial \E$ are valid:
\[
\Pcalp u_{|\F_i}=
\begin{cases}
\Pcalpx (u_{|\F_i}) & \text{if $i$ is even,}\\
\Pcalpy (u_{|\F_i}) & \text{if $i$ is odd,}
\end{cases}
\qquad
(\nbfE \cdot \nabla \Pcalp u)_{|\F_i}=
\begin{cases}
\nbfE \cdot \nabla \Pcalpy (u_{|\F_i}) & \text{if $i$ is even,}\\
\nbfE \cdot \nabla \Pcalpx (u_{|\F_i}) & \text{if $i$ is odd.}
\end{cases}
\]
In particular, $\Pcalp u$ and~$u$ share the same value at the vertices of~$\E$.
If~$\E$ is a parallelepipeds, analogous continuity properties hold true.

Besides, the following optimal $\p$-error estimates are valid in two and three dimensions:
given~$s=\min(k,\p-1)$
and either a parallelogram or parallelepiped~$\E$,
there exists a positive constant~$c$
independent of~$\h$, $k$, and~$\p$ such that
\begin{equation} \label{hp-H2-approximation}
\frac{\p^2}{\hE^2} \Norm{u-\Pcalp u}_{0,\E}
+ \frac{\p}{\hE} \Norm{\nabla (u-\Pcalp u)}_{0,\E}
+ \Norm{D^2(u-\Pcalp u)}_{0,\E}
\le c
\frac{\h^{s}}{\p^{s}} \SemiNorm{u}_{s+2,\E}.
\end{equation}
\end{corollary}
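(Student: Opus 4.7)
The plan is to derive Corollary~\ref{corollary:H2-2D} from Theorem~\ref{theorem:H2-2D/3D} by a standard affine pullback together with a term-by-term domination of the anisotropic mixed-derivative norms by the isotropic Sobolev seminorm. First, I would fix any parallelogram (respectively parallelepiped) $\E$ and let $F_\E : \Qhat \to \E$ be an affine bijection from the reference cube onto $\E$, with $\Norm{DF_\E} \approx \hE$ and $\Norm{DF_\E^{-1}} \approx \hE^{-1}$. Pulling back by $F_\E$, I would define $\Pcalp$ on $\E$ as $(\Pcalp u) \circ F_\E := \Pcalp(u \circ F_\E)$, where the right-hand projector is the one of Theorem~\ref{theorem:H2-2D/3D}. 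Since $F_\E$ maps edges to edges and the affine map commutes with taking traces and normal derivatives (up to constant factors determined by $DF_\E$), the edgewise/facewise continuity identities in~\eqref{continuity:2D} transport verbatim to the corresponding statement on~$\E$; in particular, vertex values are preserved because evaluation at a point commutes with the affine change of variables.

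Next, for the quantitative bound~\eqref{hp-H2-approximation}, I would apply the chain rule to relate $\Norm{D^j(u - \Pcalp u)}_{0,\E}$ to $\Norm{D^j((u - \Pcalp u)\circ F_\E)}_{0,\Qhat}$ for $j = 0,1,2$. The Jacobian of~$F_\E$ contributes a factor $\hE^{d/2}$ in the $L^2$ norm and each application of $D$ contributes a factor $\hE^{-1}$ on the reference side and $\hE$ on the physical side, yielding the weights $\hE^{-2}, \hE^{-1}, 1$ in front of the three terms on the left of~\eqref{hp-H2-approximation}. Applying Theorem~\ref{theorem:H2-2D/3D} on the reference cube then produces, up to the factor $p^{-s}$, a sum of $L^2$ norms of mixed partial derivatives of $u \circ F_\E$ of total order $s+2$.

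The key step is then to observe that every mixed derivative appearing on the right-hand side of the estimates~\eqref{2D:hp-approx:L2}--\eqref{3D:hp-approx:H2} is a partial derivative of total order exactly $s+2$, and therefore each such term is bounded, pointwise, by $\SemiNorm{u\circ F_\E}_{s+2,\Qhat}$. Transporting back to~$\E$ via the same chain-rule scaling yields a factor $\hE^{s+2-d/2}$ for the differentiations balanced against the Jacobian $\hE^{d/2}$, so the right-hand side becomes $c\, \hE^{s}\, \SemiNorm{u}_{s+2,\E}$ (after absorbing the~$p^{-s}$ from Theorem~\ref{theorem:H2-2D/3D}). Combined with the weights produced by the scaling on the left, this gives exactly~\eqref{hp-H2-approximation}, and using quasi-uniformity one may replace $\hE$ by~$\h$ in the bound if desired.

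The main obstacle I anticipate is purely bookkeeping: the right-hand sides in Theorem~\ref{theorem:H2-2D/3D}, especially in three dimensions, list a long sum of mixed-derivative norms (including terms such as $\Norm{\partialx^2 \partialy^2 \partialz^{s-2} u}_{0,\Qhat}$ that require $s \ge 2$). I would need to verify that the low-regularity cases $s \in \{0,1\}$ are handled by the convention that a negative-order derivative is omitted or that the corresponding term drops out, and to confirm that every surviving term still has total differential order $s+2$ so that the isotropic bound $\SemiNorm{u}_{s+2,\E}$ dominates it. Once this bookkeeping is done, the scaling is routine and the continuity identities at edges follow immediately from the reference-element case.
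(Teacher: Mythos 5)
Your proposal is correct and follows essentially the same route as the paper, which simply invokes ``a scaling argument'' to pass from the reference-element anisotropic bounds of Theorem~\ref{theorem:H2-2D/3D} to the isotropic, scaled estimate~\eqref{hp-H2-approximation}; your bookkeeping of the Jacobian factors ($\hE^{d/2-j}$ on the left, $\hE^{s+2-d/2}$ on the right) and the observation that every mixed derivative on the right-hand sides has total order exactly $s+2$, hence is dominated by $\SemiNorm{u}_{s+2,\E}$, are exactly the points that make the argument work. The caveats you flag (dropping the terms with formally negative derivative orders when $s\in\{0,1\}$, and transporting the trace and normal-derivative identities through the affine map) are genuine but routine, and do not affect the validity of the proof.
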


\paragraph*{Global $H^2$ estimates.}
Corollary~\ref{corollary:H2-2D} implies $\h\p$-optimal error estimates by means of
global~$H^2$ piecewise polynomials over the given tensor product-type meshes.
More precisely, we have the following result.
\begin{theorem} \label{theorem:approximation-tensor-product-meshes}
Let~$\taun$ be a tensor product-type mesh
over~$\Omega \subset \Rbb^d$, $d=2,3$,
of either parallelograms ($d=2$) or parallelepipeds ($d=3$)
as in Section~\ref{subsection:meshes};
$k\ge0$; $\p \in \Nbb$; $s:= \min(\p,k-1)$.
Then there exists $\Pcalp: H^2(\Omega) \cap H^{2,2d}(\taun) \to H^2(\Omega) \cap \Vn$
and a positive constant~$c$ independent of~$\hbf$, $k$, and~$\p$ such that,
for all~$u \in H^2(\Omega) \cap H^{k+2}(\taun)$,
\[
\p^2 \Norm{\hbf^{-2}        (u-\Pcalp u)}_{0,\Omega}
+ \p \Norm{\hbf^{-1} \nabla (u-\Pcalp u)}_{0,\Omega}
+    \Norm{          D^2    (u-\Pcalp u)}_{0,\Omega}
\le
c \p^{-s} \Norm{\hbf^s D^{s+2} u}_{0,\taun}.
\]
\end{theorem}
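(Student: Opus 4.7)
The plan is to define the global approximant $\Pcalp u$ element by element, using the local operator $\Pcalp$ from Corollary~\ref{corollary:H2-2D} on each $\E\in\taun$, and then to verify that the resulting piecewise polynomial is genuinely globally $H^2$-conforming. The error estimate will then follow by squaring and summing the local bound~\eqref{hp-H2-approximation} over all elements, using the quasi-uniformity of the mesh to absorb the element sizes into the weights $\hbf^{-2}$, $\hbf^{-1}$.

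The key observation is that for a piecewise polynomial $w$ to lie in $H^2(\Omega)$ it suffices that both $w$ and $\nabla w$ have matching traces across each interior face $\F\in\FcalnI$. This reduces, once $w$ is continuous, to matching the normal derivative of $w$ across each such face. Corollary~\ref{corollary:H2-2D} tells us exactly this: on every face $\F_i$ of an element $\E$ the trace of $\Pcalp u$ equals a tensor-product one dimensional projection (for instance $\Pcalpx$ or $\Pcalpy$) of $u_{|\F_i}$, and analogously the normal derivative $\nbfE\cdot\nabla\Pcalp u$ on $\F_i$ is the tangential one dimensional projection of $\nbfE\cdot\nabla u_{|\F_i}$. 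Since both of these expressions depend only on data of $u$ on the face itself (not on the element to which the face belongs), the two traces coming from the two elements sharing $\F$ coincide. In three dimensions the argument is identical: a face of a parallelepiped is a parallelogram on which the restriction of $\Pcalp$ reduces to a two dimensional tensor product of one dimensional operators applied to $u_{|\F}$ and to $\nbfE\cdot\nabla u_{|\F}$, which are again intrinsic to the face. Hence $\Pcalp u\in H^2(\Omega)\cap \Vn$.

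For the error estimate, I apply~\eqref{hp-H2-approximation} on each element $\E\in\taun$ with $s_\E:=\min(\p,k-1)$, square the three terms, and sum over $\E$. Using $\hbf_{|\E}=\hE$ and collecting the weights in front of the norms, the left-hand side becomes exactly
\[
\p^4 \Norm{\hbf^{-2}(u-\Pcalp u)}_{0,\Omega}^2
+ \p^2 \Norm{\hbf^{-1}\nabla (u-\Pcalp u)}_{0,\Omega}^2
+ \Norm{D^2(u-\Pcalp u)}_{0,\Omega}^2,
\]
while the right-hand side is bounded by $c\,\p^{-2s}\Norm{\hbf^s D^{s+2}u}_{0,\taun}^2$ after observing that the local factor $\hE^{2s_\E}$ can be written as $\hbf^{2s}$ under the integral. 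Taking square roots and recalling the definition of the broken norm yields the claimed bound. The regularity assumption $u\in H^2(\Omega)\cap H^{k+2}(\taun)$ ensures both global $H^2$ conformity of $u$ (so that the face traces of $u$ and $\nbfE\cdot\nabla u$ are well defined and single-valued) and enough local Sobolev regularity to apply the elementwise bound.

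The only point requiring care, and the one I regard as the main obstacle, is the verification of $H^2$-conformity: one must check that the face-intrinsic description of the traces of $\Pcalp u$ and of its normal derivative in Corollary~\ref{corollary:H2-2D} really does not depend on the ambient element, in particular that in 3D the two dimensional face restriction of a tensor product of three one dimensional projectors coincides with the corresponding tensor product of two one dimensional projectors acting on $u_{|\F}$. Once this identification is made precise using the commutation of the $\Pcalpx$, $\Pcalpy$, $\Pcalpz$ directions, the conformity across each interface is immediate and the proof reduces to the routine summation already described.
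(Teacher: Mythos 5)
Your proposal is correct and follows exactly the route the paper intends: Theorem~\ref{theorem:approximation-tensor-product-meshes} is presented as a direct consequence of Corollary~\ref{corollary:H2-2D}, obtained by applying the local projector elementwise, using the face-intrinsic form of the traces of $\Pcalp u$ and $\nbfE\cdot\nabla\Pcalp u$ (which, by the single-valuedness of the traces of $u\in H^2(\Omega)$ and of $\nbf\cdot\nabla u$, forces the jumps across interior faces to vanish and yields global $H^2$-conformity), and then summing the squared local bounds~\eqref{hp-H2-approximation} over the quasi-uniform mesh. The only cosmetic discrepancy is the index $s$: the local estimate~\eqref{hp-H2-approximation} is stated for $s=\min(k,\p-1)$, whereas the theorem (and your $s_\E$) uses $\min(\p,k-1)$, which is evidently a typo in the statement rather than a flaw in your argument.
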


The operator~$\Pcalp$ is defined on the space~$H^2(\Omega) \cap H^{2,2d}(\taun)$.
The reason for this will be apparent from the construction of the operator itself
in Appendices~\ref{appendix:lemma:H2-1D},
\ref{appendix:theorem:H2-2D},
and~\ref{appendix:theorem:H2-3D}.
In particular, we shall need that mixed derivatives of order~$2$ in each direction
are square integrable.

\section{Optimal convergence} \label{section:convergence}

In this section, we show the main result of this paper:
method~\eqref{dG} is $\h\p$-optimal on different types of meshes
if the biharmonic problem~\eqref{weak-formulation} is endowed with homogeneous boundary conditions.
We focus on tensor product-type meshes in 2D and 3D in Section~\ref{subsection:optimal-tensor-product};
estimates in weaker norms are shown in Section~\ref{subsection:Aubin:Nitsche};
a guideline on how to show $\h\p$-optimal error estimates on triangular meshes
is provided in Section~\ref{subsection:optimal-convergence-triangle}.

\subsection{Optimal convergence on tensor product-type meshes}
\label{subsection:optimal-tensor-product}

In this section, we prove $\h\p$-optimal error estimates for tensor product-type meshes in two and three dimensions.
The analysis bases on the existence of the global $H^2$ polynomial projector from Theorem~\ref{theorem:approximation-tensor-product-meshes}.

Henceforth, given~$D \subset \Rbb^d$, $d=2,3$,
either a parallelogram (in two dimensions)
or a parallelepiped (in three dimensions),
we introduce~$\Pizp$ as the composition of the one dimensional $L^2$ projectors in Lemma~\ref{lemma:L2-H1:projectors}
along the coordinate directions.
In other words,
we let
\[
\Pizp = \Pizxp \Pizyp \quad\text{ if }\quad d=2,
\qquad\qquad
\Pizp = \Pizxp \Pizyp \Pizzp \quad\text{ if }\quad d=3.
\]

\begin{theorem} \label{theorem:convergence}
Let~$\taun$ be a quasi-uniform tensor product-type mesh of either parallelograms (2D) or parallelepipeds (3D)
as in Section~\ref{subsection:meshes}.
Let~$u$ and~$\un$ be the solutions to~\eqref{weak-formulation} and~\eqref{dG}, respectively.
Assume that~$u\in H^{s+2}(\Omega)$, $s:=\min(k,\p-1)$, $\p \ge 2$.
Then, there exists a positive constant~$c$
independent of~$\h$, $k$, and~$\p$
such that
\[
\Norm{u-\un}_{\dG}
\le c \left(\frac{\h}{\p}\right)^{s} \Norm{u}_{s+2,\Omega}.
\]
In other words, for homogeneous essential boundary conditions,
method~\eqref{dG} is $\h\p$-optimal.
\end{theorem}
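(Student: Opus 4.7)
The plan is to combine a standard Strang/Céa-type argument with the fact that the $H^2$-conforming projector $\Pcalp$ from Theorem~\ref{theorem:approximation-tensor-product-meshes} eliminates the penalty contributions in the dG norm, so that the error is controlled purely by the Hessian approximation estimate. First, consistency of the scheme~\eqref{dG} (applied to $u \in H^2_0(\Omega)$, which is smooth enough for the lifting identity~\eqref{lifting-operator} to reduce to volumetric terms) gives Galerkin orthogonality $\Bn(u-\un,\vn)=0$ for all $\vn\in\Vn$. Setting $\vn := \un - \Pcalp u \in \Vn$ and invoking the coercivity and continuity estimates of Lemma~\ref{lemma:properties-bf}, I obtain
\[
\tfrac12\Norm{\un-\Pcalp u}_{\dG}^2 \le \Bn(\un-\Pcalp u,\un-\Pcalp u) = \Bn(u-\Pcalp u,\un-\Pcalp u)\le 2\Norm{u-\Pcalp u}_{\dG}\Norm{\un-\Pcalp u}_{\dG},
\]
so a triangle inequality yields $\Norm{u-\un}_{\dG}\lesssim \Norm{u-\Pcalp u}_{\dG}$.

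Second, I would check that every jump contribution in $\Norm{u-\Pcalp u}_{\dG}$ vanishes. On interior faces this is immediate: both $u\in H^2_0(\Omega)\subset H^2(\Omega)$ and $\Pcalp u\in H^2(\Omega)$ (by Theorem~\ref{theorem:approximation-tensor-product-meshes}), so the traces of $u-\Pcalp u$ and $\nabla(u-\Pcalp u)$ are single-valued, i.e. $\llbracket u-\Pcalp u\rrbracket=0$ and $\llbracket\nabla(u-\Pcalp u)\rrbracket=0$. On boundary faces, $u\equiv 0$ and $\nabla u\equiv 0$ on $\partial\Omega$ (from the homogeneous essential conditions combined with the vanishing of the tangential component of $\nabla u$). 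The continuity properties of $\Pcalp$ recorded in Corollary~\ref{corollary:H2-2D} express the boundary trace and normal derivative of $\Pcalp u$ on a boundary face $\F\subset\partial\Omega$ as a one dimensional projection of $u_{|\F}$ and $(\nbfE\cdot\nabla u)_{|\F}$ respectively; since both data vanish, so do $\Pcalp u_{|\partial\Omega}$ and $(\nbfE\cdot\nabla\Pcalp u)_{|\partial\Omega}$, and hence $\nabla\Pcalp u_{|\partial\Omega}=0$ as the tangential derivative of a vanishing trace. Consequently all boundary jump terms vanish as well, leaving
\[
\Norm{u-\Pcalp u}_{\dG}^2 = \Norm{\Dnt(u-\Pcalp u)}_{0,\Omega}^2.
\]

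Finally, I would invoke the global $\h\p$-optimal Hessian estimate from Theorem~\ref{theorem:approximation-tensor-product-meshes}, together with the quasi-uniformity assumption on $\taun$ (so $\hbf\approx\h$ elementwise), to conclude
\[
\Norm{\Dnt(u-\Pcalp u)}_{0,\Omega}\le c\,\p^{-s}\Norm{\hbf^{s}D^{s+2}u}_{0,\taun}\lesssim \left(\frac{\h}{\p}\right)^{s}\SemiNorm{u}_{s+2,\Omega},
\]
chaining the three displays to close the proof. The routine bits are the Galerkin orthogonality/Strang step and the final application of the approximation theorem; the delicate part, which I would spell out explicitly, is the verification that $\Pcalp u$ inherits the homogeneous Dirichlet and Neumann traces from $u$, because this is what removes the penalty terms and is the precise place where the hypothesis of homogeneous essential boundary conditions is used. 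For nonhomogeneous data the boundary traces would not cancel and the standard half-order loss in $\p$ would reappear, consistent with the discussion in Remark~\ref{remark:nh-boundary-conditions}.
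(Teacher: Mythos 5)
Your treatment of the best-approximation part is essentially the paper's: you pick $\vn=\Pcalp u$, observe that the interior jumps vanish by the $H^2$ conformity of $u$ and $\Pcalp u$, that the boundary jumps vanish because $\Pcalp$ reproduces the homogeneous essential traces, and conclude that $\Norm{u-\Pcalp u}_{\dG}$ reduces to the Hessian term, which Theorem~\ref{theorem:approximation-tensor-product-meshes} bounds optimally. That much is correct and is exactly how the paper establishes~\eqref{first-term-Strang}.

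However, there is a genuine gap at the very first step: the scheme~\eqref{dG} is \emph{not} consistent, so the Galerkin orthogonality $\Bn(u-\un,\vn)=0$ you invoke is false. The reason is the lifting operator: $\Lcal(\vn)$ lives in $[\Vn]^{d\times d}$, so in $\Bn(u,\vn)$ the term $\int_\Omega \Lcal(\vn):\Dnt u$ effectively sees only $\Pibfzp \Dnt u$, whereas integrating $\int_\Omega D^2u:\Dnt\vn$ by parts elementwise (using $\Delta^2u=f$) produces face terms involving the unprojected $D^2u$ and $\nabla\cdot D^2u$. Their mismatch is precisely the residual
\[
\Bn(u-\un,\xi)
 = \int_{\Fcaln}\big\{\nbf\cdot\big(\nabla\cdot(\Dnt u-\Pibfzp\Dnt u)\big)\big\}\llbracket\xi\rrbracket
 - \int_{\Fcaln}\big\{(\Dnt u-\Pibfzp\Dnt u)\nbf\big\}\cdot\llbracket\nabla\xi\rrbracket ,
\]
which is generically nonzero. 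The paper therefore uses the inconsistent error splitting $\Norm{u-\un}_{\dG}\le 5\inf_{\vn}\Norm{u-\vn}_{\dG}+2\sup_{\xi}\vert\Bn(u-\un,\xi)\vert/\Norm{\xi}_{\dG}$, and the bulk of its proof is the delicate estimation of this second term: one must bound face norms of $\Dnt u-\Pibfzp\Dnt u$ and of its divergence using Lemma~\ref{lemma:L2bulk:L2edge}, an inverse trace inequality, and the one-dimensional $L^2$-projector estimates, checking that the suboptimal powers of $\p$ incurred there are exactly absorbed by the weights $\h^{3/2}\p^{-3}$ and $\h^{1/2}\p^{-1}$ coming from the penalty parameters~\eqref{dg-parameters}, leaving a factor $\p^{-1/2}(\h/\p)^{s}$. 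Without this analysis the theorem is not proved; your argument as written silently assumes the residual is zero and thus omits the technical core of the result.
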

\begin{proof}
As in~\cite[Theorem~$5.3$]{Georgoulis-Houston:2009},
we have the following error splitting:
\begin{equation} \label{error-splitting}
\Norm{u-\un}_{\dG}
\le 5 \inf_{\vn \in \Vn} \Norm{u-\vn}_{\dG}
    + 2 \sup_{\xi \in \Vn} \frac{\vert\Bn(u-\un,\xi)\vert}{\Norm{\xi}_{\dG}}.
\end{equation}
The first term represents a best polynomial approximation result;
the second measures the inconsistency of the scheme.

We have standard $\h\p$-optimal (piecewise) polynomial estimates on the first term:
\begin{equation} \label{first-term-Strang}
\inf_{\vn \in \Vn} \Norm{u-\vn}_{\dG}
\lesssim \left(\frac{\h}{\p}\right)^{s} \Norm{u}_{s+2,\Omega}.
\end{equation}
To show~\eqref{first-term-Strang}, it suffices to pick~$\vn = \Pcalp u$,
where~$\Pcalp\cdot$ is as in Theorem~\ref{theorem:approximation-tensor-product-meshes}.
The jump terms on interior edges vanish
due to the global $H^2$ regularity of~$u$ and~$\Pcalp u$;
the jump terms on boundary edges vanish since~$u$ has homogeneous essential boundary conditions
and~$\Pcalp u$ preserves polynomial essential boundary conditions.

Thus, we only need to derive estimates for the inconsistency term,
i.e., the second term on the right-hand side of~\eqref{error-splitting}.
Proceeding as in~\cite[Lemma~$5.4$]{Georgoulis-Houston:2009},
we set~$\eta:=u-\un$ and write
\begin{equation} \label{starting-point}
\begin{split}
\Bn(u-\un,\xi)
& = \int_{\Fcaln} 
    \big\{ \nbf \cdot (\nabla\cdot \Dnt \eta)
    - \nbf \cdot (\nabla\cdot \Pibfzp \Dnt \eta) \big\}
    \llbracket \xi \rrbracket\\
& - \int_{\Fcaln} \big\{ (\Dnt \eta)\nbf - (\Pibfzp \Dnt\eta)\nbf \big\}
    \llbracket \nabla \xi \rrbracket
=: A_1 + A_2.
\end{split}
\end{equation}
Since~$\Pibfzp\cdot$ preserves polynomials of degree~$\p$,
we have
\begin{equation} \label{preliminary-fact-projectors}
\Dnt \eta - \Pibfzp \Dnt \eta
= \Dnt (u-\un) - \Pibfzp (\Dnt u) + \Pibfzp (\Dnt \un)
= \Dnt u - \Pibfzp \Dnt u.
\end{equation}
Using~\eqref{dg-norm},
\eqref{dg-parameters},
and~\eqref{preliminary-fact-projectors},
we write
\begin{equation} \label{estimate:A1}
\begin{split}
A_1
& \lesssim \frac{\h^{\frac32}}{\p^3}
         \Norm{\nabla\cdot (\Dnt \eta - \Pibfzp \Dnt\eta)}_{0,\Fcaln}
         \Norm{\sigmabold^{\frac12} \llbracket \xi \rrbracket}_{0,\Fcaln}
\lesssim \frac{\h^{\frac32}}{\p^3}
         \Norm{\nabla\cdot (\Dnt u - \Pibfzp \Dnt u)}_{0,\Fcaln}
         \Norm{\xi}_{\dG}.
\end{split}
\end{equation}
Next, we focus on the term~$A_2$ and use similar manipulations:
\begin{equation} \label{estimate:A2}
\begin{split}
A_2
& \lesssim \frac{\h^{\frac12}}{\p}
         \Norm{\Dnt \eta - \Pibfzp \Dnt \eta}_{0,\Fcaln}
         \Norm{\taubold^{\frac12} \llbracket \xi \rrbracket}_{0,\Fcaln}
\lesssim \frac{\h^{\frac12}}{\p}
         \Norm{\Dnt u - \Pibfzp \Dnt u}_{0,\Fcaln}
         \Norm{\xi}_{\dG}.\\
\end{split}
\end{equation}
We insert~\eqref{estimate:A1} and~\eqref{estimate:A2}
in~\eqref{starting-point} and get
\[
\begin{split}
\vert \Bn(u-\un,\xi) \vert
& \lesssim \left[
    \frac{\h^{\frac32}}{\p^3} \Norm{\nabla\cdot (\Dnt u - \Pibfzp \Dnt u)}_{0,\Fcaln}
    + \frac{\h^{\frac12}}{\p} \Norm{\Dnt u - \Pibfzp \Dnt u}_{0,\Fcaln}  \right] \Norm{\xi}_{\dG}.
\end{split}
\]
Thus, we arrive at the following bound on the inconsistency term in~\eqref{error-splitting}:
\begin{equation} \label{T1+T2}
\sup_{\xi \in \Vn} \frac{\vert\Bn(u-\un,\xi)\vert}{\Norm{\xi}_{\dG}}
\lesssim
\frac{\h^{\frac32}}{\p^3} \Norm{\nabla\cdot (\Dnt u - \Pibfzp \Dnt u)}_{0,\Fcaln}
    + \frac{\h^{\frac12}}{\p} \Norm{\Dnt u - \Pibfzp \Dnt u}_{0,\Fcaln}
    =: T_1 + T_2.
\end{equation}
To conclude, we are left to estimate the terms~$T_1$ and~$T_2$.
First, we consider the term~$T_1$
and estimate the $L^2$ norm on each face~$\F$ of the following vector:
\[
\nabla\cdot (\Dnt u - \Pibfzp \Dnt u) =
\begin{bmatrix}
\partialx (\partialx^2 u   - \Pizp \partialx^2 u)
+ \partialy (\partialxy u  - \Pizp \partialxy u)  \\
\partialx (\partial_{yx} u - \Pizp \partial_{yx} u)
+ \partialy (\partialy^2 u - \Pizp \partialy^2 u)
\end{bmatrix}.
\]
Using the triangle inequality, we have to estimate the four terms appearing above.
Since each of them can be handled similarly, we only show the estimates for one of them.
Moreover, we show local estimates, say,
on each face~$\F$ belonging (at least) to the element~$\E$,
as the global counterparts follow immediately.
The triangle inequality implies
\begin{equation} \label{Z1-Z2}
\Norm{\partialx (\partialx^2u - \Pizp\partialx^2u)}_{0,\F}
\le
\Norm{\partialx^3 u - \Pizpmo \partialx^3 u}_{0,\F}
+
\Norm{\Pizpmo \partialx^3 u - \partialx \Pizp \partialx^2 u}_{0,\F}
=: Z_1 + Z_2.
\end{equation}
As for the term~$Z_1$, Lemma~\ref{lemma:L2bulk:L2edge} yields
\begin{equation} \label{Z1}
Z_1 \lesssim \left( \frac{\h}{\p}  \right) ^{s-\frac32} \Norm{\partialx^3 u}_{s-1,\E}
\le \left( \frac{\h}{\p}  \right) ^{s-\frac32} \Norm{u}_{s+2,\E}
\qquad \forall s \ge 0.
\end{equation}
As for the term~$Z_2$, we use the polynomial inverse trace inequality~\cite[eq. (4.6.4)]{Schwab:1997}
(which gives an order suboptimality in~$\p$)
and the triangle inequality:
\begin{equation} \label{Z21-Z22}
\begin{split}
Z_2
&   \lesssim  \frac{\p}{\h^{\frac12}} \Norm{\Pizpmo \partialx^3 u - \partialx \Pizp \partialx^2 u}_{0,\E} \\
&   \le \frac{\p}{\h^{\frac12}}
                \left[  \Norm{\partialx^3 u - \Pizpmo \partialx^3 u}_{0,\E}
                + \Norm{\partialx(\partialx^2 u - \Pizp \partialx^2 u)}_{0,\E} \right]
    = Z_{2,1} + Z_{2,2}.
\end{split}
\end{equation}
Using the approximation properties of the~$L^2$ projector yields
\[
Z_{2,1}
\lesssim \frac{\p}{\h^{\frac12}}  \left(\frac{\h}{\p} \right)^{s-1} \Norm{\partialx^3 u}_{s-1,\E}
\le \frac{\p}{\h^{\frac12}}  \left(\frac{\h}{\p} \right)^{s-1} \Norm{u}_{s+2,\E} .
\]
On the other hand, standard manipulations entail
\[
\begin{split}
Z_{2,2}
& \lesssim \frac{\p}{\h^{\frac12}}
            \left[
                \Norm{\partialx(\partialx^2 u - \Pizyp \partialx^2 u)}_{0,\E}
                + \Norm{\partialx(\Pizyp \partialx^2 u - \Pizyp \Pizxp \partialx^2 u)}_{0,\E}
            \right] \\
& \le \frac{\p}{\h^{\frac12}}
            \left[
                \Norm{\partialx(\partialx^2 u - \Pizyp \partialx^2 u)}_{0,\E}
                + \Norm{\partialx(\partialx^2 u - \Pizxp \partialx^2 u)}_{0,\E}
            \right].
\end{split}
\]
Using the approximation properties of the $L^2$ projection on the first term,
see Lemma~\ref{lemma:L2-H1:projectors},
and~\eqref{H1-L2:estimates-CQ} (which gives half an order suboptimality in~$\p$) on the second term imply
\[
Z_{2,2}
\lesssim
\frac{\p^{\frac32}}{\h^{\frac12}} \left( \frac{\h}{\p} \right)^{s-1} \Norm{\partialx^3 u}_{s-1,\E}
\lesssim
\frac{\p^{\frac32}}{\h^{\frac12}} \left( \frac{\h}{\p} \right)^{s-1} \Norm{u}_{s+2,\E} .
\]
We plug the estimates on~$Z_{2,1}$ and~$Z_{2,2}$ in~\eqref{Z21-Z22} and arrive at
\begin{equation} \label{Z2}
Z_2
\lesssim
\frac{\p^{\frac32}}{\h^{\frac12}} \left( \frac{\h}{\p} \right)^{s-1} \Norm{ u}_{s+2,\E},
\qquad s \ge 0.
\end{equation}
Inserting next~\eqref{Z1} and~\eqref{Z2} in~\eqref{Z1-Z2} gives
\[
\Norm{\partialx (\partialx^2u - \Pizp\partialx^2u)}_{0,\F}
\lesssim \frac{\p^{\frac32}}{\h^{\frac12}} \left( \frac{\h}{\p} \right)^{s-1} \Norm{ u}_{s+2,\E}
=  \frac{\p^{\frac52}}{\h^{\frac32}} \left( \frac{\h}{\p} \right)^{s} \Norm{ u}_{s+2,\E},
\qquad s \ge 0.
\]
In overall, also recalling the factor~$\frac{\h^{\frac32}}{\p^3}$ in front of~$T_1$,
we deduce
\begin{equation} \label{T1}
T_1
\lesssim \p^{-\frac12}
\left(\frac{\h}{\p}\right)^{s}  \Norm{u}_{s+2, \Omega}.
\end{equation}
As for the term~$T_2$ on the right-hand side of~\eqref{T1+T2},
we use Lemma~\ref{lemma:L2bulk:L2edge} and obtain
\begin{equation} \label{T2}
T_2 \lesssim \p^{-\frac12} \left( \frac\h\p \right)^{s}  \Norm{u}_{s+2,\Omega}.
\end{equation}
Inserting~\eqref{T1} and~\eqref{T2} in~\eqref{T1+T2} leads to
\begin{equation} \label{second-term-Strang}
\sup_{\xi \in \Vn} \frac{\vert\Bn(u-\un,\xi)\vert}{\Norm{\xi}_{\dG}}
\lesssim \p^{-\frac12} \left( \frac\h\p \right)^{s}  \Norm{u}_{s+2,\Omega}
\le \left( \frac\h\p \right)^{s}  \Norm{u}_{s+2,\Omega}.
\end{equation}
A combination of~\eqref{starting-point}, \eqref{first-term-Strang}, and~\eqref{second-term-Strang} yields the assertion.
\end{proof}

\begin{remark} \label{remark:nonhomogeneous-BCs}
Theorem~\ref{theorem:convergence} states that
the DG scheme~\eqref{dG} delivers $\h\p$-optimal error estimates
for the biharmonic problem~\eqref{weak-formulation}
with homogeneous boundary conditions.
Following~\cite{Georgoulis-Hall-Melenk:2010},
it is possible to show that $\h\p$-optimal convergence is achieved with polynomial/smooth boundary conditions as well.
On the other hand, singularities arising on the boundary of the domain
yield a suboptimal $\p$-convergence,
which we shall highlight in the numerical experiments;
see Section~\ref{subsection:numerics:singular-boundary} below.
\eremk
\end{remark}

\subsection{Optimal convergence on tensor product-type meshes in weaker norms} \label{subsection:Aubin:Nitsche}
For tensor product-type meshes,
we discuss $\h\p$-optimal error estimates in weaker norms
under the assumption that~$\Omega$ is convex.
Notably, we are interested in the $L^2$ and broken-$H^1$ error estimates.
Since analogous techniques are employed, we focus on $L^2$ error estimates only.

\begin{theorem} \label{theorem:L2-convergence}
Let~$\taun$ be a quasi-uniform tensor product-type mesh of either parallelograms (2D) or parallelepipeds (3D)
as in Section~\ref{subsection:meshes}.
Let~$u$ and~$\un$ be the solutions to~\eqref{weak-formulation} and~\eqref{dG}, respectively.
Then, if~$\Omega$ is convex,
there exists a positive constant~$c$ independent of~$\h$ and~$\p$
such that
\[
\Norm{u-\un}_{0,\Omega}
\le c \left(\frac{\h}{\p} \right)^2 \Norm{u-\un}_{\dG}.
\]
Together with Theorem~\ref{theorem:convergence},
this implies $\h\p$-optimal convergence in the $L^2$ norm
if additional regularity on~$u$ is granted.
\end{theorem}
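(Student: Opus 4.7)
The natural strategy is the Aubin--Nitsche duality argument adapted to the IPDG biharmonic setting. I would set $e := u - \un$ and introduce the dual problem: find $w \in V = H^2_0(\Omega)$ such that
\[
B(v, w) = (e, v)_{0,\Omega} \qquad \forall v \in V,
\]
i.e.\ $\Delta^2 w = e$ with homogeneous clamped boundary conditions. The convexity of $\Omega$ enters through elliptic regularity for the biharmonic operator, which I would invoke to obtain $w \in H^4(\Omega)$ together with $\Norm{w}_{4,\Omega} \lesssim \Norm{e}_{0,\Omega}$; this is what enables the gain of two full orders in $\h/\p$.

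The pivotal identity to establish is $\Norm{e}_{0,\Omega}^2 = \Bn(e, w)$. I would derive it starting from $\Norm{e}_{0,\Omega}^2 = (e, \Delta^2 w)_{0,\Omega}$, performing two elementwise integrations by parts, and reorganizing the resulting face integrals into the averaged/jump format appearing in the definition~\eqref{lifting-operator} of $\Lcal(\cdot)$. The smoothness of $w$ together with $w \in V$ (so that $\llbracket w \rrbracket = 0$, $\llbracket \nabla w \rrbracket = 0$, $\Lcal(w) = 0$, and the stabilization terms with $w$ as an argument vanish) collapses many contributions and produces exactly the expression of~\eqref{bilinear-form} evaluated at $(e, w)$.

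The next step is Galerkin orthogonality: consistency of the scheme gives $\Bn(e, \vn) = 0$ for all $\vn \in \Vn$. I would pick $\vn = \Pcalp w$, the global $H^2$-conforming projector from Theorem~\ref{theorem:approximation-tensor-product-meshes}. The boundary traces of $\Pcalp w$ reduce to one-dimensional $H^2$ projections of the corresponding traces of $w$, as recorded in the continuity properties of Corollary~\ref{corollary:H2-2D}; since $w$ has homogeneous clamped data, these projected traces vanish, so $\Pcalp w$ belongs to $V \cap \Vn$ and is an admissible test function.

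Combining these ingredients,
\[
\Norm{e}_{0,\Omega}^2 = \Bn(e, w - \Pcalp w).
\]
Since $w - \Pcalp w \in V$, all jump and penalty contributions in $\Norm{w - \Pcalp w}_{\dG}$ vanish and the norm reduces to $\Norm{D^2 (w - \Pcalp w)}_{0,\Omega}$. The continuity of $\Bn(\cdot,\cdot)$ from Lemma~\ref{lemma:properties-bf}, the approximation estimate of Theorem~\ref{theorem:approximation-tensor-product-meshes} with $s = 2$ yielding $\Norm{w - \Pcalp w}_{\dG} \lesssim (\h/\p)^2 \Norm{w}_{4,\Omega}$, and the elliptic regularity bound then close the argument. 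The main technical obstacle is the careful bookkeeping of interior and boundary face terms needed to verify $\Norm{e}_{0,\Omega}^2 = \Bn(e, w)$; once that identity is in place, everything else is a clean reuse of the approximation and continuity results already assembled for the primal analysis in Theorem~\ref{theorem:convergence}.
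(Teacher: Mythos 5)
Your overall architecture (duality argument, $H^4$ regularity from convexity, testing with the conforming interpolant $\Pcalp w$) matches the paper's, but there is a genuine gap at the step you call pivotal: the identity $\Norm{u-\un}_{0,\Omega}^2 = \Bn(u-\un,w)$ is false for this scheme. The lifting operator $\Lcal$ in~\eqref{lifting-operator} is defined by duality against \emph{discrete} tensors only, so in $\Bn(e,w)$ the term $\int_\Omega \Lcal(e):\Dnt w$ evaluates to face integrals involving $\Pibfzp D^2 w$ rather than $D^2 w$ itself. Two elementwise integrations by parts of $(e,\Delta^2 w)_{0,\Omega}$ produce face terms with the \emph{true} traces $\{\nabla\Delta w\cdot\nbf\}$ and $\{D^2 w\,\nbf\}$, and the mismatch leaves a nonvanishing adjoint-inconsistency remainder
\[
R = (\llbracket e \rrbracket, \{ \nbf\cdot\nabla (D^2 w - \Pizp D^2 w)\})_{0,\Fcaln}
 - (\llbracket \nabla e \rrbracket, \{ \nbf\cdot (D^2 w - \Pizp D^2 w)\})_{0,\Fcaln},
\]
which does not vanish because $\llbracket \un\rrbracket \neq 0$. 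Estimating $R$ is a substantive part of the paper's proof: it reuses the bounds on the inconsistency terms $T_1$ and $T_2$ from the energy-norm analysis with $s=2$, together with~\eqref{H4-stability}, to show $R \lesssim (\h/\p)^2\Norm{e}_{\dG}\Norm{e}_{0,\Omega}$. Your proof omits this term entirely, so the argument as written does not close.

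A secondary issue is your appeal to ``consistency of the scheme gives $\Bn(e,\vn)=0$ for all $\vn\in\Vn$.'' The scheme is \emph{not} consistent in that sense --- that is precisely why the error splitting~\eqref{error-splitting} in Theorem~\ref{theorem:convergence} carries an inconsistency term, and~\eqref{starting-point} computes $\Bn(u-\un,\xi)\neq 0$ explicitly. Orthogonality holds only against conforming test functions $\vn\in\Vn\cap H^2_0(\Omega)$, for which all jump terms in~\eqref{starting-point} vanish. Since $\Pcalp w$ is such a function, the particular instance you use, $\Bn(e,\Pcalp w)=0$, is in fact correct, but the justification should be the conformity and homogeneous boundary traces of $\Pcalp w$, not a blanket consistency claim.
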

\begin{proof}
Consider the auxiliary problem: find~$\Phi$ such that
\[
\begin{cases}
\Delta^2 \Phi = u-\un   & \text{in } \Omega,\\
u=0, \quad \nbfOmega \cdot \nabla u =0 & \text{on }\partial\Omega.
\end{cases}
\]
In weak formulation, this problem reads
\begin{equation} \label{auxiliary-problem:L2:AubinNitsche}
\begin{cases}
\text{find } u \in V \text{ such that}\\
B(u,v) = (u-\un,v)_{0,\Omega} \qquad \forall v\in V.
\end{cases}
\end{equation}
Due to the convexity of~$\Omega$, the solution~$\Phi$ to~\eqref{auxiliary-problem:L2:AubinNitsche} belongs to~$H^4(\Omega)$ and the following regularity estimate holds true:
\begin{equation}\label{H4-stability}
\Norm{\Phi}_{4,\Omega} \lesssim \Norm{u-\un}_{0,\Omega};
\end{equation}
see, e.g., \cite{Grisvard:2011} and~\cite[Theorem~$6$, page~$182$]{Kozlov-Mazya:1996}
for the two and three dimensional cases, respectively.

Using~\eqref{auxiliary-problem:L2:AubinNitsche},
splitting the integrals elementwise,
and integrating by parts imply
\[
\Norm{u-\un}_{0,\Omega}^2
= (u-\un, \Delta^2 \Phi)_{0,\Omega}
= \sum_{\E \in \taun} \left(
      -(\nabla(u-\un), \nabla \Delta \Phi)_{0,\E}
      + (u-\un, \nabla \Delta \Phi \cdot \nbfE)_{0,\partial \E}  \right).
\]
Integrating by parts, and
using~\eqref{lifting-operator} and the smoothness of~$\Phi$ give
\[
\begin{split}
\Norm{u-\un}_{0,\Omega}^2
& = \sum_{\E\in\taun} (D^2(u-\un),D^2\Phi)_{0,\E}
    +(\llbracket u-\un \rrbracket, \{ \nabla \Delta \Phi \cdot \nbf  \})_{0,\Fcaln}\\
& \quad + (\llbracket \Phi \rrbracket, \{ \nabla \Delta (u-\un) \cdot \nbf  \})_{0,\Fcaln}
     - (\llbracket \nabla(u-\un) \rrbracket, \{ D^2 \Phi \cdot \nbf  \})_{0,\Fcaln}\\
& \quad - (\llbracket \nabla \Phi \rrbracket, \{ D^2 (u-\un) \cdot \nbf  \})_{0,\Fcaln}
   + ( \sigmabold^{\frac12} \llbracket \un \rrbracket,  \llbracket \vn \rrbracket)_{0,\Fcaln}
    + (\taubold^{\frac12} \llbracket \nabla\un \rrbracket, \llbracket \nabla\vn \rrbracket)_{0,\Fcaln}.
\end{split}
\]
As, e.g., in~\cite[Theorem~$4.5$]{Georgoulis-Lasis:2006},
we write the right-hand side as a combination of~$\Bn(u-\un,\Phi)$ and a reminder~$R$.
Standard computations reveal that
\[
\begin{split}
& R = \left[ (\llbracket u-\un \rrbracket, \{ \nabla\Delta \Phi\cdot \nbf  \})_{0,\Fcaln}
    - (\llbracket \nabla(u-\un) \rrbracket, \{ D^2 \Phi \cdot \nbf  \})_{0,\Fcaln}
    - (\Lcal(u-\un),\Pizp \Dnt \Phi )_{0,\Omega}  \right]\\
&  \!+\!
    \left[ (\llbracket \Phi \rrbracket, \{ \nabla\Delta (u-\un) \cdot \nbf  \})_{0,\Fcaln}
    \!-\! (\llbracket \nabla \Phi \rrbracket, \{ D^2 (u-\un) \cdot \nbf  \})_{0,\Fcaln}
    \!-\! (\Lcal \Phi, \Pizp(\Dnt (u-\un)))_{0,\Omega}  \right]\\
& = A + B.
\end{split}
\]
Due to the regularity of~$\Phi$, we have~$B=0$.
We deduce
\begin{equation} \label{B-R}
\Norm{u-\un}_{0,\Omega}^2
=\Bn(u-\un,\Phi) + R,
\end{equation}
where
\[
\begin{split}
R
& = (\llbracket u-\un \rrbracket,
\{ \nbf \cdot \nabla (D^2 \Phi-\Pizp D^2 \Phi ) \})_{0,\Fcaln}
    - (\llbracket \nabla(u-\un) \rrbracket,
    \{ \nbf \cdot (D^2 \Phi - \Pizp D^2 \Phi ) \})_{0,\Fcaln}.
\end{split}
\]
Thanks to the smoothness of~$\Pcalp \Phi$,
where~$\Pcalp$ is as in Theorem~\ref{theorem:approximation-tensor-product-meshes},
we can write
\[
\Bn(u-\un,\Phi)
= \Bn(u-\un,\Phi-\Pcalp\Phi).
\]
In fact, $\Lcal(\Pcalp \Phi)=0$.
Using the continuity property~\eqref{continuity&coercivity:bf},
the smoothness and the homogeneous essential boundary conditions of~$\Phi-\Pcalp\Phi$,
the approximation properties of~$\Pcalp$ in~\eqref{hp-H2-approximation},
and the stability property~\eqref{H4-stability},
we deduce
\begin{equation} \label{dual-estimate:B}
\begin{split}
\Bn(u-\un,\Phi)
&   \lesssim \Norm{u-\un}_{\dG}
        \Norm{\Dnt (\Phi-\Pcalp\Phi)}_{0,\Omega}
    \lesssim \left(\frac{\h}{\p}\right)^2
        \Norm{u-\un}_{\dG} \Norm{\Phi}_{4,\Omega}\\
&   \lesssim \left(\frac{\h}{\p}\right)^2
        \Norm{u-\un}_{\dG} \Norm{u-\un}_{0,\Omega}.
\end{split}
\end{equation}
We are left with estimating the remainder~$R$ on the right-hand side of~\eqref{B-R}.
To this aim, we proceed as in the proof of Theorem~\ref{theorem:convergence}.
More precisely, we apply the estimates~\eqref{T1} and~\eqref{T2} with~$s=2$
and use the stability bound~\eqref{H4-stability} to deduce
\begin{equation} \label{dual-estimate:R}
R
\lesssim \Norm{u-\un}_{\dG}
            \left(\frac{\h}{\p}\right)^2 \Norm{\Phi}_{4,\Omega}
\lesssim \left(\frac{\h}{\p}\right)^2
            \Norm{u-\un}_{\dG}  \Norm{u-\un}_{0,\Omega}.
\end{equation}
The assertion follows combining~\eqref{dual-estimate:B} and~\eqref{dual-estimate:R} in~\eqref{B-R}.
\end{proof}

\begin{remark} \label{remark:no-convexity}
As customary, the convexity assumption in Theorem~\ref{theorem:L2-convergence} can be relaxed by demanding that
the domain~$\Omega$ implies $H^4$ regularity on the solution of the auxiliary problem~\eqref{auxiliary-problem:L2:AubinNitsche}.
\eremk
\end{remark}

\subsection{Optimal convergence on triangular meshes}
\label{subsection:optimal-convergence-triangle}
In this section, we provide the guidelines to extend
Theorems~\ref{theorem:convergence} and~\ref{theorem:L2-convergence}
to the case of triangular meshes.
As in the analysis above,
$\h\p$-optimal convergence estimates follow from
the existence of an $H^2$ conforming polynomial interpolant
over a given triangular mesh with $\h\p$-optimal approximation properties.
We undertake a different approach compared to the case of tensor product-type meshes
as the arguments therein employed do not apply on simplicial meshes.

In~\cite{Suri:1990}, Suri constructed an interpolant~$\Ibs$ of $H^2$ functions
into the space of global $H^2$ piecewise  polynomials on triangular meshes.
Such an interpolant satisfies the following approximation properties:
there exists a positive constant~$c$ only depending on~$\taun$ such that
\begin{equation} \label{BS-H2}
\SemiNorm{u-\Ibs u}_{\ell,\Omega}
\le c p^{-(k-\ell)}  \SemiNorm{u}_{k,\Omega}
\qquad \qquad \ell =0,1,2.
\end{equation}
The interpolant~$\Ibs$ is constructed generalizing to the $H^2$ case
the techniques for the $H^1$ case
developed by Babu\v ska and Suri in~\cite{Babuska-Suri:1987}.
First, a polynomial with $p$-optimal approximation properties is constructed on each triangle.
Then, global $H^2$ regularity is recovered
using an $H^2$ stable lifting operator,
which preserves essential polynomial conditions on the boundary of a triangle.
So, $p$-optimal error bounds of the IPDG on triangular meshes for the biharmonic problem
follow using~\eqref{BS-H2} in the analysis of
Section~\ref{subsection:optimal-tensor-product}.

We report the convergence results for triangular meshes.
\begin{theorem} \label{theorem:convergence-triangular-meshes}
Let~$\taun$ be a quasi-uniform triangular mesh as in Section~\ref{subsection:meshes}.
Let~$u$ and~$\un$ be the solutions to~\eqref{weak-formulation} and~\eqref{dG}, respectively.
Assume that~$u\in H^{s+2}(\Omega)$, $s:=\min(k,\p-1)$, $\p \ge 2$.
Then, there exists a positive constant~$c$
independent of~$\h$ and~$\p$
such that
\[
\Norm{u-\un}_{\dG}
\le c \left(\frac{\h}{\p}\right)^{s} \Norm{u}_{s+2,\Omega},
\qquad\qquad
\Norm{u-\un}_{0,\Omega}
\le c \left(\frac{\h}{\p} \right)^2 \Norm{u-\un}_{\dG}.
\]
In other words, for homogeneous essential boundary conditions,
method~\eqref{dG} is $\h\p$-optimal in the energy and~$L^2$ norms
also on triangular meshes.
\end{theorem}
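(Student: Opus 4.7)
The plan is to repeat verbatim the arguments of Theorem~\ref{theorem:convergence} (for the energy-norm estimate) and Theorem~\ref{theorem:L2-convergence} (for the $L^2$ estimate), with the single structural substitution of the global $H^2$ projector~$\Pcalp$ from Theorem~\ref{theorem:approximation-tensor-product-meshes} by Suri's interpolant~$\Ibs$, whose approximation properties are recorded in~\eqref{BS-H2}. Since $\Ibs u \in H^2(\Omega) \cap \Vn$ and since Suri's construction preserves polynomial essential boundary data (in particular the homogeneous conditions satisfied by~$u$), picking $\vn = \Ibs u$ in the best-approximation part of the Strang-type splitting~\eqref{error-splitting} makes all jump contributions on interior and boundary faces vanish, exactly as in the tensor-product case.

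For the energy-norm bound I would first use~\eqref{BS-H2} to control $\Norm{u-\Ibs u}_{\dG}$ by $(\h/\p)^s \Norm{u}_{s+2,\Omega}$. Then, for the inconsistency term I would follow the chain of manipulations from~\eqref{starting-point} through~\eqref{second-term-Strang}: these only involve the elementwise $L^2$ projector~$\Pizp$, polynomial inverse trace estimates, and a bulk-to-edge $L^2$-projection bound in the spirit of Lemma~\ref{lemma:L2bulk:L2edge}, each of which admits a triangular analogue on shape-regular meshes. Assembling the pieces as in the tensor-product proof yields $\Norm{u-\un}_{\dG} \lesssim (\h/\p)^s \Norm{u}_{s+2,\Omega}$.

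For the $L^2$-norm bound I would replicate the Aubin--Nitsche argument of Theorem~\ref{theorem:L2-convergence} around the auxiliary problem~\eqref{auxiliary-problem:L2:AubinNitsche}. Convexity of~$\Omega$ still delivers the $H^4$ lift~\eqref{H4-stability} for the dual solution~$\Phi$, and applying~\eqref{BS-H2} with $k=4$ to $\Phi - \Ibs\Phi$ produces the factor $(\h/\p)^2$ in the dominant term $\Bn(u-\un,\Phi)$. The remainder~$R$ is controlled by the same bulk/edge projection bounds used to derive~\eqref{T1} and~\eqref{T2}, now specialised to $s=2$, combined with~\eqref{H4-stability}.

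The main obstacle is bookkeeping rather than novel analysis: unlike~$\Pcalp$, Suri's interpolant is not fully explicit in the Sobolev regularity index~$k$ (its constant depends on~$k$), so the triangular estimates are $\h\p$-optimal in the polynomial degree but not $k$-explicit, consistently with what is anticipated in Remark~\ref{remark:mixed-order Sobolev spaces}. A minor technical difference is that the $L^2$ projector on a triangle does not commute with directional derivatives, so the decomposition~\eqref{Z1-Z2}--\eqref{Z2} cannot be reproduced verbatim; instead, one may absorb the corresponding contribution by a single polynomial inverse estimate, at the cost of at most one extra factor of~$\p$ that is safely accommodated by the half-order margin already available in~\eqref{second-term-Strang}.
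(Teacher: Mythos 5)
Your proposal follows the paper's own route: the paper establishes this theorem precisely by substituting Suri's interpolant~$\Ibs$ (with the bound~\eqref{BS-H2}) for~$\Pcalp$ in the arguments of Theorems~\ref{theorem:convergence} and~\ref{theorem:L2-convergence}, and your remarks on $H^2$-conformity, preservation of essential polynomial boundary data, and the $k$-implicit constants match what the paper records in Remarks~\ref{remark:H^2-BS} and~\ref{remark:mixed-order Sobolev spaces}. One caution on your closing paragraph: the margin available in~\eqref{second-term-Strang} is only~$\p^{-1/2}$, so losing a genuinely \emph{extra} factor of~$\p$ there would destroy $\p$-optimality; the safe route is to retain the $Z_1+Z_2$ splitting of~\eqref{Z1-Z2} with simplicial analogues of Lemma~\ref{lemma:L2bulk:L2edge} and of the (half-order suboptimal) $H^1$-bound for the $L^2$ projector, which costs only the same $\p^{1/2}$ that the tensor-product proof already pays.
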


\begin{remark} \label{remark:H^2-BS}
The $p$-optimal interpolant in \eqref{BS-H2} is derived based on the trigonometric polynomials approximation theory.
The error bound is $p$-optimal with respect to the Sobolev index~$k$
but contains unknown constants depending on $k$;
this is due to the use of the Stein's extension operator for Lipschitz domains~\cite{Stein:1970}.
The corresponding bound cannot be used to derive exponential convergence for the $hp$-version of Galerkin methods
but is enough to study $p$-optimal algebraic convergence.
So, one of the reasons why we developed the theory of Section~\ref{section:hp-polynomial}
is that the $\h\p$-optimal error estimates are explicit in the Sobolev index~$k$ on tensor product-type meshes.
\eremk
\end{remark}

\begin{remark}\label{3D}
To the best of our knowledge, so far,
the Suri-type $p$-optimal interpolation operator in~\eqref{BS-H2}
can be constructed in two dimensions only.
There are two main reasons for this fact:
(\emph{i}) an explicit construction of general order $H^2$ conforming piecewise polynomial spaces on tetrahedral meshes
is still missing;
(\emph{ii}) $H^2$ stable lifting operators for tetrahedra are also unknown.
Notwithstanding, it is possible to use $H^1$-conforming $\h\p$-optimal polynomial interpolation operator on tetrahedral meshes.
The resulting error bound for the IPDG is then suboptimal only by half an order of~$p$,
thus improving the expected one and a half an order suboptimality.
\eremk
\end{remark}

\begin{remark}\label{remark:H^2-polynomial-lifting}
There exist at least other two different constructions for $H^2$ stable lifting operators.
In~\cite{Lederer-Schoeberl:2018}, an $H^2$ stable lifting operator has been derived to prove the $p$-robustness of the inf-sup constant
in H(div)-FEM for the Stokes problem in two dimensions.
In~\cite[Theorem~$2.2$]{Ainsworth-Parker:2020},
the existence of an $H^2$ stable lifting operator was shown,
which preserves essential polynomial conditions on the boundary of a triangle
to design preconditioners for high-order $H^2$ FEM on triangular meshes~\cite{Ainsworth-Parker:2021}.
\eremk
\end{remark}

\section{Convergence for other IPDG formulations of the biharmonic problem} \label{section:variants}

In this section, we discuss other IPDG formulations that can be proven to be $\h\p$-optimal
using similar techniques to those of Section~\ref{section:convergence}.
\medskip

\subsection{$\mathcal{C}^0$-IPDG for the biharmonic problem}
In this section, we consider $\mathcal C^0$-IPDG for the biharmonic problem;
see~\cite{Engel-Garikipati-Hughes-Larson-Mazzei-Taylor:2002,Brenner-Sung:2005}.
Given the space~$\Vn$ of piecewise continuous polynomials over a mesh~$\taun$ as in Section~\ref{subsection:meshes},
we introduce the lifting operator~$\Lcal: \Vn \oplus V \to [\Vn]^{d\times d}$ as
\[
\int_{\Omega} \Lcal(\un) : \vbfn
:=  - \int_{\Fcaln} (\{ (\vbfn) \nbf \} \cdot \nbf) \; (\llbracket \nabla\un \rrbracket \cdot  \nbf)
\qquad  \forall \vbfn \in [\Vn]^{d\times d}
\]
and the discrete bilinear form
\begin{equation}\label{C0IP-DG}
\begin{split}
\Bn(\un,\vn)
& := \int_\Omega \Dnt \un : \Dnt \vn
  + \int_{\Omega} \left( \Lcal(\un):\Dnt\vn + \Lcal(\vn) :\Dnt \un    \right)\\
& \quad  + \int_{\Fcaln}
        \taubold (\llbracket \nabla\un \rrbracket \cdot  \nbf)  (\llbracket \nabla\vn \rrbracket \cdot \nbf)
\qquad \forall \un,\vn\in\Vn.
\end{split}
\end{equation}
With an abuse of notation,
we denote the norm induced by the bilinear form in~\eqref{C0IP-DG} by~$\Norm{\cdot}_{\dG}$.

The bilinear form~$\Bn(\cdot,\cdot)$ is equal to that in~\eqref{bilinear-form}
without the scalar jump terms.
Moreover, the jump of the gradients appear only along the normal direction to the faces.
Thus, $p$-optimal error estimates follow as
(and are in fact easier to prove than) in Section~\ref{section:convergence}.

For completeness, we report the convergence result.
\begin{theorem} \label{theorem:convergence-C0-IPDG}
Let~$\taun$ be either a quasi-uniform triangular mesh,
or a quasi-uniform tensor product-type mesh of either parallelograms (2D) or parallelepipeds (3D)
as in Section~\ref{subsection:meshes}.
Let~$u$ and~$\un$ be the solutions to~\eqref{weak-formulation} and the corresponding $\mathcal C^0$-IPDG, respectively.
Assume that~$u\in H^{s+2}(\Omega)$, $s:=\min(k,\p-1)$, $\p \ge 2$.
Then, there exists a positive constant~$c$
independent of~$\h$, and~$\p$
such that
\[
\Norm{u-\un}_{\dG}
\le c \left(\frac{\h}{\p}\right)^{s} \Norm{u}_{s+2,\Omega},
\qquad\qquad
\Norm{u-\un}_{0,\Omega}
\le c \left(\frac{\h}{\p} \right)^2 \Norm{u-\un}_{\dG}.
\]
\end{theorem}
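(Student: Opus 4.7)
The plan is to transplant the arguments of Theorems~\ref{theorem:convergence} and~\ref{theorem:L2-convergence} essentially verbatim, exploiting two simplifications of the $\mathcal{C}^0$ setting: since $\vn \in \Vn$ is continuous and vanishes on $\partial\Omega$ (the essential boundary condition is imposed strongly in the $\mathcal{C}^0$ space), one has $\llbracket \vn \rrbracket \equiv 0$ on $\Fcaln$; moreover, only the normal component of $\llbracket \nabla \vn \rrbracket$ survives and enters the lifting and the stabilization.

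First, I would use the same Strang-type splitting as in~\eqref{error-splitting}, now with the $\Ccal^0$-IPDG bilinear form~\eqref{C0IP-DG} and its induced norm. For the best-approximation term, I would again take $\vn := \Pcalp u$ on tensor product-type meshes (Theorem~\ref{theorem:approximation-tensor-product-meshes}) and $\vn := \Ibs u$ on triangular meshes (estimate~\eqref{BS-H2}). Both operators produce globally $H^2$-conforming, hence $\Ccal^0$-conforming, piecewise polynomials, and both preserve homogeneous essential boundary conditions; consequently $\vn \in \Vn$, every face contribution in $\Norm{\cdot}_{\dG}$ vanishes, and the bulk Hessian term delivers the $(\h/\p)^s$ rate.

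Second, for the inconsistency term I would repeat the manipulations from~\eqref{starting-point}--\eqref{T1+T2}. In the $\Ccal^0$ setting the analogue of $A_1$ disappears identically because $\llbracket \xi \rrbracket \equiv 0$; only the $A_2$-type contribution survives, and even there one only needs to control the normal-normal component $\{\nbf\cdot(\Dnt u - \Pibfzp \Dnt u)\nbf\}$, producing a strict subset of the estimate~\eqref{T2}. The same argument therefore gives the bound $\p^{-1/2}(\h/\p)^s \Norm{u}_{s+2,\Omega}$, and summing with the approximation term yields the claimed energy estimate. For the $L^2$ estimate I would mimic Theorem~\ref{theorem:L2-convergence} through the dual problem~\eqref{auxiliary-problem:L2:AubinNitsche}; the remainder~$R$ in~\eqref{B-R} simplifies further because its first bracket, involving $\llbracket u-\un\rrbracket$, drops, and the surviving term is controlled as in~\eqref{dual-estimate:R}.

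The only point worth checking carefully is that $\Pcalp u$ and $\Ibs u$ indeed land in the $\Ccal^0$-IPDG space $\Vn$; this is automatic since both were constructed $H^2$-conformingly with prescribed polynomial boundary traces, so in particular they are $H^1_0$-conforming. I do not anticipate a genuine obstacle: the $\Ccal^0$ analysis is strictly easier than the fully discontinuous one of Section~\ref{section:convergence}, and no new polynomial approximation result beyond Theorem~\ref{theorem:approximation-tensor-product-meshes} (respectively~\eqref{BS-H2}) is required.
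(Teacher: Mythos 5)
Your proposal is correct and follows essentially the same route as the paper, which proves Theorem~\ref{theorem:convergence-C0-IPDG} precisely by observing that the $\Ccal^0$-IPDG bilinear form is the one of~\eqref{bilinear-form} without the scalar jump terms and with only normal gradient jumps, so the arguments of Section~\ref{section:convergence} apply with the terms involving $\llbracket \xi \rrbracket$ dropping out. Your additional check that $\Pcalp u$ (resp.\ $\Ibs u$) lies in the $\Ccal^0$-conforming space is the right point to verify and is indeed automatic from the $H^2$-conformity and preservation of homogeneous essential boundary conditions.
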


\subsection{Divergence-conforming DG methods for the stream formulation of the Stokes problem in two dimensions}
In this section,
we discuss how to extend the optimal results of Section~\ref{section:convergence}
to divergence-conforming DG methods for
the stream formulation of the Stokes problem \emph{in two dimensions;}
see~\cite{Kanschat-Sharma:2014,Wang-Wang-Ye:2009}.

We start with the standard formulation of the Stokes problem in two dimensions:
given a polygonal domain~$\Omega \subset \Rbb^2$
and~$\fbf \in [L^2(\Omega)]^2$,
we seek~$\ubftilde \in [H^1_0(\Omega)]^2$
and~$\rho \in L^2_0(\Omega)$ that solve weakly
\begin{equation} \label{Stokes:standard-formulation}
\begin{cases}
-\Delta \ubftilde + \nabla \rho = \fbf & \text{in } \Omega, \\
\nabla\cdot \ubftilde =0            & \text{in } \Omega, \\
\ubftilde = \zerobf                 & \text{on } \partial \Omega.
\end{cases}
\end{equation}
The velocity~$\ubftilde$ is divergence free.
So, there exists~$\psi \in H^2_0(\Omega)$ such that
$\ubftilde = \nabla \times \psi$.

Introduce the Stokes kernel
\[
\Zbf:=
\left\{ 
    \vbf \in [H^1_0(\Omega)]^2 \ \middle| \
            \div \vbf = 0 \right\}.
\]
Testing the first equation in~\eqref{Stokes:standard-formulation}
with~$\vbftilde \in \Zbf$ gives
\[
(\nabla \ubftilde,\nabla\vbftilde)_{0,\Omega}
= (\nabla \ubftilde,\nabla\vbftilde)_{0,\Omega}
+ (\nabla \rho, \vbftilde)_{0,\Omega}
= (\fbf, \vbftilde)_{0,\Omega}.
\]
Since~$\vbftilde \in \Zbf$,
there exists~$\phi \in H^2_0(\Omega)$ such that
$\vbftilde = \nabla \times \phi$.
We deduce the stream formulation of the Stokes problem in~\eqref{Stokes:standard-formulation}:
\begin{equation} \label{weak-formulation-stream}
(\nabla \ (\nabla \times \psi), \nabla \ (\nabla \times \phi))_{0,\Omega}
= (\fbf, \nabla \times \phi)_{0,\Omega} \qquad \forall \phi \in H^2_0(\Omega).
\end{equation}
This is a weak formulation of the biharmonic problem~\eqref{strong-formulation}
which is alternative to that in~\eqref{weak-formulation}.
We can rewrite the right-hand side of~\eqref{weak-formulation-stream} integrating by parts:
\[
(\fbf, \nabla \times \phi)_{0,\Omega} = (\nabla \times \fbf, \phi)_{0,\Omega}.
\]
The strong formulation corresponding to~\eqref{weak-formulation-stream} reads as follow:
we seek~$\psi \in H^2_0(\Omega)$ satisfying
\[
\Delta^2 \psi = \nabla \times \fbf \text{ in }\Omega.
\]
We derive the $\mathcal{C}^0$-IPDG formulation of this problem
using standard dG techniques;
see, e.g., \cite{Kanschat-Sharma:2014}.
On piecewise polynomial spaces,
we define the discrete bilinear form
\[
\begin{split}
\Bn(\psi_n,\phi_n)
& := \int_{\taun} (\nabla \times \nabla \psi_n) : (\nabla\times \nabla \phi_n)
   - \int_{\Fcaln} \{ \nabla\times\nabla\psi_n \} \cdot \llbracket \nabla\phi_n \times \nbf \rrbracket\\
& \quad    - \int_{\Fcaln} \{ \nabla\times\nabla\phi_n \} \cdot \llbracket \nabla\psi_n \times \nbf \rrbracket
   + \int_{\Fcaln} \taubold \llbracket \nabla\psi_n \times \nbf \rrbracket \cdot \llbracket \nabla \phi_n \times \nbf \rrbracket
\qquad \forall \psi_n, \phi_n \in\Vn.
\end{split}
\]
With an abuse of notation,
we denote the norm induced by the bilinear form in~\eqref{C0IP-DG} by~$\Norm{\cdot}_{\dG}$.

Following the same steps as in Section~\ref{section:convergence},
$\h\p$-optimal error estimates can be proved.
In fact, the problem is 
the bilinear form appearing in $\mathcal C^0$-IPDG \eqref{C0IP-DG} with rotated components.

For completeness, we report the convergence result.
\begin{theorem} \label{theorem:convergence-Stokes}
Let~$\taun$ be either a quasi-uniform triangular mesh,
or a quasi-uniform tensor product-type mesh of either parallelograms (2D) or parallelepipeds (3D)
as in Section~\ref{subsection:meshes}.
Let~$\psi$ and~$\psi_n$ be the solutions to~\eqref{weak-formulation-stream} and the corresponding $\mathcal C^0$-IPDG, respectively.
Assume that~$\psi \in H^{s+2}(\Omega)$, $s:=\min(k,\p-1)$, $\p \ge 2$.
Then, there exists a positive constant~$c$
independent of~$\h$, and~$\p$
such that
\[
\Norm{\psi-\psi_n}_{\dG}
\le c \left(\frac{\h}{\p}\right)^{s} \Norm{\psi}_{s+2,\Omega},
\qquad\qquad
\Norm{\psi-\psi_n}_{0,\Omega}
\le c \left(\frac{\h}{\p} \right)^2 \Norm{\psi-\psi_n}_{\dG}.
\]
\end{theorem}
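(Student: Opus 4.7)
The plan is to reduce Theorem~\ref{theorem:convergence-Stokes} to Theorem~\ref{theorem:convergence-C0-IPDG} by exploiting the algebraic identity between $\nabla(\nabla\times\psi)$ and $D^2\psi$ in two dimensions. Writing $\nabla\times\psi = (\partial_y\psi,-\partial_x\psi)^T$, we have $\nabla(\nabla\times\psi) = \bigl[\begin{smallmatrix} \partial_{xy}\psi & -\partial_{xx}\psi \\ \partial_{yy}\psi & -\partial_{yx}\psi \end{smallmatrix}\bigr]$, which is a column-permutation with a sign flip of the Hessian $D^2\psi$; in particular the Frobenius norms agree pointwise, so $\|\nabla(\nabla\times\psi)\|_{0,D} = \|D^2\psi\|_{0,D}$ on every subdomain $D$. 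Analogous identities hold for traces, e.g. $\nabla\psi\times\nbf$ corresponds (up to signs/components) to $\nbf\cdot\nabla\psi$, so the jump and flux terms of the Stokes dG form match, entry by entry, those of the $\mathcal{C}^0$-IPDG form in~\eqref{C0IP-DG}, with $\nabla\times\nabla$ playing the role of $D^2$.

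First, I would record this isometry rigorously: define an orthogonal bijection $\mathcal J$ on $\Rbb^{2\times 2}$ that sends $D^2 v$ to $\nabla(\nabla\times v)$ for any sufficiently smooth scalar $v$, and verify that $\mathcal J$ also maps the face quantities appearing in~\eqref{C0IP-DG} to those of the stream form, so that the two bilinear forms coincide after the change of unknowns. Consequently, the coercivity and continuity statements of Lemma~\ref{lemma:properties-bf}, with the choice~\eqref{dg-parameters}, transfer verbatim to the stream form, giving well-posedness and an error splitting identical to~\eqref{error-splitting}.

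Next, I would transport the approximation machinery of Section~\ref{section:hp-polynomial}. On tensor product meshes I apply $\Pcalp$ from Theorem~\ref{theorem:approximation-tensor-product-meshes} to $\psi$; on triangular meshes I use the Suri interpolant $\Ibs$ from~\eqref{BS-H2}. Since both preserve vertex values and normal derivatives and yield global $H^2$ approximants with $hp$-optimal bounds, the jumps $\llbracket \nabla(\psi-\Pcalp\psi)\times\nbf\rrbracket$ vanish on interior faces by $H^2$ conformity and on boundary faces by the homogeneous essential boundary conditions of $\psi$. Thus the best-approximation term in the error splitting is bounded by $(\h/\p)^s \Norm{\psi}_{s+2,\Omega}$ exactly as in~\eqref{first-term-Strang}.

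The inconsistency term is handled by transcribing the arguments of Theorem~\ref{theorem:convergence} (and, for the $L^2$ estimate, of Theorem~\ref{theorem:L2-convergence}) with $D^2$ replaced by $\nabla\times\nabla$ and the tensorial $L^2$ projector $\Pibfzp$ applied componentwise to $\nabla(\nabla\times\psi)$. Because the isometry $\mathcal J$ commutes with the componentwise $L^2$ projection onto $\Qbb_{p-1}$-type spaces, the bounds~\eqref{Z1}--\eqref{T2} carry over literally and produce the $\p^{-\frac12}(\h/\p)^s$ inconsistency estimate. The $L^2$ bound then follows by the Aubin--Nitsche argument of Theorem~\ref{theorem:L2-convergence}, using the biharmonic regularity of the auxiliary problem $\Delta^2\Phi = \psi-\psi_n$ with homogeneous boundary data and the $H^4$ shift~\eqref{H4-stability}. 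The main obstacle I anticipate is a purely bookkeeping one: checking that the tangential trace operator $\nabla\cdot\times\nbf$ matches the normal-gradient trace $\nbf\cdot\nabla\cdot$ used in~\eqref{C0IP-DG} under $\mathcal J$, including the correct signs and the correspondence of the penalty parameter $\taubold$; once this is verified on one interior and one boundary face, the rest of the proof is a transcription.
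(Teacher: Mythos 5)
Your proposal is correct and follows essentially the same route as the paper, which gives no detailed proof of this theorem but simply observes that the stream bilinear form is the $\mathcal C^0$-IPDG form~\eqref{C0IP-DG} with rotated components and that the arguments of Section~\ref{section:convergence} then carry over. Your explicit orthogonal map $\mathcal J$ and the trace/penalty bookkeeping are exactly the (unstated) details behind that remark.
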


\section{Numerical experiments} \label{section:numerical-experiments}
In this section, we validate the theoretical results of Section~\ref{section:convergence}.
More precisely,
in Section~\ref{subsection:numerics:inside-singularity},
we show $\p$-optimal convergence for a test case with internal point singularity
lying inside one element;
in Section~\ref{subsection:numerics:skeleton-singularity},
we test the convergence for a test case with internal point singularity lying on the skeleton
and observe a $\p$-optimal doubling of the convergence rate~\cite{Babuska-Suri:1987};
in Section~\ref{subsection:numerics:singular-boundary},
we consider a test case that is singular on the boundary of the domain,
i.e., a case not covered in the foregoing analysis,
and show a $p$-suboptimal rate of convergence.

\begin{figure}[ht]
\centering
 \includegraphics[height=1.5in,width=1.5in]{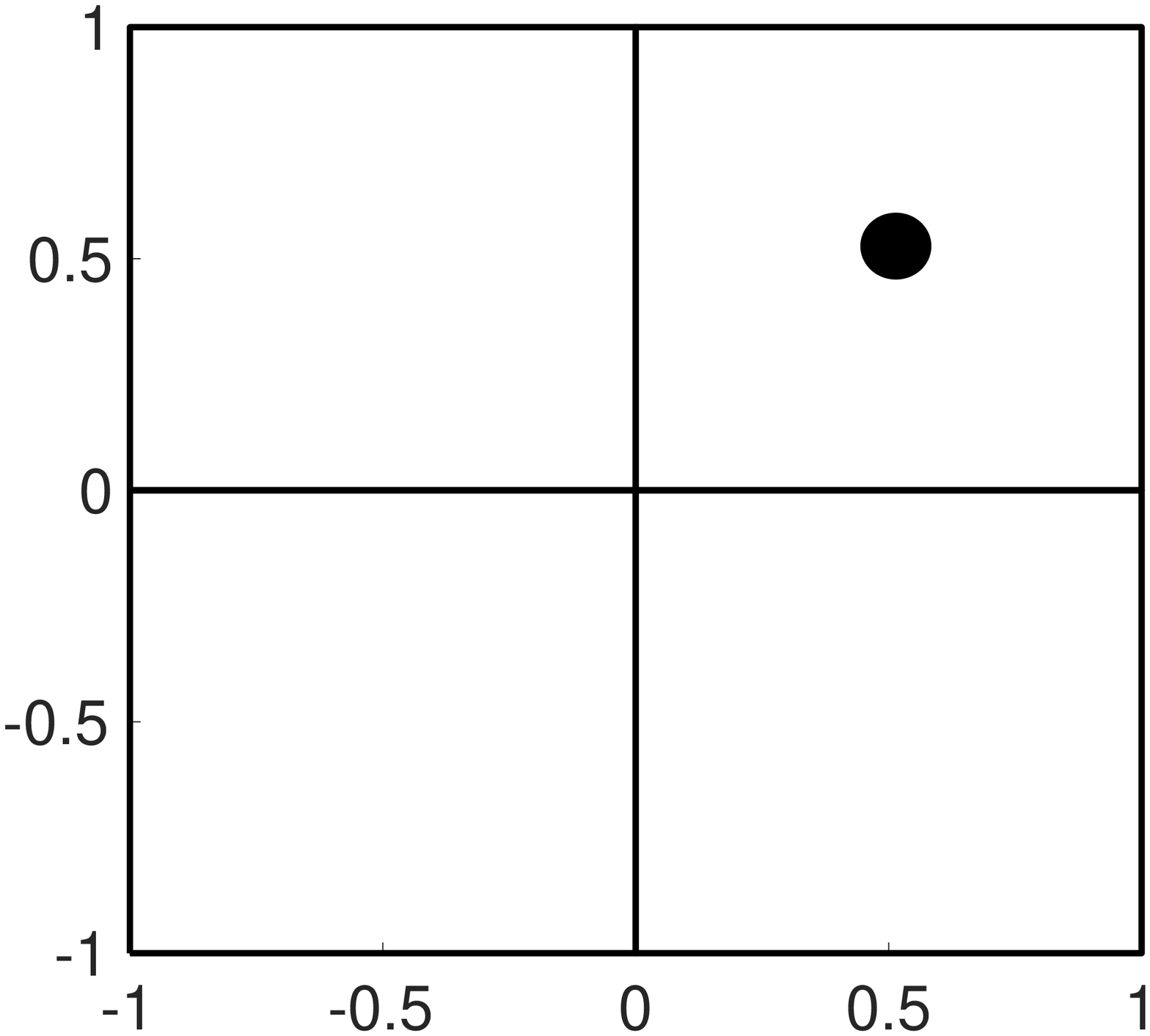}
 \qquad\qquad
 \includegraphics[height=1.5in,width=1.5in]{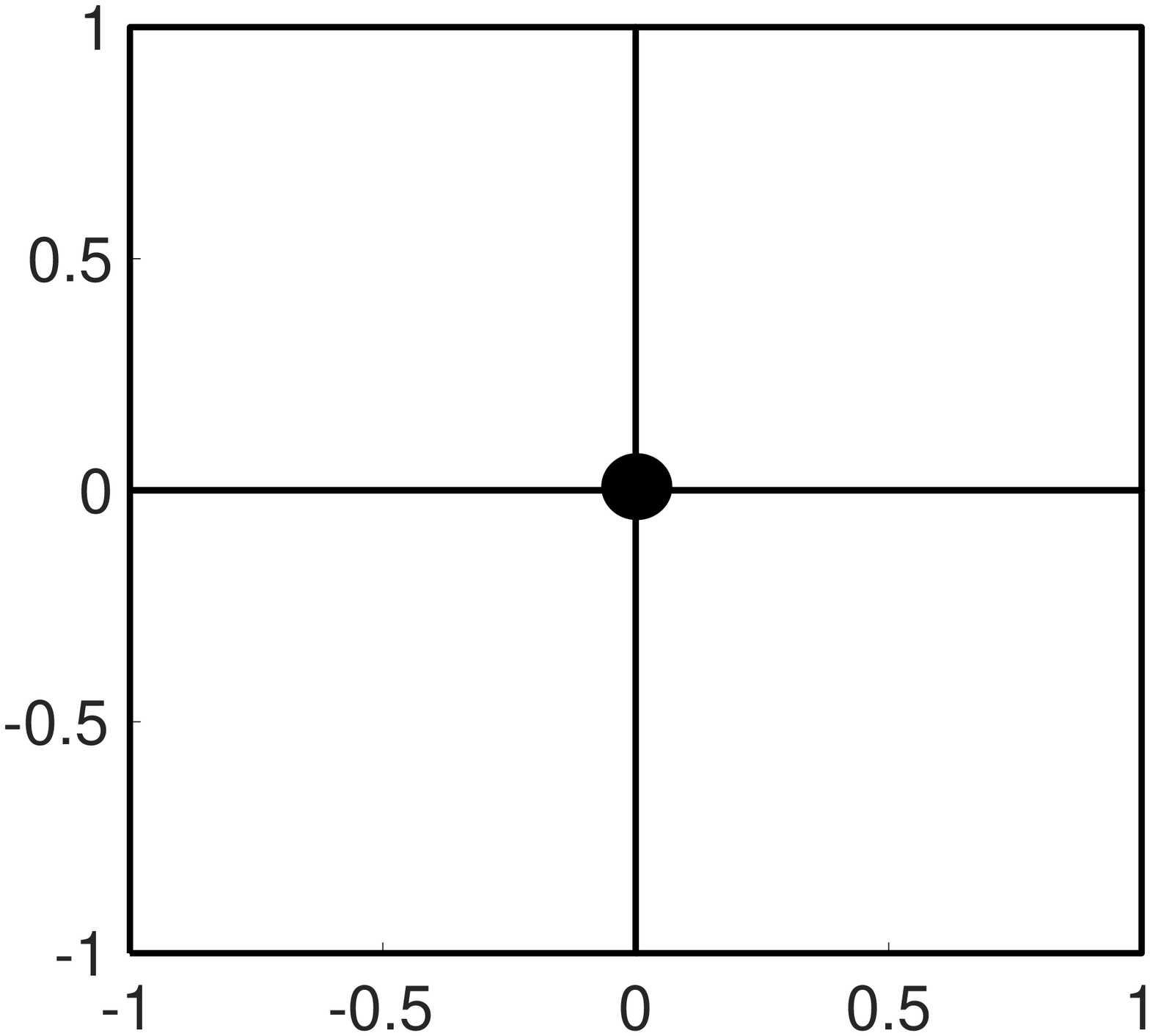}
 \qquad\qquad
 \includegraphics[height=1.5in,width=1.5in]{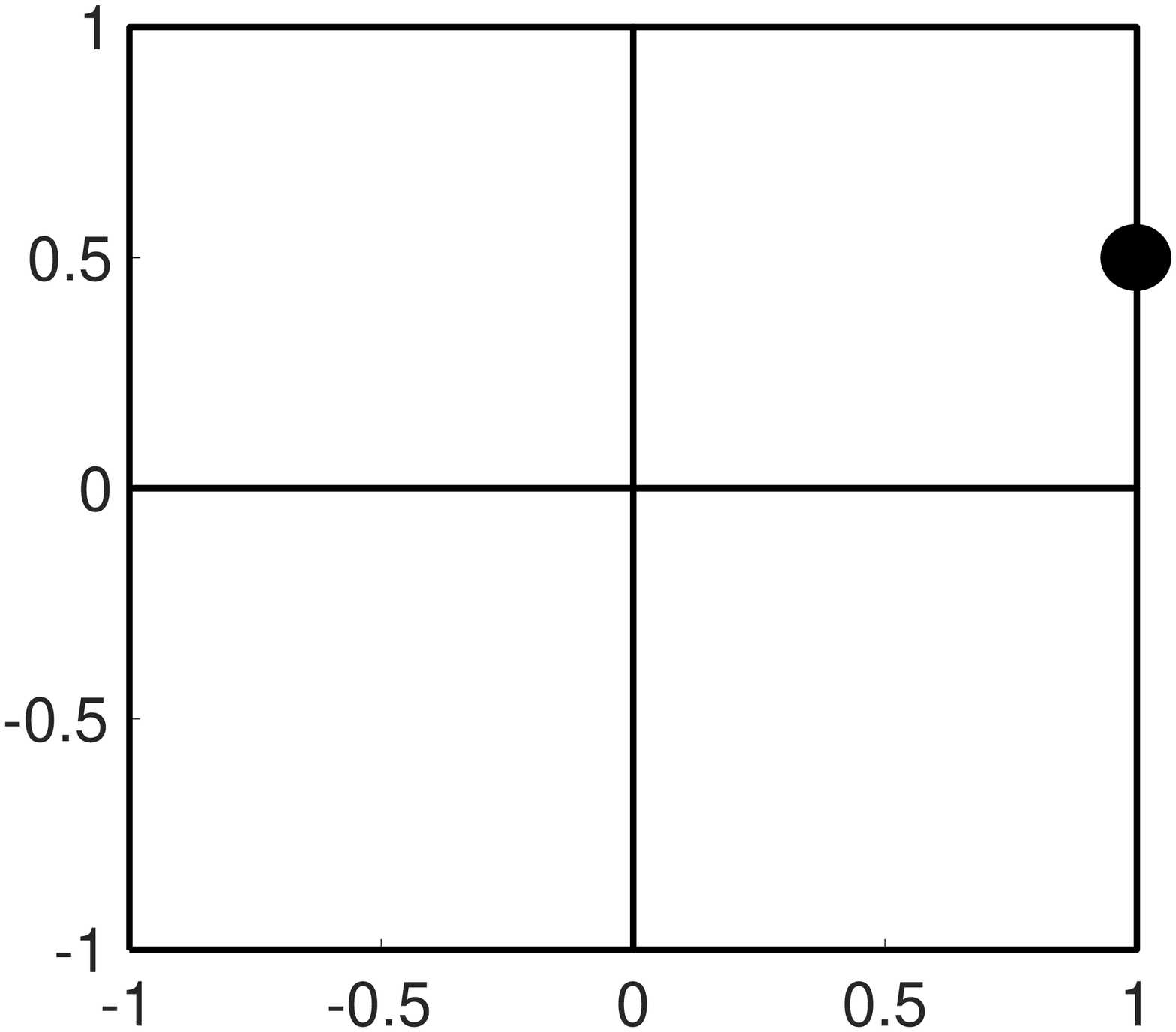}
\caption{The three different point singularities we shall consider.
\emph{Left panel}: point singularity inside one element.
\emph{Central panel}: point singularity at an internal mesh vertex.
\emph{Right panel}: point singularity on the boundary.}
 \label{fig:picture}
\end{figure}

Below, we consider the square domain~$\Omega:=(-1,1)^2$
and a given uniform Cartesian mesh of four elements
in Sections~\ref{subsection:numerics:inside-singularity},
\ref{subsection:numerics:skeleton-singularity},
and~\ref{subsection:numerics:singular-boundary},
and the L-shape domain
$\Omega := (-1,1)^2 \setminus [0,1) \times (-1,0]$
and a given uniform triangular mesh of 6 elements
in Section~\ref{subsection:numerics:L-shape}.

\subsection{Point singularity inside one element} \label{subsection:numerics:inside-singularity}
We consider the exact solution
\[
u_1(x,y):= \left((x-0.5)^2+(y-0.5)^2 \right)^{\frac{3}{2}}(1-x^2)(1-y^2).
\]
The function~$u_1$ has a point singularity at~$(0.5,0.5)$
that lies in the interior of an element of the mesh;
see the left panel in Figure~\ref{fig:picture}.
It belongs to~$H^{4-\varepsilon}(\Omega)$ for any arbitrarily small~$\varepsilon>0$.
We list the convergence rates under $p$-refinement in Table~\ref{ex1:convergence-internal}.
The results confirm the optimal error decay $\mathcal{O}(p^{-2})$ for the DG-norm error.

\begin{table}[!htb]
	\centering
		\begin{tabular}{|c|c|c||c|c|c|}
			\hline
			$p$  & $\Norm{u-u_n}_{\dG}$ & Convergence rate &  $p$  & $\Norm{u-u_n}_{\dG}$ & Convergence rate  \\
			\hline
			$2$ & 1.46E+01  &   & $3$ &  1.18E+01  &      \\
			\hline
			$4$ & 5.07E+00  & 1.52  & $5$ & 9.67E-01  &   4.90  \\
			\hline
			$6$ & 1.42E-01  & 8.83  & $7$ & 4.19E-02  &   9.33   \\
			\hline
			$8$ & 2.38E-02  &  6.18  & $9$ & 1.62E-02  &    3.79   \\
            \hline
			$10$ & 1.11E-02  & 3.41  & $11$ & 9.29E-03 &   2.77   \\
			\hline
            $12$ & 6.76E-03  & 2.73  & $13$ & 6.12E-03  &   2.49   \\
			\hline
			$14$ &  4.67E-03  & 2.40  & $15$ & 4.39E-03&   2.33   \\
			\hline
			$16$ & 3.48E-03  & 2.21  & $17$ & 3.33E-03  &   2.20   \\
			\hline
			$18$ & 2.73E-03  & 2.05  & $19$ & 3.33E-03  &  2.09   \\
            \hline
			$20$ & 2.23E-03  & 1.93  & $21$ & 2.16E-03  & 1.99  \\
            \hline
			$22$ & 1.84E-03  & 2.03  & $23$ & 1.80E-03  &   2.01   \\
            \hline
			$24$ & 1.55E-03  & 1.96  & $25$ & 1.53E-03  &   1.92   \\
			\hline
		\end{tabular}
	\caption{$\p$-convergence for the exact solution~$u_1$.}
	\label{ex1:convergence-internal}
\end{table}

\subsection{Point singularity at an internal mesh vertex} \label{subsection:numerics:skeleton-singularity}
We consider the exact solution
\[
u_2(x,y):= \left(x^2+y^2 \right)^{\frac{3}{2}}(1-x^2)(1-y^2).
\]
The function~$u_2$ has a point singularity at~$(0,0)$,
which lies at an internal vertex of the mesh;
see the central panel in Figure~\ref{fig:picture}.
It belongs to $H^{4-\varepsilon}(\Omega)$ for any arbitrarily small~$\varepsilon>0$.
We list the convergence rates under $p$-refinement
in Table~\ref{ex2:convergence-internal-vertex}
and observe a doubling rate of the error decay $\mathcal{O}(p^{-4})$ for the DG-norm error.
This phenomenon can be proven based on the optimal results in Section~\ref{section:convergence}
and the conforming finite element arguments
in~\cite{Babuska-Suri:1987,Suri:1990}.

\begin{table}[!htb]
	\centering
		\begin{tabular}{|c|c|c||c|c|c|}
			\hline
			$p$  & $\Norm{u-u_n}_{\dG}$ & Convergence rate &  $p$  & $\Norm{u-u_n}_{\dG}$ & Convergence rate  \\
			\hline
			$2$ & 5.16E+00  &   & $3$ & 4.39E+00  &      \\
			\hline
			$4$ & 2.98E+00  & 0.79  & $5$ & 9.61E-01  &   2.97  \\
			\hline
			$6$ & 8.64E-02  & 8.74  & $7$ & 1.08E-02  &   13.33   \\
			\hline
			$8$ & 6.22E-03  &  9.14  & $9$ & 3.84E-03  &   4.14   \\
            \hline
			$10$ & 2.47E-03  & 4.14  & $11$ & 1.67E-03 &   4.13   \\
			\hline
            $12$ & 1.18E-03  & 4.07  & $13$ & 8.52E-04  &   4.04   \\
			\hline
			$14$ &  6.34E-04  & 3.99  & $15$ & 4.84E-04  &   3.96   \\
			\hline
			$16$ & 3.76E-04  & 3.92  & $17$ & 2.90E-04  &   4.09   \\
			\hline
			$18$ & 2.31E-04  & 4.12  & $19$ & 1.87E-04  &   3.92   \\
            \hline
			$20$ & 1.53E-04  & 3.90  & $21$ & 1.27E-04  &   3.86   \\
            \hline
			$22$ & 1.05E-04  & 3.95  & $23$ & 8.82E-05  &   4.01   \\
            \hline
			$24$ & 7.44E-05  & 3.99  & $25$ & 6.28E-05  &   4.06   \\
			\hline
		\end{tabular}
	\caption{$\p$-convergence for the exact solution~$u_2$.}
	\label{ex2:convergence-internal-vertex}
\end{table}

\subsection{Point singularity on the boundary for the square domain} \label{subsection:numerics:singular-boundary}
We consider the exact solution
\[
u_3(x,y):= \left((x-1)^2+(y-0.5)^2 \right)^{\frac{3}{2}}.
\]
The function~$u_3$ has a point singularity at~$(1,0.5)$,
which lies on the boundary of~$\Omega$;
see the right panel in Figure~\ref{fig:picture}).
In particular, it belongs to $H^{4-\varepsilon}(\Omega)$ for any arbitrarily small~$\varepsilon>0$.
We list the convergence rates under $p$-refinement in Table~\ref{ex3:convergence rate for boundary of the domain ihomogeneous}
and observe a suboptimal by~$1.5$ order rate $\mathcal{O}(p^{-\frac{1}{2}})$ for the DG-norm error.
This suboptimality is the counterpart
of what is observed in~\cite{Georgoulis-Hall-Melenk:2010}
for elliptic partial differential equations of second order.

\begin{table}[!htb]
	\centering
		\begin{tabular}{|c|c|c||c|c|c|}
			\hline
			$p$  & $\Norm{u-u_n}_{\dG}$ & Convergence rate &  $p$  & $\Norm{u-u_n}_{\dG}$ & Convergence rate  \\
			\hline
			$2$ & 8.61E+00  &   & $3$ &  2.57E+00  &      \\
			\hline
			$4$ & 6.39E-01  & 3.75  & $5$ & 1.01E+00  &   1.83  \\
			\hline
			$6$ & 4.90E-01  & 0.65  & $7$ & 7.48E-01  &   0.89   \\
			\hline
			$8$ & 4.33E-01  &  0.44  & $9$ & 6.37E-01  &    0.64   \\
            \hline
			$10$ & 3.91E-01  & 0.46  & $11$ & 5.71E-01 &   0.55   \\
			\hline
            $12$ & 3.56E-01  & 0.52  & $13$ & 5.24E-01  &   0.51   \\
			\hline
			$14$ &  3.31E-01  & 0.46  & $15$ & 4.88E-01&   0.50   \\
			\hline
			$16$ & 3.13E-01  & 0.42  & $17$ & 4.59E-01  &   0.49   \\
			\hline
			$18$ & 2.97E-01  & 0.47  & $19$ & 4.36E-01  &  0.48   \\
            \hline
			$20$ & 2.82E-01  & 0.48  & $21$ & 4.15E-01  & 0.48  \\
            \hline
			$22$ & 2.69E-01  & 0.49  & $23$ & 3.97E-01  &   0.49   \\
            \hline
			$24$ & 2.58E-01  & 0.49  & $25$ & 3.81E-01  &   0.49  \\
			\hline
		\end{tabular}
	\caption{$\p$-convergence for the exact solution~$u_3$.}
	\label{ex3:convergence rate for boundary of the domain ihomogeneous}
\end{table}


\subsection{Point singularity on the boundary for the L-shape domain} \label{subsection:numerics:L-shape}
On the L-shape domain,
we consider the exact solution
\[
u_4(x,y)=
u_4(r,\theta):=
r^{\frac53} \sin \left( \frac53 \theta \right),
\]
where~$(r,\theta)$ are the polar coordinates at the re-entrant corner $(0,0)$.
We consider the triangular mesh in Figure~\ref{figure:L-shape}
\begin{figure}[ht]
\centering
 \includegraphics[height=1.5in,width=1.5in]{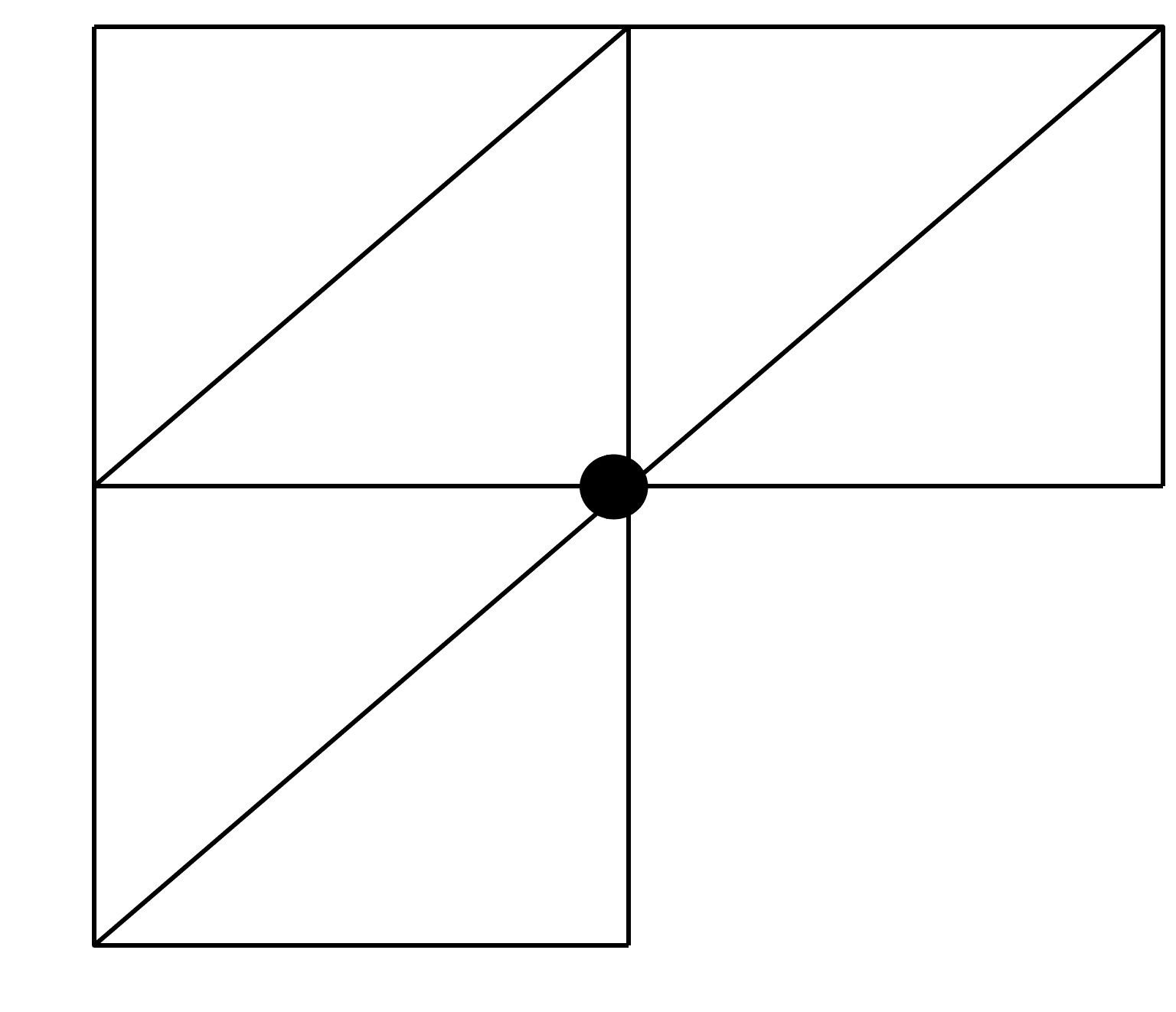}
 \caption{Uniform triangular mesh on the L-shape domain.}
 \label{figure:L-shape}
\end{figure}

The function~$u_4$ has a point singularity at~$(0,0)$
and belongs to~$H^{\frac83-\varepsilon}(\Omega)$ for any arbitrarily small~$\varepsilon>0$;
for this reason, optimal convergence estimates are not guaranteed by the foregoing results, as extra regularity on the solution is required.
Despite this,
we exhibit the usual doubling rate of convergence,
namely the decay rate for the DG-norm error is $\mathcal{O}(p^{\frac{4}{3}})$
as similarly observed for the square domain in Section~\ref{subsection:numerics:singular-boundary};
see Table~\ref{ex4:convergence-L-shape}.

\begin{table}[!htb]
	\centering
		\begin{tabular}{|c|c|c||c|c|c|}
			\hline
			$p$  & $\Norm{u-u_n}_{\dG}$ & Convergence rate &  $p$  & $\Norm{u-u_n}_{\dG}$ & Convergence rate  \\
			\hline
			$2$ & 1.80E+00  &   & $3$ &  9.38E-01  &      \\
			\hline
			$4$ & 6.93E-01  &  1.82  & $5$ & 5.50E-01  &    1.35  \\
			\hline
			$6$ & 4.53E-01  & 1.27  & $7$ & 3.84E-01  &   1.26   \\
			\hline
			$8$ & 3.32E-01  &  1.25  & $9$ & 2.91E-01  &    1.25   \\
            \hline
			$10$ & 2.58E-01 & 1.26  & $11$ & 2.32E-01&   1.26   \\
			\hline
            $12$ &  2.09E-01  & 1.27  & $13$ & 1.91E-01  &   1.27   \\
			\hline
			$14$ &  1.75E-01 & 1.27  & $15$ & 1.62E-01&   1.28   \\
			\hline
			$16$ & 1.50E-01  & 1.28  & $17$ &1.39E-01&   1.29   \\
			\hline
			$18$ & 1.30E-01  & 1.29  & $19$ & 1.22E-01 &  1.28   \\
            \hline
			$20$ & 1.15E-01  & 1.29  & $21$ & 1.08E-01  & 1.30  \\
            \hline
			$22$ & 1.02E-01  & 1.31  & $23$ & 9.60E-02  &   1.31   \\
            \hline
			$24$ & 9.22E-02   & 1.31  & $25$ & 8.77E-02   &   1.32   \\
			\hline
		\end{tabular}
	\caption{$\p$-convergence for the exact solution~$u_1$.}
	\label{ex4:convergence-L-shape}
\end{table}

\section{Conclusions} \label{section:conclusions}
We proved that the IPDG for the biharmonic problem
is $\h\p$-optimal if essential boundary conditions are homogeneous,
and triangular and tensor product-type meshes (in two and three dimensions) are employed.
The analysis hinges upon proving the existence
of a global $H^2$ piecewise polynomial over the given meshes
with $\h\p$-optimal approximation properties.
The results discussed for the biharmonic problem
were extended to other methods,
e.g., $\mathcal C^0$-IPDG, and problems,
e.g., the stream formulation of the Stokes problem.
The theoretical results were validated on several test cases,
which also showed $\p$-suboptimality for solutions with singular essential boundary conditions.

\paragraph*{Competing Interests}
The authors have no relevant financial or non-financial interests to disclose.

\paragraph*{Data Availability.}
The datasets generated during and/or analysed during the current study are available on request.

{\footnotesize \bibliography{bibliogr}} \bibliographystyle{plain}

\appendix

\section{Proof of Lemma~\ref{lemma:H2-1D}} \label{appendix:lemma:H2-1D}

For the sake of presentation, throughout the proof, we shall write~$\up$ instead of~$\Pcalpo u$.

We set~$\up''$ as the Legendre expansion of~$u''$ truncated at order~$\p-2$.
In other words, expanding~$u''$ as a series of Legendre polynomials~$L_j(\xi)$, $j \in \Nbb_0$,
\begin{equation} \label{expanding:u-second}
u''(\xi) = \sum_{i=0}^{+\infty} b_j L_j(\xi),
\end{equation}
we define
\begin{equation} \label{writing:u:1}
\up''(\xi) := \sum_{i=0}^{\p-2} b_j L_j(\xi).
\end{equation}
Standard properties of Legendre polynomials,
see, e.g., \cite[Appendix~C]{Schwab:1997}, imply
\begin{equation} \label{equation:Legendre-properties}
    b_j = \frac{2j+1}{2} \int_{-1}^1 u''(\xi) L_j(\xi) d\xi.
\end{equation}
For~$k\ge0$, we recall~\cite[Lemma~$3.10$]{Schwab:1997} that
\begin{equation} \label{Vk-norm-identity}
\sum_{i=k}^{+\infty} \frac{2}{2i+1} \frac{(i+k)!}{(i-k)!} \vert b_i \vert^2
= \int_{\Ihat} (1-\xi^2)^k \vert u^{(k+2)}(\xi) \vert^2 d\xi
\le \Norm{u^{(k+2)}}_{0,\Ihat}^2.
\end{equation}
Using the orthogonality properties of Legendre polynomials~\cite[eq. (C.24)]{Schwab:1997},
the fact that~$s \le \p-2$,
and~\eqref{Vk-norm-identity},
we obtain
\[
\begin{split}
\Norm{u''-\up''}_{0,\Ihat}^2
& = \sum_{i=\p-1}^{+\infty} \frac{2}{2i+1} \vert b_i \vert^2
  = \sum_{i=\p-1}^{+\infty} \frac{2}{2i+1} \frac{(i+s)!}{(i-s)!} \vert b_i \vert^2 \frac{(i-s)!}{(i+s)!}\\
& \le \frac{(\p-s-1)!}{(\p+s-1)!} \sum_{i=s}^{+\infty} \frac{2}{2i+1} \frac{(i+s)!}{(i-s)!} \vert b_i \vert^2
\le \frac{(\p-s-1)!}{(\p+s-1)!} \Norm{u^{(s+2)}}_{0,\Ihat}^2.
\end{split}
\]
The two above equations yield the first bound in~\eqref{1D:hp-approx}.

Next, we introduce
\begin{equation} \label{writing:u:2}
\up'(\xi) = \int_{-1}^\xi \up''(t) dt + u'(-1).
\end{equation}
We have~$\up'(-1) = u'(-1)$.
Moreover, recalling that~$\up''$ and~$u''$ have the same average over~$\Ihat$, we also have
\[
\up'(1)
= \int_{-1}^1 \up''(t) dt + u'(-1)
= \int_{-1}^1 u''(t) dt + u'(-1)
= u'(1),
\]
which proves~\eqref{continuity:function&derivative} for the derivative of~$\up$ at the endpoints of~$\Ihat$.

At this point, we observe
\begin{equation} \label{difference-derivatives}
u'(\xi) - \up'(\xi)
= \int_{-1}^\xi (u''(t) - \up''(t)) dt
=  \sum_{i=\p-1}^{+\infty} b_i \int_{-1}^{\xi} L_i(t) dt
=: \sum_{i=\p-1}^{+\infty} b_i \phi_i(\xi).
\end{equation}
Recall the Legendre differential equation~\cite[eq. (C.2.3)]{Schwab:1997}
\[
((1-\xi^2)L_i'(\xi))' + i(i+1)L_i(\xi)=0
\qquad \forall i \in \Nbb_0.
\]
Integrating the above identity over~$(-1,\xi)$, $\xi \in (-1,1)$, yields
\begin{equation} \label{identity:phi-i}
\phi_i (\xi) = -\frac{1}{i(i+1)} (1-\xi^2) L_i'(\xi).
\end{equation}
Recall the orthogonality property of the derivatives of Legendre polynomials~\cite[eq. (3.39)]{Schwab:1997}:
\begin{equation} \label{orthogonality:L-prime}
\int_{-1}^1 (1-\xi^2) L_i'(\xi) L_j'(\xi) d\xi
= \frac{2\delta_{i,j}}{2i+1} \frac{(i+1)!}{(i-1)!}.
\end{equation}
Combining~\eqref{identity:phi-i} and~\eqref{orthogonality:L-prime},
for all~$i$, $j \in \Nbb_0$, we deduce
\begin{equation} \label{phi-orthogonality}
\begin{split}
\int_{-1}^1 (1-\xi^2)^{-1} \phi_i(\xi) \phi_j(\xi) d\xi
& = \frac{1}{i(i+1)j(j+1)} \int_{-1}^1 (1-\xi^2) L_i'(\xi) L_j'(\xi) d\xi \\
& = \frac{1}{i^2(i+1)^2} \frac{2\delta_{i,j}}{2i+1} \frac{(i+1)!}{(i-1)!}
  = \frac{2\delta_{i,j}}{i(i+1)(2i+1)}.
\end{split}
\end{equation}
Using~$s \le \p-2$,
we write
\[
\begin{split}
& \int_{-1}^1 \vert u'(\xi) -\up'(\xi) \vert^2 d\xi
  = \int_{-1}^1 \vert \!\!\!\sum_{i=\p-1}^{+\infty} b_i \phi_i(\xi) \vert^2 d\xi
 \le \int_{-1}^1 (1-\xi^2)^{-1} \vert \!\!\! \sum_{i=\p-1}^{+\infty} b_i \phi_i(\xi) \vert^2 d\xi\\
& \overset{\eqref{phi-orthogonality}}{\le}
\sum_{i=\p-1}^{+\infty} \frac{2\vert b_i \vert^2}{i(i+1)(2i+1)}
  = \sum_{i=\p-1}^{+\infty}
    \left( \frac{2\vert b_i \vert^2}{2i+1} \frac{(i+s)!}{(i-s)!} \right)  \frac{(i-s)!}{(i+s)!} \frac{1}{i(i+1)}\\
& \le \frac{(\p-s-1)!}{(\p+s-1)!} \frac{1}{(\p-1)\p}
     \left( \sum_{i=s}^{+\infty} \frac{2\vert b_i \vert^2}{2i+1} \frac{(i+s)!}{(i-s)!}  \right)
  \overset{\eqref{Vk-norm-identity}}{\le}
  \frac{(\p-s-1)!}{(\p+s-1)!} \frac{1}{(\p-1)\p} \Norm{u^{(s+2)}}_{0,\Ihat}^2,
\end{split}
\]
which is the second bound in~\eqref{1D:hp-approx}.

Finally, we introduce
\begin{equation} \label{writing:u:3}
\up(\xi) = \int_{-1}^\xi \up'(t) dt + u(-1).
\end{equation}
We observe that~$\up(-1) = u(-1)$.
Since~$L_1'(t)=1=L_0(t)$, standard manipulations imply
\[
\begin{split}
\up(1)-\up(-1)
& = \int_{-1}^1 \up'(t)dt
  \overset{\eqref{writing:u:2}}{=}
   2 u'(-1) + \int_{-1}^1 \left(\int_{-1}^t \up''(x)dx \right) dt \\
& \overset{\eqref{expanding:u-second},\eqref{difference-derivatives}}{=}
    2 u'(-1) + \int_{-1}^1 \sum_{i=0}^{\p-2} b_i \phi_i(t) dt
  = 2 u'(-1) + \sum_{i=0}^{\p-2} b_i \int_{-1}^1 \phi_i(t) dt \\
& \overset{\eqref{identity:phi-i}}{=} 2 u'(-1) - \sum_{i=0}^{\p-1} b_i \int_{-1}^1 \frac{1}{i(i+1)} (1-t^2) L_i'(t) L_1'(t) dt\\
& \overset{\eqref{orthogonality:L-prime}}{=} 2 u'(-1) - \sum_{i=0}^{+\infty} b_i \int_{-1}^1 \frac{1}{i(i+1)} (1-t^2) L_i'(t) L_1'(t) dt\\
& \overset{\eqref{identity:phi-i},\eqref{expanding:u-second}}{=} 2 u'(-1) + \int_{-1}^1 \left( \int_{-1}^t u''(x)dx  \right)dt
  = \int_{-1}^1 u'(t) dt = u(1)-u(-1).
\end{split}
\]
Using that~$\up(-1) = u(-1)$, we deduce~$\up(1)=u(1)$.

We are left with proving error estimates in the $L^2$ norm.
To this aim, observe that
\[
u(\xi)-\up(\xi)
= \int_{-1}^\xi (u'(t) - \up'(t)) dt.
\]
We arrive at
\begin{equation} \label{expansion:psi}
u(\xi)-\up(\xi)
\overset{\eqref{difference-derivatives}}{=}
 \sum_{i=\p-1}^{+\infty} b_i
  \int_{-1}^{\xi} \phi_i(x) \ dx \ dt
=: \sum_{i=\p-1}^{+\infty} b_i \psi_i(\xi).
\end{equation}
We prove certain orthogonality properties of the $\psi_i$ functions.
Recall the identity~\cite[eq. (C.2.5)]{Schwab:1997}:
\[
L_i(\xi)
= \frac{L'_{i+1}(\xi) - L'_{i-1}(\xi)}{2i+1} \qquad \forall i\ge 2,
\qquad \qquad
L_0(\xi)=L_1'(\xi).
\]
Integrating over~$(-1,t)$, $t\in(-1,1)$,
and using~$L_{i+1}(-1)=L_{i-1}(-1)$,
see~\cite[eq. (C.2.6)]{Schwab:1997},
we deduce
\[
\phi_i(t)= \frac{L_{i+1}(\xi) - L_{i-1}(\xi)}{2i+1}.
\]
Upon integrating the above identity over~$(-1,\xi)$, $\xi\in(-1,1)$, we arrive at
\[
\psi_i(\xi)
= \frac{\int_{-1}^\xi L_{i+1}(t)\ dt
                - \int_{-1}^\xi L_{i-1}(t) \ dt}{2i+1}
\overset{\eqref{difference-derivatives}}{=}
\frac{\phi_{i+1}(\xi) - \phi_{i-1}(t)}{2i+1}.
\]
With the notation~$\phi_{-1}(\xi)=0$,
we get
\[
\begin{split}
& \psi_i(\xi) \psi_j(\xi)\\
& = \frac{1}{(2i+1)(2j+1)}
\left(
\phi_{i+1}(\xi) \phi_{j+1}(\xi)
+ \phi_{i-1}(\xi) \phi_{j-1}(\xi)
- \phi_{i+1}(\xi) \phi_{j-1}(\xi)
- \phi_{i-1}(\xi) \phi_{j+1}(\xi)
\right).
\end{split}
\]
In~\eqref{phi-orthogonality}, we proved that the~$\phi_i$ functions are orthogonal with respect to the $(1-\xi^2)$-weighted $L^2$ inner product.
Therefore, testing the above identity by~$(1-\xi^2)^{-1}$
and integrating over~$(-1,1)$ we arrive at
\begin{equation} \label{psi-orthogonality:tridiagonal}
\int_{-1}^1 (1-\xi^2)^{-1} \psi_i(\xi) \psi_j(\xi) \ d\xi
= \frac{1}{(2i+1)(2j+1)}
\left( \aleph_{i+1,j+1}
+ \aleph_{i-1,j-1}
- \aleph_{i+1,j-1}
- \aleph_{i-1,j+1} \right),
\end{equation}
where, from~\eqref{phi-orthogonality},
\[
\aleph_{\ell, k}
=
\begin{cases}
0 & \text{if } \ell\le 0 \text{ or } k\le 0,\\
\frac{2\delta_{\ell,k}}{\ell(\ell+1)(2\ell+1)} & \text{if } \ell, \ k>0.
\end{cases}
\]
We provide explicit values for the expression in~\eqref{psi-orthogonality:tridiagonal}.
If~$j \in \{ i-2, i, i+2 \}$, then
\[
\int_{-1}^1 (1-\xi^2)^{-1} \psi_i(\xi) \psi_j(\xi) \ d\xi = 0.
\]
If~$j=i$, then
\[
\int_{-1}^1 (1-\xi^2)^{-1} \psi_i(\xi) \psi_i(\xi) \ d\xi
=
\beth^i_1 + \beth^i_2,
\]
where
\begin{equation} \label{beth-1-2}
\beth_1^i :=
\frac{1}{(2i+1)^2}  \frac{2}{(i+1)(i+2)(2i+3)}  \quad \forall i \in \Nbb;
\qquad
\beth_2^i:=
\begin{cases}
0                                           & \text{if } i=1, \\
\frac{1}{(2i+1)^2} \frac{2}{(i-1) i (2i-1)} & \text{if } i\ge 2.
\end{cases}
\end{equation}
If~$j=i+2$, then
\begin{equation} \label{beth-3}
\int_{-1}^1 (1-\xi^2)^{-1} \psi_i(\xi) \psi_{i+2}(\xi) \ d\xi
= - \frac{1}{(2i+1)(2i+5)} \cdot
  \frac{2}{(i+1)(i+2)(2i+3)}
=: -\beth^i_3.
\end{equation}
If~$j=i-2$, then
\begin{equation} \label{beth-4}
\int_{-1}^1 (1-\xi^2)^{-1} \psi_i(\xi) \psi_{i-2}(\xi) \ d\xi
= - \beth^i_4 :=
\begin{cases}
- \frac{1}{(2i+1)(2i-3)}\cdot
  \frac{2}{(i-1) i (2i-1)}  & \text{if } i\ge3,\\
0 & \text{if } i\le2.
\end{cases}
\end{equation}
In light of the above orthogonality properties,
we can estimate the $L^2$ approximation error as follows:
\[
\begin{split}
\Norm{u-\up}^2_{0,\Ihat}
& \overset{\eqref{expansion:psi}}{=}
    \Norm{\sum_{i=\p-1}^{{+\infty}} b_i \psi_i}^2_{0,\Ihat}
  \le \sum_{i,j=\p-1}^{+\infty}
    \SemiNorm{b_i \ b_j \int_{-1}^1 (1-\xi^2)^{-1} \psi_i(\xi) \psi_j(\xi) d\xi }\\
& \overset{\eqref{beth-1-2}, \eqref{beth-3}, \eqref{beth-4}}{\le}
    \sum_{i=\p-1}^{+\infty} \vert b_i \vert^2
            \left( \beth^i_1 + \beth^i_2 + \beth^i_3 + \beth^i_4  \right).
\end{split}
\]
We cope with the four terms on the right-hand side separately.
We begin with~$\beth^i_1$:
\[
\begin{split}
\sum_{i=\p-1}^{+\infty} \vert b_i \vert^2 \beth^i_1
& \overset{\eqref{beth-1-2}}{=}
    \sum_{i=\p-1}^{+\infty} \vert b_i \vert^2
        \frac{1}{(2i+1)^2} \cdot
        \frac{2}{(i+1)(i+2)(2i+3)}\\
& = \sum_{i=\p-1}^{+\infty}
    \left( \frac{2}{2i+1} \vert b_i \vert^2 \frac{(i+s)!}{(i-s)!}   \right)
    \left( \frac{(i-s)!}{(i+s)!} \frac{1}{(i+1)(i+2)(2i+1)(2i+3)}  \right)\\
&  \overset{\eqref{Vk-norm-identity}}{\le}
\frac{(\p-s-1)!}{(\p+s-1)!} \frac{1}{\p(\p+1)(2\p-1)(2\p+1)} \Norm{u^{(s+2)}}_{0,\Ihat}^2.
\end{split}
\]
Next, we focus on~$\beth^i_2$, $i\ge2$:
\[
\begin{split}
\sum_{i=\max(2,\p-1)}^{+\infty} \vert b_i \vert^2 \beth^i_2
& \overset{\eqref{beth-1-2}}{=}
\sum_{i=\max(2,\p-1)}^{+\infty} \vert b_i \vert^2
        \frac{1}{(2i+1)^2} \cdot
        \frac{2}{(i-1) i (2i-1)}\\
& = \sum_{i=\max(2,\p-1)}^{+\infty}
    \left( \frac{2}{2i+1} \vert b_i \vert^2 \frac{(i+s)!}{(i-s)!}   \right)
    \left( \frac{(i-s)!}{(i+s)!} \frac{1}{(i-1) i (2i-1)(2i+1)}  \right)\\
& \overset{\eqref{Vk-norm-identity}}{\le}
\frac{(\p-s-1)!}{(\p+s-1)!} \frac{1}{(\p-2)(\p-1)(2\p-3)(2\p-1)} \Norm{u^{(s+2)}}_{0,\Ihat}^2.
\end{split}
\]
As for the term~$\beth^i_3$, we proceed as follows:
\[
\begin{split}
\sum_{i=\p-1}^{+\infty} \vert b_i \vert^2 \beth^i_3
& \overset{\eqref{beth-3}}{=}
\sum_{i=\p-1}^{+\infty} \vert b_i \vert^2
        \frac{1}{(2i+1)(2i+5)} \cdot
        \frac{2}{(i+1)(i+2)(2i+3)}\\
& = \sum_{i=\p-1}^{+\infty}
    \left( \frac{2}{2i+1} \vert b_i \vert^2 \frac{(i+s)!}{(i-s)!}   \right)
    \left( \frac{(i-s)!}{(i+s)!} \frac{1}{(i+1)(i+2)(2i+3)(2i+5)}  \right)\\
& \overset{\eqref{Vk-norm-identity}}{\le}
\frac{(\p-s-1)!}{(\p+s-1)!} \frac{1}{\p(\p+2)(2\p+1)(2\p+3)} \Norm{u^{(s+2)}}_{0,\Ihat}^2.
\end{split}
\]
Eventually, we cope with the term~$\beth^i_4$, $i\ge3$:
\[
\begin{split}
\sum_{i=\max(3,\p-1)}^{+\infty} \vert b_i \vert^2 \beth^i_4
& \overset{\eqref{beth-4}}{=}
\sum_{i=\max(3,\p-1)}^{+\infty} \vert b_i \vert^2
        \frac{1}{(2i+1)(2i-3)} \cdot
        \frac{2}{(i-1) i (2i-1)}\\
& = \sum_{i=\p-1}^{+\infty}
    \left( \frac{2}{2i+1} \vert b_i \vert^2 \frac{(i+s)!}{(i-s)!}   \right)
    \left( \frac{(i-s)!}{(i+s)!} \frac{1}{(i-1) i (2i-3)(2i-1)} \right)\\
& \le \frac{(\p-s-1)!}{(\p+s-1)!} \frac{1}{(\p-2)(\p-1)(2\p-5)(2\p-3)} \Norm{u^{(s+2)}}_{0,\Ihat}^2.
\end{split}
\]
Collecting the five bounds above concludes the proof.

\section{Proof of Theorem~\ref{theorem:H2-2D/3D} (2D case)} \label{appendix:theorem:H2-2D}
The continuity properties~\eqref{continuity:2D} follow from the definition of the operator~$\Pcalp \cdot$.
Therefore, we only focus on the  error estimates.
We split the proof into several steps.
Recall that~$\Pcalpx$ and~$\Pcalpy$ are the projections
in Lemma~\ref{lemma:H2-1D}
along the~$x$ and~$y$ directions, respectively.

\paragraph*{Some identities.}
The following identities are valid:
\begin{equation} \label{essential-commutative-property:2D}
\partialy \Pcalpx u
= \Pcalpx (\partialy u),
\qquad
\partialx \Pcalpy u
= \Pcalpy (\partialx u).
\end{equation}
We only show the first one as the second can be proven analogously.
For all~$y \in (-1,1)$,
after writing~$\Pcalpx u (x,y)$ in integral form with respect to the~$x$ variable,
we have
\[
\begin{split}
& \Pcalpx u ( x,y )
 = \int_{-1}^x \left[ \left( \int_{-1}^t \partialx^2 \Pcalpx u (x,y) dx \right)  + \partialx u(-1,y) \right] dt + u(-1,y)\\
& \overset{\eqref{continuity:function&derivative}}{=}
    \int_{-1}^x \left[ \left( \int_{-1}^t \partialx^2 \Pcalpx u (x,y) dx \right)  + \partialx u(-1,y) \right] dt + u(-1,y)\\
& \overset{\eqref{writing:u:1},\eqref{equation:Legendre-properties}}{=}
    \int_{-1}^x \left[ \left( \int_{-1}^t \partialx^2
    \left(\sum_{i=0}^{\p-2} \frac{2i+1}{2} \int_{-1}^1 u(s,y) L_i(s) ds \right) L_i(x)
    dx \right)  + \partialx u(-1,y) \right] dt + u(-1,y).
\end{split}
\]
Taking the partial derivative with respect to~$y$ on both sides yields
\[
\begin{split}
& \partialy \Pcalpx u ( x,y )\\
& =  \int_{-1}^x \left[ \left( \int_{-1}^t \partialx^2
    \left(\sum_{i=0}^{\p-2} \frac{2i+1}{2} \int_{-1}^1 \partialy u(s,y) L_i(s) ds \right) L_i(x)
    dx \right)  + \partialx \partialy u(-1,y) \right] dt + \partialy u(-1,y)\\
& \overset{\eqref{writing:u:1},\eqref{writing:u:2},\eqref{writing:u:3}}{=}  \Pcalpx \partialy u ( x,y ).
\end{split}
\]

\paragraph*{$L^2$ estimates.}
Using the definition of~$\Pcalp$,
the triangle inequality,
the one dimensional approximation properties~\eqref{1D:hp-approx:cor},
the third stability property in~\eqref{stability:Pcalp},
and the identities~\eqref{essential-commutative-property:2D}
we write
\[
\begin{split}
\Norm{u-\Pcalp u}_{0,\Qhat}
& \le \Norm{u-\Pcalpx u}_{0,\Qhat} + \Norm{\Pcalpx(u-\Pcalpy u)}_{0,\Qhat}\\
& \lesssim \p^{-s-2} \Norm{\partialx^{s+2} u}_{0,\Qhat}
            + \Norm{u-\Pcalpy u}_{0,\Qhat} + \p^{-2} \Norm{\partialx^2 (u-\Pcalpy u)}_{0,\Qhat}\\
& = \p^{-s-2} \Norm{\partialx^{s+2} u}_{0,\Qhat}
            + \Norm{u-\Pcalpy u}_{0,\Qhat} + \p^{-2} \Norm{\partialx^2 u-\Pcalpy \partialx^2 u}_{0,\Qhat}.
\end{split}
\]
The assertion follows using again the one dimensional approximation properties~\eqref{1D:hp-approx:cor}.

\paragraph*{$H^1$ estimates.}
First, we cope with the bound on the derivative with respect to~$x$.
Adding and subtracting~$\Pcalpx u$, and using the triangle inequality yield
\begin{equation} \label{estimate:T1T2}
\begin{split}
\Norm{\partialx (u-\Pcalp u)}_{0,\Qhat}
& \le \Norm{\partialx (u-\Pcalpx u)}_{0,\Qhat}
      + \Norm{\partialx \Pcalpx(u- \Pcalp u)}_{0,\Qhat}
  =: T_1 + T_2.
\end{split}
\end{equation}
As for the term~$T_1$, we use the one dimensional approximation properties~\eqref{1D:hp-approx:cor} and get
\begin{equation} \label{estimate:T1}
T_1 \lesssim \p^{-s-1} \Norm{\partialx^{s+2} u}_{0,\Qhat}.
\end{equation}
As for the term~$T_2$, we use the second stability estimate in~\eqref{stability:Pcalp} and get
\[
T_2
\lesssim \Norm{\partialx (u-\Pcalpy u)}_{0,\Qhat}
             + \p^{-1} \Norm{\partialx^2 (u-\Pcalpy u)}_{0,\Qhat}.
\]
Thanks to the identities~\eqref{essential-commutative-property:2D}
and the one dimensional approximation properties~\eqref{1D:hp-approx:cor},
we can estimate the term~$T_2$ from above as follows:
\begin{equation} \label{estimate:T2}
T_2
 \lesssim \Norm{\partialx u - \Pcalpy \partialx u}_{0,\Qhat}
         + \p^{-1} \Norm{\partialx^2 u - \Pcalpy \partialx^2 u}_{0,\Qhat}
  \lesssim \p^{-s-1}
            \left( \Norm{\partialx \partialy^{s+1} u}_{0,\Qhat}
            + \Norm{\partialx^2 \partialy^{s} u}_{0,\Qhat} \right).
\end{equation}
Collecting the estimates~\eqref{estimate:T1} and~\eqref{estimate:T2} in~\eqref{estimate:T1T2}, we arrive at
\[
\Norm{\partialx (u-\Pcalp u)}_{0,\Qhat}
\lesssim \p^{-s-1} \left(
        \Norm{\partialx^{s+2}u}_{0,\Qhat}
        + \Norm{\partialx \partialy^{s+1}u}_{0,\Qhat}
        + \Norm{\partialx^2 \partialy^{s}u}_{0,\Qhat} \right).
\]
With similar arguments for the $y$ derivative term,
we deduce~\eqref{2D:hp-approx:H1}.

\paragraph*{$H^2$ estimates.}
We begin by showing an upper bound on the second derivative with respect to~$x$.
Using the triangle inequality,
the one dimensional approximation properties~\eqref{1D:hp-approx:cor},
the stability properties~\eqref{stability:Pcalp},
and the identities~\eqref{essential-commutative-property:2D},
we obtain
\[
\begin{split}
\Norm{\partialx^2 (u-\Pcalp u)}_{0,\Qhat}
& \le \Norm{\partialx^2 (u-\Pcalpx u)}_{0,\Qhat}
     + \Norm{\partialx^2 \Pcalpx (u-\Pcalpy u)}_{0,\Qhat}\\
& \lesssim \p^{-s} \Norm{\partialx^{s+2} u}_{0,\Qhat}
            + \Norm{\partialx^2 (u-\Pcalpy u)}_{0,\Qhat}
    = \p^{-s} \Norm{\partialx^{s+2} u}_{0,\Qhat}
            + \Norm{\partialx^2 u-\Pcalpy \partialx^2 u}_{0,\Qhat}\\
&  \lesssim \p^{-s} \left( \Norm{\partialx^{s+2} u}_{0,\Qhat}
        + \Norm{\partialx^2 \partialy^{s} u}_{0,\Qhat} \right).
\end{split}
\]
Analogously, we can prove
\[
\Norm{\partialy^2 (u-\Pcalp u)}_{0,\Qhat}
\lesssim \p^{-s} \left( \Norm{\partialy^{s+2} u}_{0,\Qhat}
        + \Norm{\partialx^{s} \partialy^2 u}_{0,\Qhat} \right).
\]
Eventually, we cope with the mixed derivative term.
Using the triangle inequality and the identities~\eqref{essential-commutative-property:2D},
we get
\begin{equation} \label{estimate:S1S2}
\begin{split}
\Norm{\partialx \partialy (u-\Pcalp u)}_{0,\Qhat}
& \le \Norm{\partialx \partialy (u-\Pcalpx u)}_{0,\Qhat}
      + \Norm{\partialx \partialy \Pcalpx (u-\Pcalpy u)}_{0,\Qhat}\\
& = \Norm{\partialx (\partialy u-\Pcalpx \partialy u)}_{0,\Qhat}
      + \Norm{\partialx  \Pcalpx \partialy (u-\Pcalpy u)}_{0,\Qhat}
      =: S_1 + S_2.
\end{split}
\end{equation}
We estimate the two terms~$S_1$ and~$S_2$ separately.
Using the one dimensional approximation properties~\eqref{1D:hp-approx:cor},
we can write
\begin{equation} \label{estimate:S1}
\begin{split}
S_1
& \lesssim \p^{-s} \Norm{\partialx^{s+1} \partialy u}_{0,\Qhat}.
\end{split}
\end{equation}
On the other hand,
using the one dimensional approximation properties~\eqref{1D:hp-approx:cor}
and the stability properties~\eqref{stability:Pcalp}, we get
\begin{equation} \label{estimate:S2}
\begin{split}
S_2
& \lesssim \Norm{\partialx\partialy (u-\Pcalpy y)}_{0,\Qhat}
            + \p^{-1} \Norm{\partialx^2 \partialy (u-\Pcalpy u)}_{0,\Qhat}\\
& = \Norm{\partialy (\partialx u-\Pcalpy \partialx y)}_{0,\Qhat}
    + \p^{-1} \Norm{\partialy (\partialx^2 u-\Pcalpy \partialx^2 u)}_{0,\Qhat}\\
& \lesssim \p^{-s}
    \left( \Norm{\partialx\partialy^{s+1}u}_{0,\Qhat}
    + \Norm{\partialx^2 \partialy^{s} u}_{0,\Qhat} \right).
\end{split}
\end{equation}
Collecting the estimates~\eqref{estimate:S1} and~\eqref{estimate:S2} in~\eqref{estimate:S1S2},
we arrive at
\[
\Norm{\partialx \partialy (u-\Pcalp u)}_{0,\Qhat}
\lesssim
\p^{-s}
    \left( \Norm{\partialx^{s+1} \partialy u}_{0,\Qhat}
    + \Norm{\partialx\partialy^{s+1}u}_{0,\Qhat}
    + \Norm{\partialx^2 \partialy^{s} u}_{0,\Qhat} \right).
\]
Combining the estimates on all second derivatives terms,
we obtain~\eqref{2D:hp-approx:H2}.

\section{Proof of Theorem~\ref{theorem:H2-2D/3D} (3D case)} \label{appendix:theorem:H2-3D}

The continuity properties follow exactly as in the two dimensional case.
Thus, we only show the details for the approximation properties.
We split the proof in several steps.
Recall that~$\Pcalpx$, $\Pcalpy$, and~$\Pcalpz$ are the projections
in Lemma~\ref{lemma:H2-1D}
along the~$x$, $y$, and~$z$ directions, respectively.

\paragraph*{Some identities.}
Analogous to their two dimensional counterparts in~\eqref{essential-commutative-property:2D},
we have the following identities:
\begin{equation} \label{essential-commutative-property:3D}
\begin{split}
& \partialy \Pcalpx u
= \Pcalpx (\partialy u),
\qquad
\partialx \Pcalpy u
= \Pcalpy (\partialx u),
\qquad
\partialy \Pcalpz u
= \Pcalpz (\partialy u),\\
& \partialz \Pcalpy u
= \Pcalpy (\partialz u),
\qquad
\partialx \Pcalpz u
= \Pcalpz (\partialx u),
\qquad
\partialz \Pcalpx u
= \Pcalpx (\partialz u).
\end{split}
\end{equation}

\paragraph*{$L^2$ estimates.}
The triangle inequality and the one dimensional approximation properties~\eqref{1D:hp-approx:cor} imply
\[
\begin{split}
\Norm{u-\Pcalp u}_{0,\Qhat}
&   \!\le\! \Norm{u-\Pcalpx u}_{0,\Qhat}
        + \Norm{\Pcalpx(u-\Pcalpy\Pcalpz u)}_{0,\Qhat}
    \!\lesssim\! \p^{-s-2}\Norm{\partialx^{s+2}u}_{0,\Qhat}
        \!\!\!+ \Norm{\Pcalpx(u-\Pcalpy\Pcalpz u)}_{0,\Qhat}.
\end{split}
\]
We focus on the second term.
To this aim, we use the stability properties~\eqref{stability:Pcalp},
the triangle inequality,
the one dimensional approximation properties~\eqref{1D:hp-approx:cor},
and the identities~\eqref{essential-commutative-property:3D},
and deduce
\[
\begin{split}
& \Norm{\Pcalpx(u-\Pcalpy\Pcalpz u)}_{0,\Qhat}
 \le \Norm{u - \Pcalpy \Pcalpz u}_{0,\Qhat}
      + \p^{-2} \Norm{\partialx^2 u - \Pcalpy \Pcalpz \partialx^2 u}_{0,\Qhat}\\
& \lesssim \Norm{u - \Pcalpy u}_{0,\Qhat}
          + \Norm{\Pcalpy(u - \Pcalpz u)}_{0,\Qhat}
          + \p^{-2} \Norm{\partialx^2 u - \Pcalpy \partialx^2 u}_{0,\Qhat}
          + \p^{-2} \Norm{\Pcalpy(\partialx^2 u - \Pcalpz \partialx^2 u)}_{0,\Qhat} \\
& \lesssim \p^{-s -2}  \Norm{\partialy^{s+2}u}_{0,\Qhat}
+ \Norm{u-\Pcalpz u}_{0,\Qhat}
+ \p^{-2} \Norm{\partialy^2 u - \Pcalpz \partialy^2 u}_{0,\Qhat}
+ \p^{-s-4} \Norm{\partialx^2\partialy^2\partialz^{s-2}u}_{0,\Qhat}\\
& \quad +\p^{-2} \Norm{\partialx^2u-\Pcalpz \partialx^2 u}_{0,\Qhat}
     + \p^{-4} \Norm{\partialx^2 \partialy^2 u - \Pcalpz \partialx^2 \partialy^2 u}_{0,\Qhat}.
\end{split}
\]
The $L^2$ estimates eventually follow from the one dimensional approximation properties~\eqref{1D:hp-approx:cor}.

\paragraph*{$H^1$ estimates.}
We show the details for the $x$ derivative, as the other two cases can be dealt with analogously.
The triangle inequality and the one dimensional approximation properties~\eqref{1D:hp-approx:cor} imply
\[
\begin{split}
\Norm{\partialx (u -\Pcalp u)}_{0,\Qhat}
& \lesssim \Norm{\partialx (u -\Pcalpx u)}_{0,\Qhat}
        + \Norm{\partialx \Pcalpx (u -\Pcalpy\Pcalpz u)}_{0,\Qhat}\\
& \lesssim \p^{-s-1} \Norm{\partialx^{s+2}u}_{0,\Qhat}
        + \Norm{\partialx \Pcalpx (u -\Pcalpy\Pcalpz u)}_{0,\Qhat}.
\end{split}
\]
We focus on the second term on the right-hand side.
The stability properties~\eqref{stability:Pcalp}
and the identities~\eqref{essential-commutative-property:3D} entail
\[
\Norm{\partialx \Pcalpx (u -\Pcalpy\Pcalpz u)}_{0,\Qhat}
\lesssim \Norm{\partialx u - \Pcalpy\Pcalpz \partialx u}_{0,\Qhat}
        + \p^{-2} \Norm{\partialx^2 u - \Pcalpy\Pcalpz \partialx^2 u}_{0,\Qhat}
=: T_1+T_2.
\]
As for the term~$T_1$,
the triangle inequality,
the identities~\eqref{essential-commutative-property:3D},
and the one dimensional approximation properties~\eqref{1D:hp-approx:cor} imply
\[
\begin{split}
T_1
& \!\lesssim\! \Norm{\partialx u \!-\! \Pcalpy \partialx u}_{0,\Qhat}
       + \Norm{\Pcalpy(\partialx u \!-\! \Pcalpz \partialx u)}_{0,\Qhat}
\!\lesssim\! \p^{-s-1} \Norm{\partialx \partialy^{s+1} u }_{0,\Qhat}
       + \Norm{\Pcalpy(\partialx u \!-\! \Pcalpz \partialx u)}_{0,\Qhat}.
\end{split}
\]
The second term on the right-hand side can be estimated
using the stability properties~\eqref{stability:Pcalp},
the identities~\eqref{essential-commutative-property:3D},
and the one dimensional approximation properties~\eqref{1D:hp-approx:cor}:
\[
\begin{split}
& \Norm{\Pcalpy(\partialx u \!-\! \Pcalpz \partialx u)}_{0,\Qhat}
\lesssim \Norm{\partialx u \!-\! \Pcalpz \partialx u}_{0,\Qhat}
          + \p^{-2} \Norm{\partialx\partialy^2 u \!-\! \Pcalpz \partialx \partialy^2 u)}_{0,\Qhat}\\
& \lesssim \p^{-s-1}
        \left( \Norm{\partialx \partialz^{s+1}u}_{0,\Qhat}
    +\Norm{\partialx\partialy^2 \partialx^{s-1}u}_{0,\Qhat} \right).
\end{split}
\]
With similar arguments based on substituting $\partialx u$ by~$\partialx^2 u$, we find an upper bound for~$T_2$:
\[
T_2
\lesssim \p^{-s-1}
        \left( \Norm{\partialx^2 \partialy^{s}u}_{0,\Qhat}
        + \Norm{\partialx^2 \partialz^{s}u}_{0,\Qhat}
        + \Norm{\partialx^2\partialy^2 \partialz^{s-2}u}_{0,\Qhat} \right).
\]
Collecting the above estimates, we arrive at
\[
\begin{split}
& \Norm{\partialx (u -\Pcalp u)}_{0,\Qhat}
\lesssim \p^{-s-1} 
\Big(
 \Norm{\partialx^{s+2} u}_{0,\Qhat}
 + \Norm{\partialx \partialy^{s+1} u}_{0,\Qhat}
 + \Norm{\partialx \partialz^{s+1} u}_{0,\Qhat} \\
& \quad
 + \Norm{\partialx \partialy^2 \partialz^{s-1} u}_{0,\Qhat}
 + \Norm{\partialx^2 \partialy^s u}_{0,\Qhat}
 + \Norm{\partialx^2 \partialz^{s} u}_{0,\Qhat}
 + \Norm{\partialx^2 \partialy^2 \partialz^{s-2} u}_{0,\Qhat}
 \Big).
\end{split}
\]
By similar arguments on the $y$ and~$x$ partial derivatives, we deduce~\eqref{3D:hp-approx:H1}.

\paragraph*{$H^2$ estimates.}
First, we show the details for the second $x$ derivative, since the second $y$ and~$z$ derivatives can be dealt with analogously.
The triangle inequality,
the one dimensional approximation properties~\eqref{1D:hp-approx:cor},
and the identities~\eqref{essential-commutative-property:3D} imply
\[
\begin{split}
\Norm{\partialx^2 (u - \Pcalp u)}_{0,\Qhat}
& \le \Norm{\partialx^2 (u - \Pcalpx u)}_{0,\Qhat}
      + \Norm{\partialx^2 \Pcalpx (u - \Pcalpy\Pcalpz u)}_{0,\Qhat}\\
& \lesssim \p^{-s} \Norm{\partialx^{s+2}u}_{0,\Qhat}
            + \Norm{\partialx^2 u - \Pcalpy\Pcalpz \partialx^2 u}_{0,\Qhat}.
\end{split}
\]
We are left with estimating the second term on the right-hand side.
Applying a further triangle inequality,
the one dimensional approximation properties~\eqref{1D:hp-approx:cor},
and the stability properties~\eqref{stability:Pcalp}, we arrive at
\[
\begin{split}
\Norm{\partialx^2 u - \Pcalpy\Pcalpz \partialx^2 u}_{0,\Qhat}
& \le   \Norm{\partialx^2 u - \Pcalpy \partialx^2 u}_{0,\Qhat}
        + \Norm{\Pcalpy(\partialx^2 u - \Pcalpz \partialx^2 u)}_{0,\Qhat}\\
& \lesssim \p^{-s} \Norm{\partialx^2 \partialy^{s}u}_{0,\Qhat}
            + \Norm{\partialx^2 u - \Pcalpz \partialx^2 u}_{0,\Qhat}
            + \p^{-2} \Norm{\partialx^2 \partialy^2 u - \Pcalpz\partialx^2\partialy^2 u}_{0,\Qhat}\\
& \lesssim \p^{-s} \left(
            \Norm{\partialx^2 \partialy^{s}u}_{0,\Qhat}
            + \Norm{\partialx^2 \partialz^{s}u}_{0,\Qhat}
            + \Norm{\partialx^2 \partialy^2 \partialz^{s-2}u}_{0,\Qhat}
            \right).
\end{split}
\]
Collecting the two above estimate above gives
\begin{equation} \label{second-derivatives:estimate:3D}
\Norm{\partialx^2 (u - \Pcalp u)}_{0,\Qhat}
 \lesssim \p^{-s} \left(
            \Norm{\partialx^{s+2}u}_{0,\Qhat}
            + \Norm{\partialx^2 \partialy^{s}u}_{0,\Qhat}
            + \Norm{\partialx^2 \partialz^{s}u}_{0,\Qhat}
            + \Norm{\partialx^2 \partialy^2 \partialz^{s-2}u}_{0,\Qhat}
            \right).
\end{equation}

Next, we focus on the mixed $xy$ derivative and observe that the~$yz$ and~$xz$ counterparts are dealt with analogously.
Using the triangle inequality,
the one dimensional approximation properties~\eqref{1D:hp-approx:cor},
the identities~\eqref{essential-commutative-property:3D},
and the stability properties~\eqref{stability:Pcalp} leads us to
\[
\begin{split}
& \Norm{\partialx\partialy u - \Pcalp \partialx\partialy u}_{0,\Qhat}
 \le  \Norm{\partialx\partialy u - \Pcalpx \partialx\partialy u}_{0,\Qhat}
        + \Norm{\Pcalpx (\partialx\partialy u - \Pcalpy\Pcalpz \partialx\partialy u)}_{0,\Qhat}\\
& \lesssim \p^{-s} \Norm{\partialx^{s+1}\partialy u}_{0,\Qhat}
    + \Norm{\partialy \partialx u - \partialy \Pcalpy\Pcalpz \partialx u}_{0,\Qhat}
    + \p^{-1} \Norm{\partialy \partialx^2 u - \partialy \Pcalpy\Pcalpz \partialx^2 u}_{0,\Qhat}\\
& = \p^{-s} \Norm{\partialx^{s+1}\partialy u}_{0,\Qhat} + T_3+T_4.
\end{split}
\]
We estimate the terms~$A$ and~$B$ separately.
First, we focus on~$A$.
Using the triangle inequality,
the one dimensional approximation properties~\eqref{1D:hp-approx:cor},
the identities~\eqref{essential-commutative-property:3D},
and the stability properties~\eqref{stability:Pcalp},
entail
\[
\begin{split}
T_3
& \le \Norm{\partialy \partialx u - \partialy \Pcalpy \partialx u}_{0,\Qhat}
      + \Norm{\partialy \Pcalpy \partialx u - \partialy \Pcalpy \partialx u}_{0,\Qhat}\\
& \lesssim \p^{-s} \Norm{\partialx \partialy^{s+1}u}_{0,\Qhat}
        + \Norm{\partialy\partialx u - \Pcalpz \partialy\partialx u}_{0,\Qhat}
        + \p^{-1} \Norm{\partialy^2\partialx u - \Pcalpz \partialy^2\partialx u}_{0,\Qhat}\\
& \lesssim \p^{-s}
    \left(
        \Norm{\partialx \partialy^{s+1}u}_{0,\Qhat}
        + \Norm{\partialx \partialy \partialz^{s}u}_{0,\Qhat}
        + \Norm{\partialx \partialy^2 \partialz^{s-1}u}_{0,\Qhat}
    \right).
\end{split}
\]
Next, we focus on the term~$B$.
Using the triangle inequality,
the one dimensional approximation properties~\eqref{1D:hp-approx:cor},
the identities~\eqref{essential-commutative-property:3D},
and the stability properties~\eqref{stability:Pcalp},
leads to
\[
\begin{split}
T_4
& \le \p^{-1} \Norm{\partialy \partialx^2 u - \partialy \Pcalpy \partialx^2 u}_{0,\Qhat}
      + \p^{-1} \Norm{\partialy \Pcalpy \partialx^2 u - \partialy \Pcalpy \Pcalpz \partialx^2 u}_{0,\Qhat}\\
& \lesssim \p^{-s} \Norm{\partialx^2\partialy^{s}}_{0,\Qhat}
        + \p^{-1} \Norm{\partialy \partialx^2 u - \Pcalpz \partialy \partialx^2 u}_{0,\Qhat}
        + \p^{-2} \Norm{\partialx^2 \partialy^2 u - \Pcalpz \partialx^2 \partialy^2 u}_{0,\Qhat}\\
& \lesssim \p^{-s}
        \left(
            \Norm{\partialx^2\partialy^{s}}_{0,\Qhat}
            + \Norm{\partialx^2\partialy \partialz^{s}}_{0,\Qhat}
            + \Norm{\partialx^2\partialy^2 \partialz^{s-2}}_{0,\Qhat}
        \right).
\end{split}
\]
Collecting the above estimates gives
\begin{equation} \label{mixed-derivatives:estimate:3D}
\begin{split}
\Norm{\partialx\partialy u - \Pcalp \partialx\partialy u}_{0,\Qhat}
\lesssim \p^{-s}
& \left( \Norm{\partialx^{s+1}\partialy u}_{0,\Qhat}
        + \Norm{\partialx \partialy^{s+1} u}_{0,\Qhat}
        + \Norm{\partialx \partialy \partialz^{s} u}_{0,\Qhat}
        \Norm{\partialx \partialy^2 \partialz^{s-1} u}_{0,\Qhat}
            \right.  \\
& \left. + \Norm{\partialx^2 \partialy^{s} u}_{0,\Qhat}
        + \Norm{\partialx^2 \partialy \partialz^{s-1} u}_{0,\Qhat}
        + \Norm{\partialx^2 \partialy^2 \partialz^{s-2} u}_{0,\Qhat}
        \right).
\end{split}
\end{equation}
Bound~\eqref{3D:hp-approx:H2}
follows combining the estimates on the second derivative term~\eqref{second-derivatives:estimate:3D},
the mixed derivative term~\eqref{mixed-derivatives:estimate:3D},
and the corresponding estimates for similar derivative terms.

\end{document}